\documentclass[11pt]{article}
\usepackage[english]{babel}
\usepackage[utf8]{inputenc}
\usepackage{mathtools,amsmath,amssymb,amsthm}
\usepackage{graphicx,multirow}
\usepackage{fancyhdr,tabularx}
\usepackage{lastpage}
\usepackage{mathrsfs}
\usepackage{color, float}
\usepackage{latexsym}
\usepackage{tensor}
\usepackage{tikz}
\usepackage{hyperref}
\usepackage{xparse}
\usepackage{textcomp}
\usepackage{ulem}
\usepackage[top = 1.0cm,bottom = 3cm,left = 2.5cm, right = 2.5cm]{geometry}
\usepackage{hyperref}
\usepackage{cancel,soul}

\newcommand{\degree}{\si{\degree}}

\def \hueco{\noalign{\medskip}}
\def \beq{\begin{equation}}
\def \eeq{\end{equation}}
\def \ba{\begin{array}}
\def \ea{\end{array}}
\def \dis{\displaystyle}
\def \n{\boldsymbol{n}}
\def \x{\boldsymbol{x}}
\def \m{\boldsymbol{m}}
\def \z{\boldsymbol{z}}
\def \J{\boldsymbol{J}}

\newtheorem{theorem}{Theorem}[]

\newtheorem{lemma}[theorem]{Lemma}
\newtheorem{corollary}[theorem]{Corollary}
\newtheorem{remark}[theorem]{Remark}

\numberwithin{theorem}{section}

\title{\sc{A new  optimal control algorithm for the Keller-Segel problem}}
\author{
F. Guill\'en-Gonz\'alez\thanks{EDAN and IMUS, Universidad de Sevilla, Seville (Spain). Email: guillen@us.es}, \quad
F. Palmero-Ramos\thanks{Department of Mathematics and Statistics, University of Massachusetts Amherst, Amherst MA (USA). Email: fpalmeroramo@umass.edu},\quad
M.A. Rodr\'iguez-Bellido\thanks{EDAN and IMUS, Universidad de Sevilla, Seville (Spain). Email: angeles@us.es}\quad
\&\quad G.~Tierra\thanks{Department of Mathematics, University of North Texas, Denton TX (USA). Email: gtierra@unt.edu}
}
\date{\today}

\begin{document}
\maketitle

\begin{abstract}
In this work we introduce a new optimal control algorithm for the Keller-Segel chemo-attraction system, where both boundary and distributed controls are considered and both are associated with introducing/removing the amount of chemical substances in the system. The key idea of our approach is to design the optimal control algorithm after discretizing the state problem system, which is done using an upwind finite volume scheme in space and a semi-implicit finite difference in time.
Then, the discrete optimal control is approximated identifying the gradient of the reduced discrete cost via the discrete adjoint scheme. Finally, to minimize the reduced cost functional, we use a gradient descent type method (Adam scheme). Moreover, several numerical results are presented to illustrate the efficiency of the proposed approach.
\end{abstract}


{\bf The Mathematics Subject Classification (MSC):  35Q92, 49M25, 49M41, 65K10, 92C17.} 

{\bf Keywords:} chemotaxis PDE, optimal control problem, distributed control, boundary control, Adam algorithm.

\section{Introduction}

Optimal control problems can be understood as PDE problems with control variables acting on the bulk and/or the boundary of the domain to optimize the behavior of the observed variables of the state problem with respect to some cost functional. This is a fascinating and complex topic due to its mathematical challenges  \cite{Lions,Troltzsch} as its applicability to a wide range of applications such as optimizing industrial processes or 
designing treatment for diseases, 
to name a few.

\

Chemotaxis effects refer to organisms or cells orienting/moving in relation to the distribution of chemical agents, being called \textit{attractive} if the movement is towards a higher concentration of the chemical substance and \textit{repulsive} the other way around. Models related with chemotaxis effects lead naturally to optimal control problems, in the sense that the chemical agent can be used as a control variable to induce a desired dynamic on the distribution of the living organisms. 

\

To provide a context for the ideas presented in this work, we first discuss related works available in the literature focusing on  optimal control  problems for chemotaxis systems.
 The authors in \cite{RY01} analyze a distributed (nonnegative) optimal control in the chemical agent for the $2D$ Keller-Segel problem with small enough  amount of initial cells, obtaining global in time solutions. This type of control only model the case where  chemoattractant is introduced.
Later on,   this contribution was extended in  \cite{R03,R04} and  similar techniques were  used in \cite{R08,R13} 
for the $1D$ Keller-Segel problem with  boundary  control on Neumann type  for the chemical agent. Moreover, in \cite{FMcC03} the authors provide a study of an optimal control problem for a $1D$ attraction system with a  production term using the Michaelis-Menten saturation factor, where they introduce two bilinear controls to represent either the amount of the cells or the proportion of the chemoattractant that is being  removed.  On the other hand, \cite{LiuYuan21} deals with the analysis of a distributed optimal control problem for a $2D$ attraction-repulsion chemotaxis system. 
Feldhordt in his PhD thesis \cite{Feldhordt}, 
analyze a boundary optimal control problem for a chemotaxis system in $\mathbb{R}^d$ ($d\geq2$)  with bounded chemotactic sensitivity function and propose numerical methods for finding a solution.

\

 In order to model the case where chemical signal can be introduced or removed it is usual to take bilinear controls.  For instance, the authors in \cite{Braz-KS-logistic} study existence of optimal control and a necessary optimality system for a bilinear optimal control problem associated to the $2D$ Keller-Segel system considering a logistic reaction term.
Other related works include \cite{Andre-KS-consumo,Andre-KS-consumo-weak}, where the authors study a bilinear optimal control problem associated to a $3D$ attraction-consumption model, providing the existence of a weak optimal solution in \cite{Andre-KS-consumo-weak}, while in \cite{Andre-KS-consumo} optimality conditions are obtained assuming the existence of  strong solutions. 

\

There are also works more oriented towards a specific application. For instance, the authors in \cite{LebiedzMaurer}, describe numerical optimal control of E. coli bacterial chemotaxis based on a 1D two-component PDE model  and they present a numerical scheme to force cell aggregation patterns by applying a Robin boundary  control of chemoattractant. Other examples are \cite{dAdM15}, where the authors study a distributed optimal control problem for a two-dimensional mathematical model of cancer invasion and \cite{TangYuan}, a paper that deals with a chemotaxis-haptotaxis model which described the process of cancer invasion on the macroscopic scale. 

Additionally, there are relevant works coupling chemotaxis models with hydrodynamics effects such as \cite{MADAEJ18}, where the authors analyze an optimal distributed control problem where the state equations are given by a stationary chemotaxis model coupled with the Navier-Stokes equations. Another interesting example is found in \cite{JEJ21}, a paper that study an optimal control problem associated to a 3D chemotaxis-Navier-Stokes model.

\

In recent times many works devoted to the numerical approximation of different chemotaxis PDE systems have appeared in the literature. In the following we refer to the ones most related to this work: In \cite{Saito} Saito presents an upwind Finite Element (FE) scheme that preserves positivity and mass conservation at the discrete level for a parabolic-elliptic Keller-Segel model. Then the work \cite{Saad-et-al-2014} focuses on developing an upwind Finite Volume (FV) scheme which preserves a discrete maximum principle for a degenerate Keller-Segel system. For the classical Keller-Segel model,  an upwind FV  scheme was proposed in \cite{Zhou-Saito}, which preserves mass conservation and positivity, deriving some inequalities on the discrete free energy. 
After that, the authors in \cite{JV-Rafa-KS-FE}  derive a FE approximation of Keller-Segel, proving a priori energy estimates and discrete positivity under constraints implying that the diffusion term dominates the chemotaxis effects. More recently, in \cite{Dani-Rafa-KS-DG-23}, the authors also focus on Keller-Segel but they consider an upwind Discontinuos Galerkin (DG) scheme which preserves positivity and the decreasing energy property. Additionally it is worth to mention the work presented in \cite{Diego-MA-19-FE-RegularizationEnergy}, whose focus is on a FE approach with a regularization of the energy, which allows the authors to preserve positivity approximately while assuring the decreasing energy property for a repulsion-production chemotaxis system. Interestingly, it is possible to consider similar ideas for an attraction-consumption chemotaxis problem and also preserve positivity approximately while assuring the decreasing energy property \cite{G-KS-consumo-24}.

\

In this work, we introduce a new optimal control algorithm for the Keller-Segel chemo-attraction system, where both boundary and distributed controls are considered and both are associated with introducing/removing the amount of chemical substances in the system. 
The main idea behind our approach is to design the optimal control algorithm after discretizing the state problem system (\textit{discretize-then-optimize} approach).  This discretization  is done using the positivity-preserving and mass conservative upwind FV scheme presented in \cite{Zhou-Saito}. 
Once the state scheme has been chosen, 
we compute exactly the discrete gradient of the reduced cost owing to  the  adjoint scheme associated to the sensibility  scheme. The discrete gradient will be used to approximate the discrete optimal control by an optimizer algorithm. In fact, we minimize the reduced cost functional using the Adam scheme \cite{KB15}, which is a very efficient gradient descent type method that allows us to perform numerical experiments in a reasonable amount of time. In order to be able to manage the computational cost of the algorithm with a desktop computer, we present our algorithm in $1D$ domains, but the ideas can be extended naturally to higher dimensions without loss of generality.

\

This work is organized as follows. Section~\ref{sec:model} is devoted to introduce the optimal control problem. In particular, we first present a general formulation of a chemotaxis  control problem and then we specify the particular problem that we will focus on, consisting on the case of a quadratic tracking cost with bilinear distributed control and boundary control, of either bilinear or Robin type.
Then in Section~\ref{sec:DiscOpt} we describe the \textit{discretize-then-optimize} approach that we are considering in this work, as well as the Adam minimization algorithm.
We present several numerical results in Section~\ref{sec:simulations} to illustrate the well behavior of the proposed algorithm and its applicability to simulate complex situations with different combinations of observation and control domains.
Finally, concluding remarks are provided in Section~\ref{sec:conclusions}.

\section{The optimal control problem}\label{sec:model}
In this section we describe the model that we are considering in this work. We start by introducing a general optimal control problem for a chemotaxis system, together with the derivation of the associated adjoint problem obtained by taking the homogeneous part of the linear problem satisfied by the derivative of the states variables $(u,v)$ with respect to the control variables $(f,g)$. After that we detail the particular problem that we are focusing on, by specifying the choice of the cost functional and the reaction terms considered in the system.

\subsection{General formulation of a chemotaxis optimal control problem}

Let $\Omega\subset \mathbb{R}^d$ ($d=1,2,3$) be a bounded spatial domain with polyhedral boundary $\partial\Omega$ and $[0,T]$ be a finite time interval.
We denote by $\Omega_o, \Omega_c \subset \Omega$ the corresponding observation and control domains, and $\partial\Omega_c\subset \partial\Omega$ represents the region of the boundary where the boundary control will have an effect on.

The domain distributed control is denoted by $f(t,\x): (0,T)\times \Omega_c\to \mathbb{R}$, the boundary control is represented by $g(t,\x):(0,T)\times \partial\Omega_c\to \mathbb{R}$.  The states variables associated to the density of living organisms (cells for instance)  and the concentration of chemical are denoted by $u(t,\x)$, $v(t,\x):(0,T)\times\Omega\to \mathbb{R}$, respectively. 

The cost functional is defined as the addition of a term tracking the state variables, another one measuring the final time states and two quadratic terms associated to both controls functions, the domain control and the boundary control:
\beq\label{eq:CostFunctional}
\ba{rcl}
J((u,v),(f,g))
&=&\dis
\frac{1}{T\, |\Omega_o|}\int_0^T\int_{\Omega_o} F(u,v) d\x dt 
+ \frac{1}{ |\Omega_o|}\int_{\Omega_o} G(u(T,\cdot),v(T,\cdot)) d\x
\\ \hueco
&+&\dis
 \frac{\alpha_f}2 \frac{1}{T\, |\Omega_c|} \int_0^T\int_{\Omega_c} f^2 d\x dt 
+ \frac{1}{T\, |\partial \Omega_c|}\frac{\alpha_g}2 \int_0^T\int_{\partial\Omega_c} g^2 d\sigma dt\,,
\ea
\eeq
where $\alpha_f\ge 0$ and $\alpha_g\ge 0$ denote the weights of the control terms.

The chemotaxis problem that we consider is the following, find $(u(t,\x),v(t,\x))$ such that:
\beq
\left\{\ba{rcl}
u_t + \nabla\cdot (-\nabla u + u \nabla v) 
&=&
R(u,v)\,,
\\ \hueco
v_t + \nabla\cdot (-\nabla v) 
&=&
S(u,v,f\, 1_{\Omega_c})\,,
\ea\right.
\eeq
with generic functionals $R$ and $S$ modeling reaction terms, where $S$ depends on the distributed control $f\, 1_{\Omega_c}$. By simplicity, we have taken the diffusion and convection coefficients equal to $1$. This system is complemented with initial conditions
\beq
u(0,\x)=u_0(\x)\,,\quad v(0,\x)=v_0(\x) ,\quad \x\in \Omega,
\eeq
and boundary conditions to represent that the cell variable is considered insulated inside the domain, while the flux of the chemical variable could be determined by a functional $P$ which depends on the boundary control $g\, 1 _{\partial\Omega_c}$ such that:
\beq
(-\nabla u + u \nabla v)\cdot \n \big|_{\partial\Omega}
=0
\quad
\mbox{ and }
\quad
(-\nabla v)\cdot \n \big|_{\partial\Omega}
=
-P(v,g\, 1 _{\partial\Omega_c})\,,
\eeq
where $\n$ denotes the outward normal vector to the boundary $\partial\Omega$. In particular, $P\ge 0$ represents chemical entering in the domain and $P\le 0$ means chemical going out through the boundary. 

\subsection{Linearized problems}\label{sec:subseclinearization}
Now we compute the linear problem satisfied by the derivative of the states variables  $(u(f,g),v(f,g))$ around a 
control pair $(f,g)$  in the direction associated with any pair of controls $( \bar f, \bar g)$. By introducing the  notation
$$
(U_f,V_f):=\left(\frac{\partial u(f,g)}{\partial f}(\bar f),\frac{\partial v(f,g)}{\partial f}(\bar f)\right)
\quad 
\hbox{ and }
\quad 
(U_g,V_g):=\left(\frac{\partial u(f,g)}{\partial g}(\bar g), \frac{\partial v(f,g)}{\partial g}(\bar g)\right),
$$
then, the pair $(U_f,V_f)$ solve the linear problem
\beq\label{eq:problemUfVf}
\left(
\ba{c}
U _t 
\\\hueco
V _t 
\ea
\right)
+ 
\nabla\cdot
\left(
\begin{array}{c}
   -\nabla U + u \nabla V+ U \nabla v \\ -\nabla V   
\end{array}
\right)
-
\left(
\begin{array}{cc}
  R_u(u,v) & R_v(u,v) \\ S_u(u,v,f) & S_v (u,v,f) 
\end{array}
\right)
\left(
\begin{array}{c}
  U  \\ V   
\end{array}
\right)
=
\left(
\begin{array}{c}
  0  \\ S_f(u,v,f \, 1_{\Omega_c}) \bar f   \, 1_{\Omega_c}
\end{array}
\right)\,,
\eeq
subject to the boundary and initial conditions 
$$
(-\nabla U + u \nabla V+ U \nabla v)\cdot \n \big|_{\partial\Omega}
\,= 0, \quad
(-\nabla V)\cdot \n \big|_{\partial\Omega}+P_v(v,g\,1_{\partial\Omega_c} )V\,1_{\partial\Omega_c} 
\,=\,0
$$
and
$$
U(0,\x)=0\,,
\quad 
V(0,\x)=0\,. 
$$
Moreover, the pair $(U_g,V_g)$ solve the linear problem
\beq  \label{eq:problemUgVg}
\left(
\begin{array}{c}
  U  \\ V   
\end{array}
\right)_t
+ \nabla\cdot
\left(
\begin{array}{c}
   -\nabla U + u \nabla V+ U \nabla v \\ -\nabla V   
\end{array}
\right)
-
\left(
\begin{array}{cc}
  R_u(u,v) & R_v(u,v) \\ S_u(u,v,f) & S_v (u,v,f) 
\end{array}
\right)
\left(
\begin{array}{c}
  U  \\ V   
\end{array}
\right)
=
\left(
\begin{array}{c}
  0  \\ 0
\end{array}
\right)\,,
\eeq
subject to the boundary conditions 
$$
(-\nabla U + u \nabla V+ U \nabla v)\cdot \n \big|_{\partial\Omega}=0,
\quad
(-\nabla V)\cdot \n \big|_{\partial\Omega}+P_v(v,g\, 1_{\partial\Omega_c} )V \, 1_{\partial\Omega_c}
= -P_g(v,g\,1_{\partial\Omega_c})\bar g\, 1_{\partial\Omega_c}  ,
$$
and the initial conditions
$$
U(0,\x)
\,=\, 0, \quad 
V(0,\x)
\,=\,0\,.
$$

\subsection{Adjoint problem}

The adjoint problem corresponding with the homogeneous part of the linearized problems \eqref{eq:problemUfVf} and  \eqref{eq:problemUgVg} consists on finding $(\varphi,\psi)$ such that
\beq\label{eq:AdjointPb}
\ba{c}\dis
- \left(
\begin{array}{c}
  \varphi  \\ \psi   
\end{array}
\right)_t
+ \nabla\cdot
\left(
\begin{array}{c}
   -\nabla \varphi  \\ -\nabla \psi  + u \nabla \varphi
\end{array}
\right)
- \left(
\begin{array}{c}
  \nabla v \cdot \nabla \varphi  \\ 0   
\end{array}
\right)
-
\left(
\begin{array}{cc}
  R_u(u,v) & S_u(u,v,f) \\  R_v(u,v) & S_v(u,v,f)  
\end{array}
\right)
\left(
\begin{array}{c}
  \varphi  \\ \psi   
\end{array}
\right)
\\ \hueco\dis
=
\frac{1}{T\, |\Omega_o|}\left(
\begin{array}{c}
  F_u(u,v)  \, 1_{\Omega_o} \\ F_v(u,v) \, 1_{\Omega_o}  
\end{array}
\right)\,, 
\ea
\eeq
subject to the boundary conditions
$$
-\nabla \varphi \cdot \n \big|_{\partial\Omega}
\,=0 , \quad 
(-\nabla \psi + u \nabla \varphi ) \cdot \n \big|_{\partial\Omega} + P_v(v,g) \, 
\psi \, 1_{\partial\Omega_c}\,=\,0\,,
$$
and the final time conditions:
$$
\varphi(T,\x)=\frac{1}{ |\Omega_o|}G_u(u(T,\x),v(T,\x)) \, 1_{\Omega_o}
\quad
\mbox{ and }
\quad 
\psi(T,\x)= \frac{1}{ |\Omega_o|}G_v(u(T,\x),v(T,\x))\, 1_{\Omega_o}\,. 
$$
By reformulating the problem via the so-called reduced cost functional 
$$
\widetilde J (f,g) = J\Big(\big(u(f,g),v(f,g)\big),(f,g)\Big)
$$
it is possible to identify the gradient of $\widetilde J$ using the 
Riesz's Identification Theorem  with respect to the inner product of $L^2\big((0,T)\times \Omega_c\big)\times L^2\big((0,T)\times \partial\Omega_c\big)$ by the following expression: 
$$
\widetilde J' (f,g) =
\left(
\begin{array}{c}
  \widetilde J_f (f,g)  \\
   \noalign{\vspace{-2ex}}\\
     \widetilde J_g (f,g)
\end{array}
\right)
=
\left(
\begin{array}{c}
  S_f(u,v,f) \, \psi \, 1_{\Omega_c}+  \frac{\alpha_f}{T\, |\Omega_c|}  f   \, 1_{\Omega_c}
   \\
   \noalign{\vspace{-2ex}}\\
  - P_g(v,g) \, \psi \, 1_{\partial\Omega_c} +  \frac{\alpha_g}{T\, |\partial\Omega_c|} g \, 1_{\partial\Omega_c}
\end{array}
\right) .
$$
 Consequently, when there are no constraints  on the controls, the first-order  optimality condition can be written as
$$
\widetilde J' (f,g) =
\left(
\begin{array}{c}
  S_f(u,v,f) \, \psi \, 1_{\Omega_c}+ \frac{\alpha_f}{T\, |\Omega_c|} f   \, 1_{\Omega_c}
   \\
   - P_g(v,g) \, \psi \, 1_{\partial\Omega_c} + \frac{\alpha_g}{T\, |\partial\Omega_c|} g \, 1_{\partial\Omega_c}
\end{array}
\right)
=
\left(
\begin{array}{c}
  0
   \\
 0
\end{array}
\right)\,,
$$
which depends on the state problem for $(u,v)$ and the adjoint one for $(\varphi,\psi)$. 
For the Robin boundary control  case,  the  constraint  $g\ge 0$ is imposed, and then the optimality condition $\widetilde J_g(f,g)=0$ must be changed by  
$$
\widetilde J_g(f,g)(\bar g -g)\ge 0\quad \hbox{for any} \quad \bar g\ge 0\,.
$$

 \subsection{Study case:  Quadratic tracking cost with  bilinear distributed control  and boundary control, of either bilinear or Robin type.}
 
Let's now introduce the particular optimal control problem that we focus on this work. The target dynamics for the cell density $u$ is a variable called $u_d:(0,T)\times \Omega_o\to \mathbb{R}$ and the problem is defined by taking 
$$
F(u,v)= \frac12 (u-u_d)^2 \, 1_{\Omega_o}\,,
\quad 
G(u,v)=0\,, 
\quad 
R(u,v)=0
\quad\mbox{ and }\quad 
S(u,v,f)=-v + u + f\, v\, 1_{\Omega_c}\,.
$$
Then, the cost functional introduced in \eqref{eq:CostFunctional} reads
\beq
J(f,g)
\,=\,
\frac{1}{2\,T\, |\Omega_o|}\int_0^T\!\!\int_{\Omega_o}(u-u_d)^2 d\x dt
+ \frac{\alpha_f}{2\,T\, |\Omega_c|}\int_0^T\!\!\int_{\Omega_c} f^2 d\x dt
+ \frac{\alpha_g}{2\,T\, |\partial\Omega_c|} \int_0^T \int_{\partial\Omega_c} g^2 d\sigma dt\,.
\eeq
subject to the initial boundary valued PDE problem
\beq\label{eq:PDEsystem}
\left\{\ba{rcl}
u_t + \nabla\cdot (-\nabla u + u \nabla v) 
&=&
0\,,
\\ \hueco
v_t + \nabla\cdot (-\nabla v) +v - u
&=&
 f\, v\, 1_{\Omega_c}\,,
 \\ \hueco
(-\nabla u + u \nabla v)\cdot \n \big|_{\partial\Omega}
=0,
\quad
(-\nabla v)\cdot \n \big|_{\partial\Omega}
&=&
-P(v,g\, 1 _{\partial\Omega_c})\,,
 \\ \hueco
 u(0,\x)=u_0(\x)\,,\quad v(0,\x)&=&v_0(\x) .
\ea\right.
\eeq
where for simplicity, we have taken all reaction coefficients equal to value $1$. 
The choice of $S(u,v,f)$ implies that we are imposing a distributed bilinear control term $f\, v\, 1_{\Omega_c}$, which models the injection of chemical in locations where $f> 0$ and subtraction in regions where $f< 0$. 
Moreover, by taking $G(u,v)=0$ we are considering that there is no final time control, that is, the system is just tracking the quadratic control. Furthermore, by considering $R(u,v)=0$ we are imposing that there is no reaction in the $u$-equation, hence one has conservation of the total quantity of  cells, that is, the following relation holds:
\begin{equation} \label{cons-u}
\int_\Omega u(t,\cdot)d\x=\int_\Omega u_0(\cdot)d\x\,,\quad \forall\, t>0\,.
\end{equation}

We complement the study by considering two different type of boundary controls: Robin and bilinear boundary controls. 
In particular, these boundary control are modeled using the following expressions:
\beq\label{eq:typesBoundaryControls}
P(v,g)
\,=\,
\left\{\ba{ll}
\sigma(g-v) & \mbox{Robin boundary control}\,,
\\ \hueco
 g\, v & \mbox{bilinear boundary control}\,.
\ea\right.
\eeq

In the Robin case, $g\ge0$ in $(0,T)\times\partial\Omega_c $ models the amount of external chemical added in the system and parameter $\sigma>0$ represents the permeability coefficient. This type of boundary control has been studied in \cite{LebiedzMaurer}. On the other hand, in the bilinear boundary control case, $g\ge0$ represents that there is injection of chemical thorough the boundary, and $g\le0$ represents that chemical is being extracted.  This type of boundary control has been studied in 
\cite{LebiedzMaurer}. In any case, the amount of variable $v$ evolves as given in the following expression:
\begin{equation} \label{total-v}
\frac{d}{dt}  \int_\Omega v(t,\cdot) \, d\x
+ \int_\Omega v(t,\cdot) \, d\x
= \int_\Omega u_0  \, d\x
+ \int_\Omega f \, v \, 1_{\Omega_c}  \, d\x
+ \int_{\partial\Omega_c} P(v,g) \, d\x \,.
\end{equation}

\section{Discretize-then-Optimize approach.}\label{sec:DiscOpt}

In the following we present our algorithm in $1D$ domains, but the ideas can be extended naturally to higher dimensions without loss of generality. We first present an upwind FV numerical scheme  to approximate the state variables and we establish the well-posedness of the system. Then we compute the associate discrete adjoint problem, and from there we derive an explicit formula to compute the discrete gradient. Additionally we review the main steps of the descend method that we adopt, the  Adam algorithm.

\

We consider the Keller-Segel chemotaxis system defined in the interval domain $Q=(-L,L)\times (0,T)$, that is $\Omega=(-L,L)$. Then, the initial-boundary valued controlled PDE problem reads:
\beq\label{origsys}
\left\{\ba{rcl}
u_t- u_{xx}+(u \,v_x)_x
&=&
0\,,
\\ \hueco
v_t-  v_{xx}+ v-u
&=&f\,v\, 1_{\Omega_c}, 
\ea\right.
\eeq
with boundary conditions
$$
\ba{rcl}
(- u_x+u \,v_x)(t,-L)
&=&
(- u_x+u \,v_x)(t,L)
=
0\,,
\\ \hueco
v_x(t,-L)
&=&
-P(v(t,-L),g(t,-L)),
\\ \hueco
- v_x(t,L)
&=&
-P(v(t,L),g(t,L))\,,
\ea
$$
initial conditions
$$
u(0,x)=u_0(x), \quad v(0,x)=v_0(x).
$$
and the continuous cost functional:
\begin{align*}
    J(u,f,g)&=\frac{1}{2} \frac{1}{T\, |\Omega_o|}\int_0^T\!\!\int_{\Omega_o}(u-u_d)^2  \, dx dt
    +\frac{\alpha_f}{2} \frac{1}{T\, |\Omega_c|}\int_0^T\!\!\int_{\Omega_c} f^2   \, dx dt\\
    & +\frac{\alpha_g}{2} \frac{1}{T} \int_0^T \left(g(t,-L)^2+g(t,L)^2\right)  \, dt. 
\end{align*}


 We discretize state problem \eqref{origsys}  using an upwind finite volume scheme in space and a semi-implicit finite difference in time, with an upwind approximation for the distributed control term. To minimize the cost functional, we will use a gradient descent method (Adam  scheme) with the identification of gradient of the reduced functional $\widetilde J (f,g)$ given above, via the computation of the discrete adjoint problem for $(\varphi, \psi)$.

 To do this, we divide $\overline\Omega$ into $J$ intervals $K_j=[x_{j-1},x_j]$ of length $dx=2L/J$, such that $\overline\Omega=\bigcup_{j=1}^J K_j$. 
 We also divide $[0,T]$ into $N$ intervals $I_n=[t_{n-1},t_n]$ of length $dt=T/N$, such that $[0,T]=\bigcup_{n=1}^N I_n$. 
We then first approximate the state system for $(u,v)$, and then the adjoint system for $(\varphi,\psi)$. 

\

We consider the discrete space $\mathbb{P}_0[t,x]$ to approach all variables, controls, states and adjoint variables, and the discrete $L^2$ scalar product defined as
$$
\int_0^T\!\!\int_{-L} ^L u\, v=\sum_{n=1}^N\sum_{j=1}^J dt\, dx\, u_j^n v_j^n,
\quad\quad\quad
\forall\, u=(u_j^n),v=(v_j^n)\in\mathbb{P}_0[t,x]\,.
$$

\

Then, for any discrete controls $f=(f_j^n)\in\mathbb{P}_0[t,x]$ and $g_k=(g_k^n)\in\mathbb{P}_0[t]$ (with $k=1,J$ denoting the two endpoints of the boundary), we consider the corresponding discrete states $u=(u_j^n),v=(v_j^n)\in\mathbb{P}_0[t,x]$ and  the discrete cost functional:
$$
    J(u,f,g)=\frac{1}{2} \frac{1}{T\, |\Omega_o|}\int_0^T\!\!\int_{-L}^L (u-u_d)^2 1_{\Omega_o}  
    +\frac{\alpha_f}{2} \frac{1}{T\, |\Omega_c|} \int_0^T  \!\!\int_{-L}^L f^2  1_{\Omega_c}
    +\frac{\alpha_g}{2} \frac{1}{T} \int_0^T (g_1^2+g_J^2)\,. 
$$

\subsection{Numerical scheme for the state variables}

In order to describe the upwind FV scheme, given an interval $K_j$ we denote its index set of adjacent interior intervals to $K_j$ as 
$${\cal B}_j^o=\{ j-1,j+1\}\cap \{ 2,\cdots, J-1\}$$
and the corresponding boundary intervals as 
$${\cal B}_j^\partial=\{ j-1,j+1\}\cap \{ 1,J\}.$$
We can define the linear and decoupled (first $v^n_j\approx v|_{I_n\times K_j}$ after $u^n_j\approx u|_{I_n\times K_j}$) state discrete problem as:

\

\textbf{Initialization:} Let
$
u_j^0=\oint_{I_j}u_0$ and $v_j^0=\oint_{I_j}v_0$ $\forall\, j=1,\dots,J.
$

\

\textbf{Step $n$:}  a) Given $(u_j^{n-1})_j$ and $(v_j^{n-1})_j$, and the control $(f_j^{n})_j$, 
compute $(v_j^{n})_j$  solving for all $j=1,\dots,J$:

\beq\label{eq:scheme_v}
\ba{c}\dis
dx\frac{v_j^n-v_j^{n-1}}{dt}
+\sum_{k\in {\cal B}_j^o }    \frac{v_{j}^n-v_k^{n}}{dx}
    + dx\, v_j^n - dx\, u_j^{n-1}
\\ \hueco\dis
    =dx [ (f^n_j)_+v^{n-1}_j+(f_j^n)_-v^n_j]\, 1_{\Omega_c}
    + \sum_{k\in {\cal B}_j^\partial } P(v,g)^n_k\, 1_{\partial\Omega_c}\,.
\ea
\eeq
    
In the case of Robin control,  an implicit approximation of the boundary term is considered 
    $$P(v,g)^n_k=\sigma(g^n_k-v^n_k)
    $$ and for the bilinear  boundary control  we take the non-centered in time approximation 
    $$P(v,g)^n_k= (g^n_k)_+ v^{n-1}_k + (g^n_k)_- v^{n}_k\,.
    $$

\
    
  b) Given $(u_j^{n-1})_j$ and $(v_j^{n})_j$, compute $(u_j^{n})_j$  solving 
      

\beq\label{eq:scheme_u}
dx\frac{u_j^n-u_j^{n-1}}{dt}
+\sum_{k\in {\cal B}_j^o }  \left\{  \frac{u_{j}^n-u_k^{n}}{dx}
+\left(\frac{v_{k}^n-v_j^{n}}{dx}\right)_+u_j^n  +\left(\frac{v_{k}^n-v_j^{n}}{dx}\right)_-u_{k}^n \right\}=0 \,. 
\eeq

\

\begin{remark}

Scheme \eqref{eq:scheme_v} satisfy the following time dynamics of global amount of chemical, which is a discrete version of \eqref{total-v},
$$
\sum_{j=1}^J dx\, \frac{v_j^n - v_j^{n-1}}{dt} 
+\sum_{j=1}^J dx\, v_j^n 
= 
 \sum_{j=1}^J dx\, [(f^n_j)_+v^{n-1}_j+(f_j^n)_-v^n_j ]\, 1_{\Omega_c}
+ \sum_{k=1,J} P(v,g)^n_k \, 1_{\partial\Omega_c}\,.
$$

\

Moreover, adding scheme \eqref{eq:scheme_u} in all volumes $j=1,\cdots, J$, the effects of diffusion and chemotaxis cancel, then solutions of \eqref{eq:scheme_u} satisfy the following conservation property, which is a discrete version of  \eqref{cons-u}:
\begin{equation} \label{cons-u-dx}
\sum_{j=1}^J dx\, u_j^n = \sum_{j=1}^J dx\, u_j^{n-1} .
\end{equation}

\end{remark}

\

At this point, we are in position to prove the well-posedness of the scheme. 
\begin{theorem}\label{th:well-posed-scheme}
The decoupled and linear scheme \eqref{eq:scheme_v}-\eqref{eq:scheme_u} is uniquely solvable, positivity-preserving and mass-conservative
\end{theorem}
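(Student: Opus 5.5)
The plan is to handle the two linear systems one after the other, showing for each that its matrix is an M-matrix. Concretely, the matrix will be strictly or weakly column (or row) diagonally dominant, with positive diagonal entries and nonpositive off-diagonal entries. An M-matrix is invertible, which gives unique solvability, and its inverse is entrywise nonnegative, which gives positivity once the right-hand side is nonnegative. Mass conservation for $u$ is just \eqref{cons-u-dx}, already established in the Remark, so it only needs to be quoted. Throughout I assume $u_0,v_0\ge0$, hence $u_j^0,v_j^0\ge0$, and argue by induction on $n$, taking $u^{n-1},v^{n-1}\ge0$.

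\textbf{Step a), the $v$-system.} Multiply \eqref{eq:scheme_v} by $dt/dx$ and move every $v^n$ term to the left. The diagonal coefficient is
$$1+dt+\frac{dt}{dx^2}\,\#{\cal B}_j^o-dt\,(f_j^n)_-1_{\Omega_c}+\frac{dt}{dx}\,c_j,$$
where $c_j\ge0$ collects the boundary contribution: $\sigma$ in the Robin case, or $-(g^n_k)_-$ in the bilinear case. Since $(f_j^n)_-\le0$ and $(g^n_k)_-\le0$, every one of these extra terms is nonnegative. The off-diagonal coefficients are $-dt/dx^2\le0$. The matrix is symmetric, and its diagonal exceeds the sum of the absolute values of the off-diagonal entries in its row by at least $1+dt>0$. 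So it is a strictly diagonally dominant Z-matrix, hence a nonsingular M-matrix.

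The right-hand side is $v_j^{n-1}+dt\,u_j^{n-1}+dt\,(f_j^n)_+v_j^{n-1}1_{\Omega_c}$ plus the boundary source. That source is $\frac{dt}{dx}\sigma g^n_k$ with $g\ge0$ in the Robin case, or $\frac{dt}{dx}(g^n_k)_+v^{n-1}_k$ in the bilinear case. All of these terms are nonnegative by the induction hypothesis. Hence $v^n$ exists, is unique, and satisfies $v^n\ge0$. The upwind splitting $f=f_++f_-$ (and likewise for $g$) is exactly what puts the negative parts on the diagonal with the correct sign.

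\textbf{Step b), the $u$-system.} With $v^n$ now known, write \eqref{eq:scheme_u} as $A u^n=u^{n-1}$, where
$$A_{jj}=1+\frac{dt}{dx}\sum_{k\in{\cal B}_j^o}\Big(\frac1{dx}+\Big(\frac{v_k^n-v_j^n}{dx}\Big)_+\Big),\qquad A_{jk}=\frac{dt}{dx}\Big(-\frac1{dx}+\Big(\frac{v_k^n-v_j^n}{dx}\Big)_-\Big)\le0 .$$
This matrix is not row-dominant in general, so I would use column dominance instead. The main obstacle is the bookkeeping needed to check it. Fix a column $k$ and a neighbour $j$. The entry $A_{jk}$ has absolute value $\frac{dt}{dx}\big(\frac1{dx}+(\frac{v_j^n-v_k^n}{dx})_+\big)$, because $-(a)_-=(-a)_+$. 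This is exactly the term for the pair $(k,j)$ that appears in the diagonal entry $A_{kk}$. Summing over neighbours, each column sum of $A$ equals $1>0$, which is the algebraic content of the conservation property. So $A^T$ is a strictly diagonally dominant Z-matrix, and $A$ is a nonsingular M-matrix with $A^{-1}\ge0$. Therefore $u^n$ exists, is unique, and satisfies $u^n\ge0$.

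Summing the equations over $j$, which amounts to using the unit column sums, gives \eqref{cons-u-dx}. This closes the induction and yields all three properties claimed.
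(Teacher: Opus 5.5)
Your proof is correct. For the $v$-step it is essentially the paper's argument. Both observe that the symmetric matrix has a positive, strictly dominant diagonal, since the implicit treatment of $(f_j^n)_-$, $(g_k^n)_-$ and of the Robin term only enlarges it. Both also note that the right-hand side is nonnegative, with $g\ge0$ required in the Robin case. You additionally make explicit the M-matrix mechanism ($A^{-1}\ge0$) that the paper leaves implicit.

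The real difference is in the $u$-step. The paper splits the matrix as $A+C\!h(v^n)$, where $A$ is symmetric positive definite and $C\!h(v^n)$ has zero column sums and satisfies $(C\!h(v^n)u_+,u_-)\ge0$. It then proves, in Lemma~\ref{sol-pos}, that nonnegative data admit a nonnegative solution, via a Leray--Schauder fixed point on a truncated scheme. The sign comes from testing with $u_-$, and the a priori bound from the $\ell^1$ estimate given by conservation. Finally it gets surjectivity by splitting $b=b_++b_-$ and deduces regularity.

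You instead note that the full matrix is a Z-matrix whose column sums all equal $1$, which is the algebraic form of \eqref{cons-u-dx}. Its transpose is therefore strictly diagonally dominant, and this gives invertibility, uniqueness and $A^{-1}\ge0$ in one step. Your route is shorter and entirely elementary, with no fixed-point theorem used for what is a linear system. It also delivers uniqueness and inverse-nonnegativity together, rather than passing through existence for nonnegative data and a superposition argument. The paper's route uses only the abstract structural properties of the splitting, and it is the template that carries over to nonlinear or truncated schemes where no M-matrix structure is available.

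Both arguments rely on reading ${\cal B}_j^o$ as the set of neighbouring cells within $\{1,\dots,J\}$, so that the neighbour relation is symmetric. Under the literal definition in the paper, $2\in{\cal B}_1^o$ but $1\notin{\cal B}_2^o$, which would destroy both symmetry and conservation.
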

\begin{proof} 
 The proof of the theorem is divided in two steps:

\

{\it Step 1.} Existence and uniqueness of  $v^n\ge 0$ solution of \eqref{eq:scheme_v}  if $u^{n-1},v^{n-1}\ge 0$. 

\

In fact, solving $(v^n_j)_j$ is equivalent to solve a linear quadratic linear system with symmetric and positive definite matrix which also has positive strictly dominant  diagonal. In addition,  to get $v^n_j\ge 0$ unconditionally (without imposing constraints),  it is  helpful the explicit-implicit approximation of the distributed and boundary bilinear control given by 
$$
[(f^n_j)_+v^{n-1}_j+(f_j^n)_-v^n_j ]\, 1_{\Omega_c}
\quad \hbox{and} \quad
 (g^n_k)_+ v^{n-1}_k + (g^n_k)_- v^{n}_k
$$
and the  implicit approximation of the Robin  boundary control
$$
\sigma(g^n_k-v^n_k)\,.
$$

Note that the right hand side vector is nonnegative if $u^{n-1},v^{n-1}\ge 0$.

%
%

\

{\it Step 2.}
  Existence and uniqueness of $u^n\geq0$ solution of \eqref{eq:scheme_u} if $u^{n-1},v^{n}\ge 0$. 
  
  \
 
 The linear scheme  for $u$ \eqref{eq:scheme_u} can be rewritten as the algebraic linear system $A\, u+C\!h(v^n)\, u =b(u^{n-1})\in \mathbb{R}^J$, with $A$ (symmetric) definite positive and  strictly dominant diagonal matrix (due to A being the approximation of the time derivate and diffusion terms), 
  and the chemotaxis matrix  $C\!h(v^n)$ which is conservative (the sum by columns vanish) and satisfies   the  property  $(C\!h(v^n) u_+,u_-)\ge 0 $. Consequently,  it suffices to prove that the matrix $A+ Ch(v^n) $ is regular. 
 
%
\begin{lemma}  \label{sol-pos}
 For any $b\in \mathbb{R}^J_+$, there exists $u\in \mathbb{R}^J_+$ such that $A\, u+Ch(v^n)\, u =b$.
 \end{lemma} 
 \begin{proof}  We will use Leray-Schauder theorem in $\mathbb{R}^J$ related to the operator
 $$
 R: w\in \mathbb{R}^J \to u\in \mathbb{R}^J 
 \quad \hbox{solution of} \quad
 A\,u=b-Ch(v^n)\,w_+ \,.
 $$
 One has that $R$ is a continuos function and all possible fixed-points of  $\lambda \,R$ ($\lambda \in [0,1]$) satisfy 
 $$A\,u + \lambda \,Ch(v^n)\,u_+=\lambda \,b\,,
 $$
 that is $u=(u_j)\in \mathbb{R}^J$ is solution of the following  ``truncated" scheme 
 $$
\frac{dx }{dt} u_j
+\sum_{k\in {\cal B}_j^o }  \left\{  \frac{u_{j}-u_k}{dx}
+\lambda \left(\frac{v_{k}^n-v_j^{n}}{dx}\right)_+(u_j)_+  
+\lambda \left(\frac{v_{k}^n-v_j^{n}}{dx}\right)_-(u_{k})_+ \right\}
= \lambda\, b_j\, . 
 $$
  In particular, by testing by $u_-$, one has $u\ge 0$ (see \cite{Saito}). On the other hand, 
  adding in all volumes $j=1,\cdots, J$, the effects of diffusion and chemotaxis cancel (see \eqref{cons-u-dx}), yielding to
  
 $$
 \frac{dx}{dt} \|u\|_1= \frac1{dt}\sum_j dx \, u_j =\lambda \sum_j b_j \le \sum_j b_j \,.
 $$
 Then, all possible fixed-points of $\lambda \,R$ are bounded, hence there exists a fixed point of $R$.
 \end{proof}

 \begin{corollary}  For any $b\in \mathbb{R}^J$, there exists $u\in \mathbb{R}^J$ such that $A\, u+Ch(v^n)\, u =b$. Consequently, the matrix $A+ Ch(v^n) $ is regular.
  \end{corollary}
 
  \begin{proof} {\it Step1.} For any $c\in \mathbb{R}^J_-$, there exists $w\in \mathbb{R}^J_-$ such that $A\, w+Ch(v^n)\, w =c$. Indeed, taking $w=-u$ with $u\ge 0$, by using Lemma \ref{sol-pos} one can deduce that any solution of $A\, u+Ch(v^n)\, u =b=-c\ge 0$ satisfy $u=-w\geq0$, hence $w\leq0$.
  
 {\it  Step 2.} For any $b\in \mathbb{R}^J$, we can write $b=b_+ + b_-$, therefore $z=u+w$ satisfies $A\, z+Ch(v^n)\, z =b$, where $u\ge 0$ satisfies $A\, u+Ch(v^n)\, u =b_+$ and  $w\le 0$ satisfies $A\, w+Ch(v^n)\, w =b_-$
  \end{proof} 
  This concludes the proof of Theorem \ref{th:well-posed-scheme}.
  \end{proof} 

\subsection{Discrete sensitivity problem}

We  now compute the linear problem satisfied by the derivative of discrete states  $(u,v)=(u(f,g),v(f,g))$ around a discrete control $(f,g)$ in any direction $(F,G)$, which is the discrete equivalent of section~\ref{sec:subseclinearization}.

\subsubsection{Differentiating with respect to distributed control}

Given a discrete control $f=(f^m_i)\in\mathbb{P}_0[t,x]$ and  
fixing $m$ and $i$, 
we can compute the problem satisfied by $\displaystyle U^n_j=\frac{\partial u^n_j(f,g)}{\partial f^m_i},$ $\displaystyle V^n_j=\frac{\partial v^n_j(f,g)}{\partial f^m_i}$ for any $n\ge m$: 

\

\textbf{Initialization:}
Let $V_j^0=0$,  $U_j^0=0$ $\forall\, j=1,\dots,J.$

\

\textbf{Step $n$:} a) Compute $(V_j^n)_j$ solving:


\beq\label{eq:scheme_Vf}
\ba{c}\dis
dx\frac{V_j^n-V_j^{n-1}}{dt}
+\sum_{k\in {\cal B}_j^o }\frac{V_{j}^n-V_k^{n}}{dx}
+ dx \,V_j^n    - dx \, U_j^{n-1} 
\\ \hueco\dis
    - dx[(f^n_j)_+V^{n-1}_j+(f_j^n)_-V^n_j]\, 1_{\Omega_c}
     - \sum_{k\in {\cal B}_j^\partial }(P_v(v,g) V)^n_k \, 1_{\partial\Omega_c}
\\ \hueco\dis
= dx [ H(f^n_j)v_j^{n-1}+H(-f_j^n)v_j^n ] 
 1_{\Omega_c} \delta_{nm}^{ji}\,.
\ea
\eeq
b) Compute $(U_j^n)_j$ solving:
\beq
\ba{c}\dis\label{eq:scheme_Uf}
dx\frac{U_j^n-U_j^{n-1}}{dt}
+\sum_{k\in {\cal B}_j^o }\left\{\frac{U_{j}^n-U_k^{n}}{dx}
+\left(\frac{v_{k}^n-v_j^{n}}{dx}\right)_+U_j^n  +\left(\frac{v_{k}^n-v_j^{n}}{dx}\right)_-U_{k}^n \right\}
\\ \hueco\dis
+\sum_{k\in {\cal B}_j^o }\frac{V_{k}^n-V_j^{n}}{dx} \Big\{
 u_j^n  H(v_{k}^n-v_j^{n}) 
 +u_{k}^n H(v_{j}^n-v_k^{n}) \Big\}
 =0\,.
\ea
\eeq
Hereafter, $H(\cdot)$ is the Heaviside function with $H(0)=0.5$, and $\delta_{nm}^{ji}$ is the Kronecker delta which is equal to $1$ only if $n=m$ and $j=i$.

\subsubsection{Differentiating with respect to boundary control}

Similarly, given  $g=(g_l^m)\in\mathbb{P}_0[t]$ ($l=1,J$), we can compute the problem satisfied by  $\displaystyle  U^n_j=\frac{\partial u^n_j(f,g)}{\partial g^m_l}, V^n_j=\frac{\partial v^n_j(f,g)}{\partial g^m_l}$ for any $n\ge m$:

\

\textbf{Initialization:}
Let $V_j^0=0$,  $U_j^0=0$ $\forall\, j=1,\dots,J.$

\

\textbf{Step $n$:} 
 a) Compute $(V_j^n)_j$ solving:
%
\beq\label{eq:scheme_Vg}
\ba{c}\dis
dx\frac{V_j^n-V_j^{n-1}}{dt}
   +\sum_{k\in {\cal B}_j^o }\frac{V_{j}^n-V_k^{n}}{dx}
   + dx \,V_j^n    - dx \, U_j^{n-1} 
\\ \hueco\dis
    - dx[(f^n_j)_+V^{n-1}_j+(f_j^n)_-V^n_j]\, 1_{\Omega_c}
     - \sum_{k\in {\cal B}_j^\partial }(P_v(v,g) V)^n_k \, 1_{\partial\Omega_c}
\\ \hueco\dis
    =  \sum_{k\in {\cal B}_j^\partial }
     (P_g(v,g) )_k^n \, 1_{\partial\Omega_c} \delta_{nm}^{kl}\,.
\ea
\eeq
b) Compute $(U_j^n)_j$ solving:
\beq\label{eq:scheme_Ug}
\ba{c}\dis
dx\frac{U_j^n-U_j^{n-1}}{dt}
+\sum_{k\in {\cal B}_j^o }\left\{\frac{U_{j}^n-U_k^{n}}{dx}
+\left(\frac{v_{k}^n-v_j^{n}}{dx}\right)_+U_j^n  +\left(\frac{v_{k}^n-v_j^{n}}{dx}\right)_-U_{k}^n \right\}
\\ \hueco\dis
+\sum_{k\in {\cal B}_j^o }\frac{V_{k}^n-V_j^{n}}{dx} \Big\{
u_j^n  H(v_{k}^n-v_j^{n}) 
+u_{k}^n H(v_{j}^n-v_k^{n}) \Big\}
=0\,.
\ea
\eeq

Concretely, in the case of 
Robin control we have
$$(P_v(v,g)V)^n_k=-\sigma\, V^n_k\quad \hbox{and}  \quad (P_g(v,g))^n_k=\sigma,
$$
 and for the bilinear control case we have
$$(P_v(v,g)V)^n_k= (g^n_k)_+ V^{n-1}_k + (g^n_k)_- V^{n}_k 
\quad \hbox{and}  \quad (P_g(v,g))^n_k= H(g^n_k) v^{n-1}_k  + H(-g^n_k) v^{n}_k \,.
$$

Previous linearized schemes, for any $F$ and $G$, can be rewritten as a square linear system $A (U,V)^t=b$ (with matrix $A$ and vector $b$  depending on $F$ or $G$ respectively), where the matrix $A$ is the same matrix for the $(u,v)$-scheme \eqref{eq:scheme_v}-\eqref{eq:scheme_u}, which is regular. Then, similarly to Theorem \ref{th:well-posed-scheme}, one has the following result.

\begin{theorem}
The  linearized $(U,V)$ schemes \eqref{eq:scheme_Vf}-\eqref{eq:scheme_Uf} and \eqref{eq:scheme_Vg}-\eqref{eq:scheme_Ug}
are uniquely solvable.
\end{theorem}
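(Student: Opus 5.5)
The linearized schemes are decoupled and sequential in time, so the plan is to argue step by step in $n$, by induction. At each step we solve a linear system for $V^n=(V^n_j)_j$ and then a linear system for $U^n=(U^n_j)_j$. The data $(u^n,v^n)$ and the controls $(f,g)$ are fixed, and $(u^n,v^n)$ are well defined by Theorem \ref{th:well-posed-scheme}. In these systems $U^{n-1}$, $V^{n-1}$ and the Kronecker right-hand sides are known. Unique solvability of the whole linearized scheme is then equivalent to the regularity of the two square matrices acting on $V^n$ and $U^n$ at every step $n$. Since both are $J\times J$ systems, it suffices to show that the homogeneous systems have only the trivial solution, or to identify each matrix with one already shown to be regular.

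\textbf{Step a) (the $V^n$-system).} In \eqref{eq:scheme_Vf} and \eqref{eq:scheme_Vg} the unknown $V^n$ appears in the following terms:
\begin{itemize}
\item $\frac{dx}{dt}V^n_j+\sum_{k\in{\cal B}_j^o}\frac{V^n_j-V^n_k}{dx}+dx\,V^n_j$,
\item $-dx\,(f^n_j)_-V^n_j\,1_{\Omega_c}$,
\item the implicit boundary part of $-(P_v V)^n_k$, which is $+\sigma V^n_k$ (Robin case) or $-(g^n_k)_-V^n_k$ (bilinear case).
\end{itemize}
This is exactly the matrix of the state scheme \eqref{eq:scheme_v} with respect to $v^n$. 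Since $-(f^n_j)_-\ge0$ and $-(g^n_k)_-\ge0$, all extra contributions are nonnegative diagonal terms. The matrix is therefore symmetric, with positive diagonal and strictly dominant diagonal, hence positive definite and regular, as in Step 1 of the proof of Theorem \ref{th:well-posed-scheme}. All remaining terms are known and go to the right-hand side. So $V^n$ exists and is unique.

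\textbf{Step b) (the $U^n$-system).} Once $V^n$ is known, the term
$$\sum_{k\in{\cal B}_j^o}\frac{V^n_k-V^n_j}{dx}\{u^n_jH(v^n_k-v^n_j)+u^n_kH(v^n_j-v^n_k)\}$$
is data. The $U^n$-system in \eqref{eq:scheme_Uf} and \eqref{eq:scheme_Ug} then reads $(A+Ch(v^n))U^n=b$. Here $A$ and $Ch(v^n)$ are the same matrices appearing in Step 2 of the proof of Theorem \ref{th:well-posed-scheme}, with the identical upwind coefficients $\left(\frac{v_k^n-v_j^n}{dx}\right)_\pm$. The Corollary following Lemma \ref{sol-pos} gives that $A+Ch(v^n)$ is regular, so $U^n$ exists and is unique for any right-hand side $b\in\mathbb{R}^J$. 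The right-hand side need not have a sign, which is precisely why the Corollary, rather than Lemma \ref{sol-pos} alone, is needed.

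\textbf{Induction and main obstacle.} Induction on $n$, starting from $U^0=V^0=0$, concludes the proof; equivalently, the global block lower-triangular matrix has regular diagonal blocks. The only delicate point is checking that the sign conventions in the linearized boundary and bilinear terms really reproduce nonnegative diagonal contributions in the $V$-matrix. This concerns in particular the implicit parts $(f^n_j)_-V^n_j$ and $(g^n_k)_-V^n_k$ and the Robin term $-\sigma V^n_k$. Once this is verified, no new estimate is required.
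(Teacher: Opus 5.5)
Your proposal is correct and follows the paper's approach: the paper notes that each linearized scheme is a square linear system whose matrix coincides with that of the state scheme \eqref{eq:scheme_v}--\eqref{eq:scheme_u}, already shown to be regular in Theorem~\ref{th:well-posed-scheme}. Your step-by-step induction makes explicit the block lower-triangular structure that the paper's one-line argument leaves implicit: the $V^n$-block equals the positive definite matrix of Step~1, the $U^n$-block is $A+Ch(v^n)$ and is handled by the Corollary, and $U^{n-1}$ and $V^n$ enter only the right-hand sides.
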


%

\subsection{Implementation of the discrete adjoint system}

The implementation of the discrete adjoint system is derived from the  discrete linearized problem satisfied by $(U,V)$ (either scheme \eqref{eq:scheme_Vf}-\eqref{eq:scheme_Uf} or scheme \eqref{eq:scheme_Vg}-\eqref{eq:scheme_Ug}). Then, the adjoint variables 
  $(\boldsymbol{\varphi},\boldsymbol{\psi})$, for $\boldsymbol{\varphi}=(\varphi_j^n)_j^n$ and $\boldsymbol{\psi}=(\psi_j^n)_j^n$ satisfy the following decoupled system (first $\varphi^n_j \approx \varphi |_{I_n\times K_j}$ after $\psi^n_j \approx \psi |_{I_n\times K_j}$):

\

{\bf Initialization:} $\psi_j^{N+1}=0$, $\varphi_j^{N+1}= 0 $, for all $j=1,\dots,J$. 

\

For $n=N,\dots,1$,

{\bf Step} $n$: a)  Given $\psi^{n+1}$ and $\varphi^{n+1}$,  solve  
$\varphi^{n}=(\varphi_j^{n})_j$

\beq\label{eq:schemevarphi}
\ba{c}\dis
dx\frac{\varphi_j^n-\varphi_j^{n+1}}{dt}
+ \sum_{k\in {\cal B}_j^o } \frac{\varphi_{j}^n -\varphi_k^n}{dx}
  - dx\,\psi_j^{n+1}  
 -dx \sum_{k\in {\cal B}_j^o } 
\left(\frac{v_{k}^n-v_j^n}{dx}\right)_+\frac{\varphi_{k}^n-\varphi_j^n}{dx}
\\ \hueco\dis
= \frac{1}{T|\Omega_o|}dx(u_j^n-(u_d)_j^n)\, 1_{\Omega_o},
\ea
\eeq
b) Solve $\psi^{n}=(\psi_j^{n})_j$
\beq\label{eq:schemepsi}
\ba{c}\dis
dx\frac{\psi_j^n-\psi_j^{n+1}}{dt}
+\sum_{k\in {\cal B}_j^o }\frac{\psi_{j}^n-\psi_k^n}{dx}
+ dx \, \psi_j^n
-dx[(f_j^n)_-\psi_j^n + (f_j^{n+1})_+\psi_j^{n+1}] \, 1_{\Omega_c}
\\ \hueco\dis
- \sum_{k\in {\cal B}_j^\partial }(P_v(v,g)^* \psi)^n_k \, 1_{\partial\Omega_c}
+ \sum_{k\in {\cal B}_j^o } \left(H(v_j^n-v_{k}^n)u_{k}^n+H(v_{k}^n-v_j^n)u_j^n\right) 
\frac{\varphi_{k}^n-\varphi_j^n}{dx} \, 1_{\Omega_c}
=0,
\ea
\eeq
where $P_v(v,g)^*$ denotes the adjoint operator of $P_v(v,g)$ with 
$(P_v(v,g)^* \psi)^n_j=-\sigma\, \psi^n_j$ for Robin and  
$(P_v(v,g)^* \psi)^n_j=(g^{n+1}_k)_+ \psi^{n+1}_k + (g^n_k)_- \psi^{n}_k$ for bilinear control, respectively.

\begin{corollary} The corresponding  discrete adjoint problem \eqref{eq:schemevarphi}-\eqref{eq:schemepsi} is uniquely solvable (by duality this  adjoint scheme is  $A^t (\varphi,\psi)^t=(J_u,J_v)^t$).
\end{corollary}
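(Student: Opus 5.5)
The plan is to use duality: the adjoint scheme \eqref{eq:schemevarphi}-\eqref{eq:schemepsi} is the transposed linear system of the linearized scheme, and a square matrix is invertible exactly when its transpose is. Stack all discrete unknowns $(U,V)=(U_j^n,V_j^n)_{j,n}$ for $n=1,\dots,N$ into one vector, ordered by time level with $V^n$ before $U^n$. Then each of the linearized schemes \eqref{eq:scheme_Vf}-\eqref{eq:scheme_Uf} and \eqref{eq:scheme_Vg}-\eqref{eq:scheme_Ug} is a global square system $\mathcal{A}(U,V)^t=b$. The matrix $\mathcal{A}$ is the same in both cases; only the right-hand side changes.

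\textbf{Step 1: structure and invertibility of $\mathcal{A}$.} With this ordering, $\mathcal{A}$ is block lower triangular.
\begin{itemize}
\item Its diagonal blocks at level $n$ are themselves lower triangular $2\times 2$ blocks. The first is the $V^n$ matrix: time derivative, diffusion, the term $dx\,V^n$, the implicit $-(f^n)_-$ term and the implicit boundary part.
\item The second is the $U^n$ matrix $A+Ch(v^n)$ from Step 2 of Theorem~\ref{th:well-posed-scheme}. It is coupled to $V^n$ through the linearized chemotaxis term.
\item The subdiagonal blocks collect the explicit dependences on level $n-1$: $U^{n-1}$ in the $V$-equation, $(f^n)_+V^{n-1}$, $(g^n)_+V^{n-1}$, and $U^{n-1}$ in the $U$-equation.
\end{itemize}
The $V^n$ block is strictly diagonally dominant with positive diagonal. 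This holds because $-(f^n_j)_-\ge 0$ and $-(g^n_k)_-\ge0$ add to the diagonal, and in the Robin case $+\sigma$ does. Hence that block is regular. The $U^n$ block is regular by the Corollary following Lemma~\ref{sol-pos}. The determinant of $\mathcal{A}$ is the product of the determinants of these diagonal blocks, so $\mathcal{A}$ is regular; this is the statement of the preceding Theorem.

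\textbf{Step 2: identify the adjoint scheme with $\mathcal{A}^t$.} This is the main obstacle, and it is a bookkeeping task. Take the discrete Lagrangian $\mathcal{L}=J+\langle(\varphi,\psi),\mathcal{A}(U,V)^t-b\rangle$, where the pairing is the discrete $L^2$ product. Then perform discrete summation by parts in time (index shift $n-1\to n$) and in space (symmetrizing the sums over ${\cal B}_j^o$), and collect the coefficients of $U_j^n$ and $V_j^n$. This should produce term by term:
\begin{itemize}
\item the explicit $U^{n-1}$ coupling in the $V$-equation becomes $-dx\,\psi_j^{n+1}$ in \eqref{eq:schemevarphi};
\item the upwind chemotaxis matrix transposes to the convective term $-dx\sum_k(\cdot)_+\frac{\varphi_k^n-\varphi_j^n}{dx}$, using that $Ch(v^n)$ has zero column sums;
\item the linearized chemotaxis coupling transposes to the $H(\cdot)$ term in \eqref{eq:schemepsi};
\item the $(f^n)_\pm$ and $(g^n)_\pm$ terms shift to $(f^{n+1})_+\psi^{n+1}$ and $(g^{n+1})_+\psi^{n+1}$;
\item the source is $\partial J/\partial u$, with $\partial J/\partial v=0$, and the final conditions are $\varphi^{N+1}=\psi^{N+1}=0$.
\end{itemize}
One detail needs care here. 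The factor $1_{\Omega_c}$ appearing on the $H$-coupling term of \eqref{eq:schemepsi} should be checked against the transposition. If it is a typo, the transposed system still has the same structure, so the solvability argument does not depend on it.

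\textbf{Step 3: conclude.} Once the adjoint system is written as $\mathcal{A}^t(\varphi,\psi)^t=(J_u,J_v)^t$, it follows that $\det\mathcal{A}^t=\det\mathcal{A}\neq0$, so it is uniquely solvable for any right-hand side. Equivalently, $\mathcal{A}^t$ is block upper triangular, and the backward sweep $n=N,\dots,1$ (first $\varphi^n$, then $\psi^n$) only requires inverting the transposes of the diagonal blocks from Step 1. These are regular, which also justifies the stated decoupled algorithm.
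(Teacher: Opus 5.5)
Your proposal is correct and takes the same route as the paper. The paper argues that the linearized matrix $A$ is regular, since its diagonal blocks are the regular $v$- and $u$-scheme matrices, and that by duality the adjoint scheme is $A^t(\varphi,\psi)^t=(J_u,J_v)^t$, so $\det A^t=\det A\neq 0$. You fill in what the paper's one-line remark leaves implicit: the block-triangular structure, the diagonal dominance of the $V^n$ block, the term-by-term transposition, and the correct observation that the stray factor $1_{\Omega_c}$ in the $\psi$-equation only touches an off-diagonal coupling block and therefore does not affect solvability.
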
 

\subsection{Computation of discrete gradient}

By defining the discrete reduced cost functional as $\widetilde J(f,g)=J(u(f,g),f,g)$, its gradient   with respect to the distributed control $f$ can be identified by the discrete $L^2$ scalar product using the $\mathbb{P}_0[t,x]$ functions:
\begin{align*}
  \widetilde  J_f(f,g)_j^n&=
  \psi_j^n [ H(f_j^n)v_j^{n-1}+H(-f_j^n)v_j^n ]\, 1_{\Omega_c}
    + \frac{\alpha_f}{T\, |\Omega_c|} 
    f_j^n \, 1_{\Omega_c},
\end{align*}
for any  $j=1,\cdots,J$ and $ n=1,\cdots,N$. 
Similarly, the gradient with respect to the boundary control $g$, is identified by
\begin{align*}
    \widetilde J_g(f,g)_k^n&= 
    \psi_k^n 
    P_g(v,g)_k^n\, 1_{\partial\Omega_c}
    + \frac{\alpha_g}{T\, |\partial\Omega_c|} 
    g_k^n \, 1_{\partial\Omega_c},
     \quad k=1,J,\quad n=1,\cdots,N\,.
\end{align*}
Notice that 
$P_g(v,g)_k^n=\sigma$ for Robin control and 
$P_g(v,g)_k^n=H(g_k^n)v_k^{n-1}+H(-g_k^n)v_k^{n}$ for bilinear control.

\subsection{Adam descent gradient algorithm}

In this section we present the details associated with the descent gradient algorithm considered to minimize functional $\widetilde J(f,g)$, which is based on adapting the ideas of the Adam method presented in \cite{KB15} to be applied onto our problem. The Adam method (whose name comes from adaptive moment estimation) is an efficient stochastic optimization method that only requires the computation of first-order gradients and individual adaptive learning rates for different parameters, which are based on estimates of first and second moments of the gradients. To describe the algorithm we need to introduce the following parameters and quantities:
$$
\ba{rcl}
\alpha & \equiv & \mbox{step size for the descent method}
\\
\beta_1, \beta_2& \equiv & \mbox{exponential decay rates for the moment estimates}
 \\
(m^k_1,m^k_2)=\m^k& \equiv & \mbox{first moment vector at step $k$}
 \\
(z^k_1,z^k_2)=\z^k& \equiv & \mbox{second moment vector at step $k$}
\ea
$$
where $\alpha>0$, $\beta_1,\beta_2\in[0,1)$ and we denote by $\widetilde{\m}, \widetilde{\z}$ the corresponding bias-corrected moment estimates. The considered algorithm reads:
\begin{itemize}
\item \textbf{Initialization:} Consider the following initial guesses
\begin{enumerate} 
\item $(f^0,g^0)=(f_0,g_0)$, with $(f_0,g_0)$ being some initial guess of the control functions. If no information is known about these functions, just consider $(f_0,g_0)=(0,0)$. 
\item $\m^0=(0,0)^t$.
\item $\z^0=(0,0)^t$.
\end{enumerate}
\item \textbf{Algorithm:}  Given $(f^k,g^k)$, to compute $(f^{k+1},g^{k+1})$ as follows:  
\begin{enumerate}
\item 
Compute $(u^k,v^k)$ by solving numerical scheme \eqref{eq:scheme_v}-\eqref{eq:scheme_u} using $(f^k,g^k)$. 
\item 
Compute $(\varphi^k,\psi^k)$ by solving numerical scheme \eqref{eq:schemevarphi}-\eqref{eq:schemepsi} using $(f^k,g^k, u^k,v ^k)$. 
\item
Update $\J^k$ using $(f^k,g^k)$ such that 
$$
\J^k=
\left(
\begin{array}{c}
  \widetilde J_f (f^k,g^k)  \\ 
    \noalign{\vspace{-2ex}}\\
     \widetilde J_g (f^k,g^k)
\end{array}
\right)
=
\left(
\begin{array}{c}
  \frac{1}{T\, |\Omega_o|} S_f(u ^k,v^k,f^k) \, \psi^k \, 1_{\Omega_c}+ 
  \frac{\alpha_f}{T\, |\Omega_c|} f^k  \, 1_{\Omega_c}
   \\
     \noalign{\vspace{-2ex}}\\
    - \frac{1}{T\,  |\Omega_o|} P_g(v ^k,g ^k) \, \psi^k \, 1_{\partial\Omega_c} + 
    \frac{\alpha_g}{T\, |\partial\Omega_c|} g^k \, 1_{\partial\Omega_c}
\end{array}
\right)
$$
\item Iterate until $\|\J^{k}\|\leq \texttt{tol}$ for a given tolerance \texttt{tol} or until the number of iterations exceeds a prescribed threshold \texttt{MaxNumIter}.
\item Update the first moment estimate as $\m^k=\beta_1\m^{k-1}+(1-\beta_1)\J^k$.
\item Update the second raw moment estimate as $\z^k=\beta_2\z^{k-1}+(1-\beta_2)(\J^k)^2$ (with $(\J^k)^2$ denoting the element-wise square).
\item  Compute the bias-corrected first moment estimate as $\widetilde\m^k=\m^k/(1-\beta_1^k)$ (with $\beta_1^k$ denoting $\beta_1$ to the power $k$).
\item  Compute the bias-corrected second raw moment estimate as $\widetilde\z^k=\z^k/(1-\beta_2^k)$ (with $\beta_2^k$ denoting $\beta_2$ to the power $k$).
\item Compute $(f^{k+1}, g^{k+1})$ as follows
$$
\left(
\begin{array}{c}
f^{k+1}  
\\ \hueco
g^{k+1}
\end{array}
\right)
=
\left(
\begin{array}{c}
f^{k}
\\ \hueco
g^{k}
\end{array}
\right)
- \alpha 
\left(
\ba{c}
\frac{\widetilde m_1^k }{\sqrt{z_1^k + \epsilon}}
\\ \hueco
\frac{\widetilde m_2^k }{\sqrt{z_2^k + \epsilon}}
\ea
\right)\,,
$$
with $\epsilon$ being a small parameter.
\end{enumerate} 
\end{itemize}

\begin{remark}
In the case of a restriction of the type $g\ge 0$, we need to update $g^{k+1}$ taking only its positive part, that is, taking $(g^{k+1})_+$ in Step 9.
\end{remark}

\section{Numerical Simulations}\label{sec:simulations} 

In this section, we present several numerical experiments to illustrate the behavior of the proposed numerical algorithm. All the simulations have been carried out using MATLAB software \cite{Matlab}. First, in Section~\ref{subsec:simbilinear} we investigate the effect of considering a bilinear distributed control in the dynamics of the system. Then, in Section~\ref{subsec:simboundary} we study the effect of introducing a boundary control on the system. 

In previous sections, in order to simplify the notation we omitted to write the physical parameters of the system, although it is easy to follow where each parameter should appear through the text. System \eqref{eq:PDEsystem} including all the relevant physical parameters can be written as
\beq
\left\{\ba{rcl}
u_t + \nabla\cdot (-D_u\nabla u + \chi u \nabla v) 
&=&
0\,,
\\ \hueco
v_t + \nabla\cdot (-D_v\nabla v) + \lambda v - \mu u
&=&
 f\, v\, 1_{\Omega_c}\,,
\ea\right.
\eeq
where $D_u$, $D_v$ are diffusion coefficients, $\chi$ denotes the chemotactic sensitivity, and parameters $\lambda$ and $\mu$  represent the degradation and production rates of variable $v$, respectively.

Unless otherwise mentioned, the physical and discretization parameters considered for all the simulations are presented in Table~\ref{tab:physdiscparameters} and the parameters related with the Adam method are detailed in Table~\ref{tab:ADAMSparameters}.

\begin{table}[H]
\begin{center}
\begin{tabular}{|c|c|c|c|c|c|c|c|c|c|c|}
\hline
$\Omega$ & $h$ & $[0,T]$ & $\Delta t$ & $\alpha_f$ & $\alpha_g$ 
& $D_u$  & $\chi$ & $D_v$ & $\lambda$ & $\mu$
 \\
\hline
$(-1, 1)$ & $0.02$ & $[0,0.05]$ 	& $0.0005$ & 0 &  0 
& $0.1$ & $1.0$ & $0.1$ & $0.1$ & $1.0$ 
 \\
 \hline
\end{tabular}
\caption{Physical and discrete parameters}
\label{tab:physdiscparameters}
\end{center}
\end{table}

\begin{table}[H]
\begin{center}
\begin{tabular}{|c|c|c|c|c|c|}
\hline
$\alpha$ & \texttt{tol} & \texttt{MaxNumIter}& $\beta_1$ & $\beta_2$ & $\epsilon$	
 \\
\hline
$0.1$ & $10^{-4}$ & $10^5$ & 0.9 & 0.999 & $10^{-8}$
\\
\hline
\end{tabular}
\caption{Parameters for Adam methods}
\label{tab:ADAMSparameters}
\end{center}
\end{table}

In particular, we consider the most challenging situation without regularization terms in the cost functional ($\alpha_f=\alpha_g=0$). Then, the cost functional reduces to
$$
J(f,g)=\frac{1}{2} \frac{1}{T\, |\Omega_o|}\int_0^T\!\!\int_{-L}^L (u-u_d)^2 1_{\Omega_o} dx dt.
$$

\subsection{Approximation of a manufactured solution}\label{subsec:manufacsol}

In this section we study how the algorithm behaves when a control $f$ to generate the desired state $u$ is known a priori. To this end we consider initially a smooth control
$$
f(x,t)
\,=\,
\cos(3\pi x)\cos(20\pi t)\,,
$$
and the initial condition
$$
u_0(\x)
\,=\,
1+\cos(\pi x)
\quad\mbox{ and }\quad
v_0(\x)
\,=\,
3+\cos(\pi x)\,,
$$
and we use them to generate the associated dynamics of variables $u$ and $v$, which are presented in Figure~\ref{fig:Manufactured_Exact}. Then, we use the generated $u$ as the desired state $u_d$, that is, we run the simulations using $u_d=u$ and an initial control $f(x,t)=0$. The results are presented in Figure~\ref{fig:Manufactured_Results}, where we observe in the first row the dynamics of the three variables. As expected the dynamics of $u$ achieve the ones in $u_d$ although the dynamics of $f$ and $v$ are different from the manufactured ones. This is not surprising, as there might be many controls $f$ that make the system to be very close to the desired state $u_d$. Moreover, we present in the second row of Figure~\ref{fig:Manufactured_Results} how the functional $J(f)$ and its gradient evolves with respect to the iterations, and how by perturbing the obtained state as $J(u\pm \delta_n)$ we conclude that the achieved state represents a minimum (at least looking at the constant direction).

\begin{figure}[H]
\begin{center}
\includegraphics[width=0.328\textwidth]{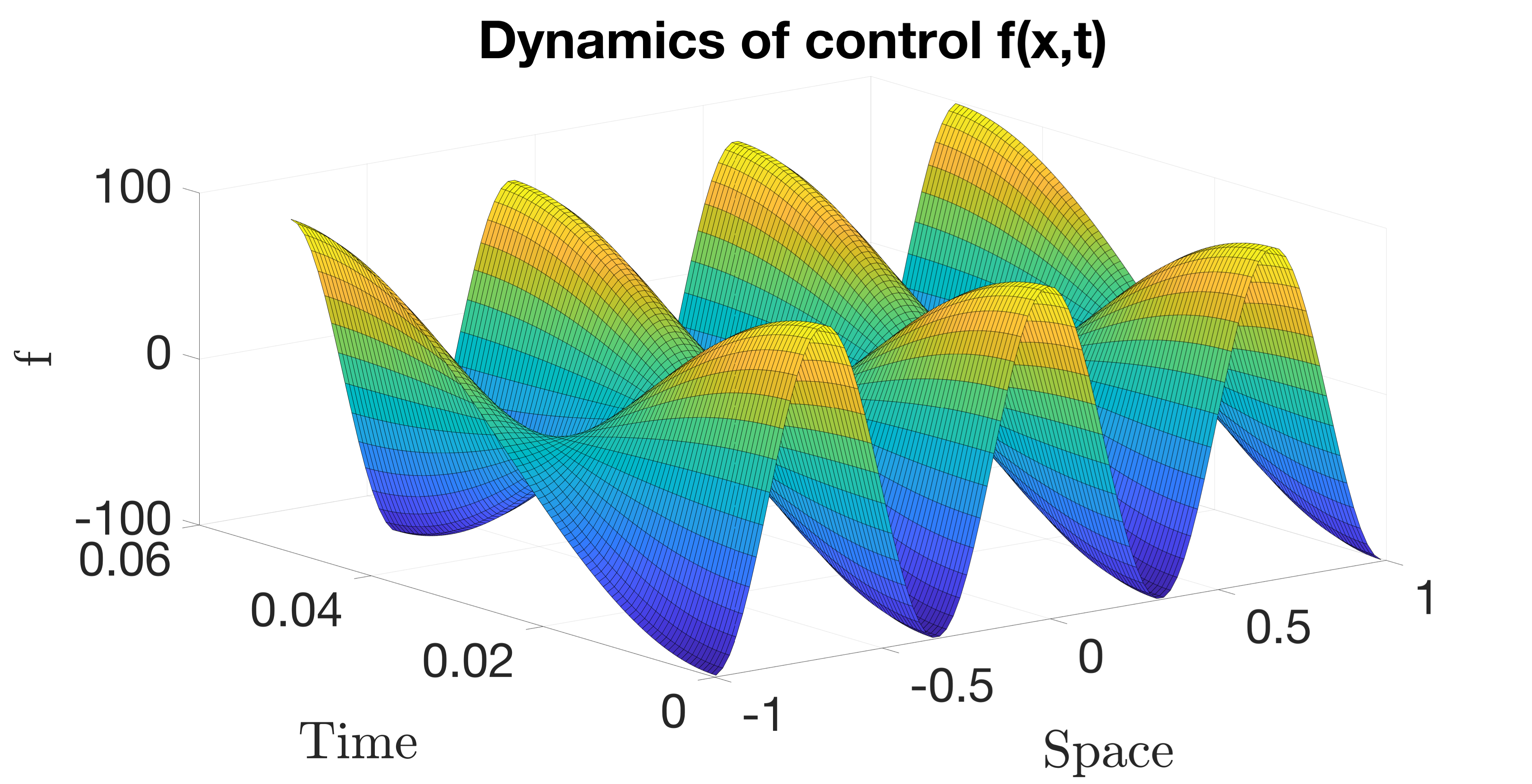}
\includegraphics[width=0.328\textwidth]{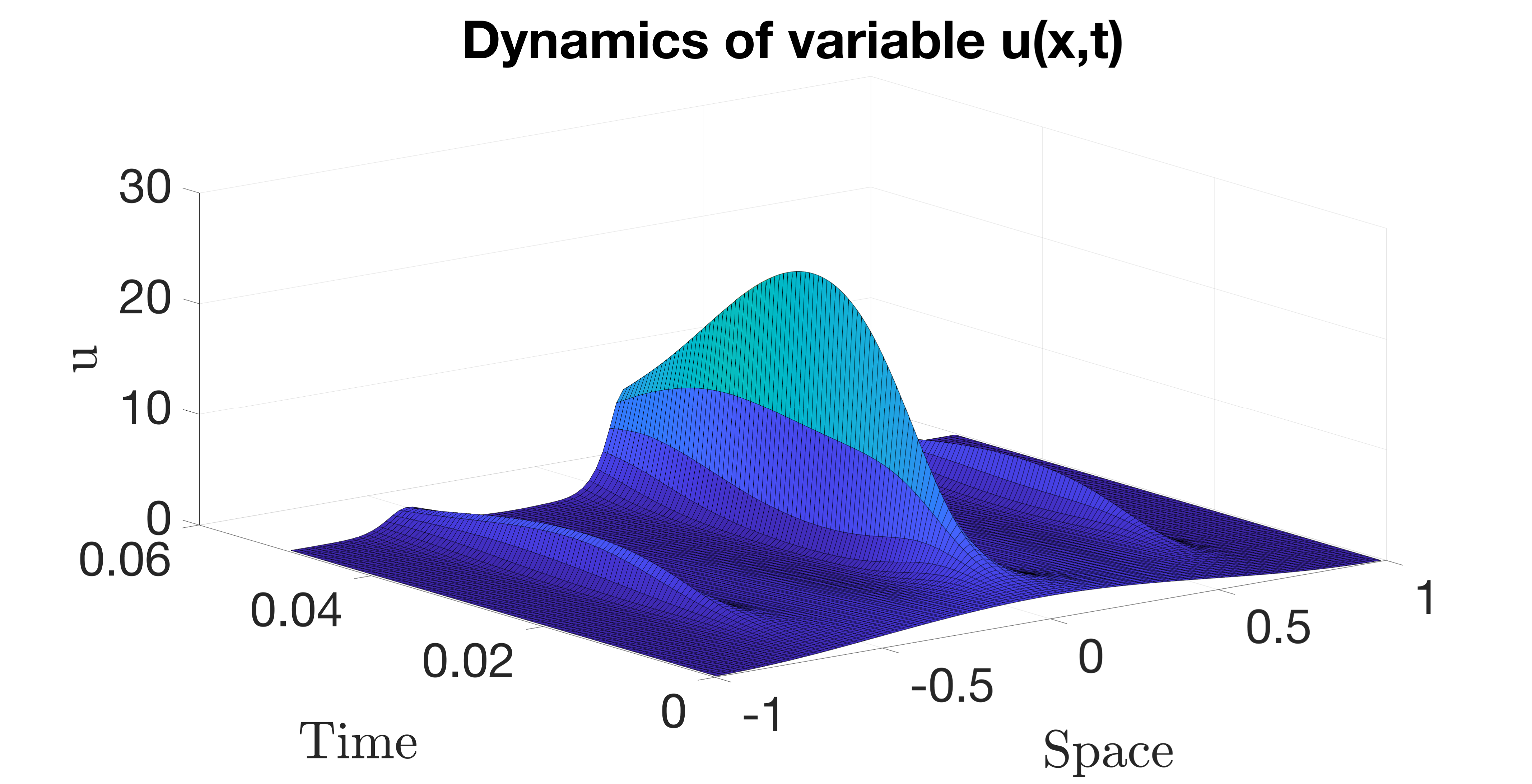}
\includegraphics[width=0.328\textwidth]{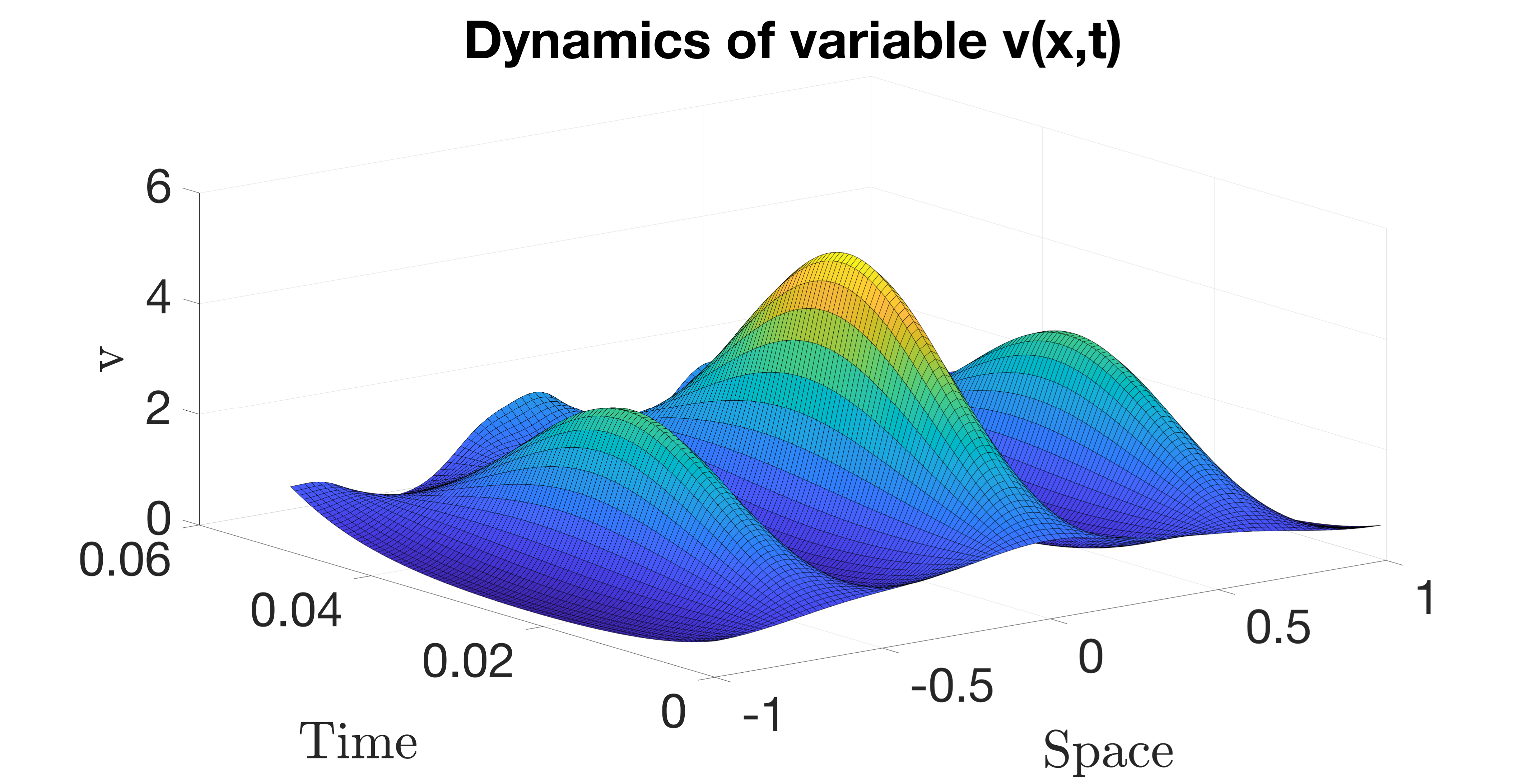}
\caption{Dynamics of the manufactured control $f$ (left) and the associated variables $u$ (center) and variable $v$ (right).}\label{fig:Manufactured_Exact}
\end{center}
\end{figure}

\begin{figure}[H]
\begin{center}
\includegraphics[width=0.328\textwidth]{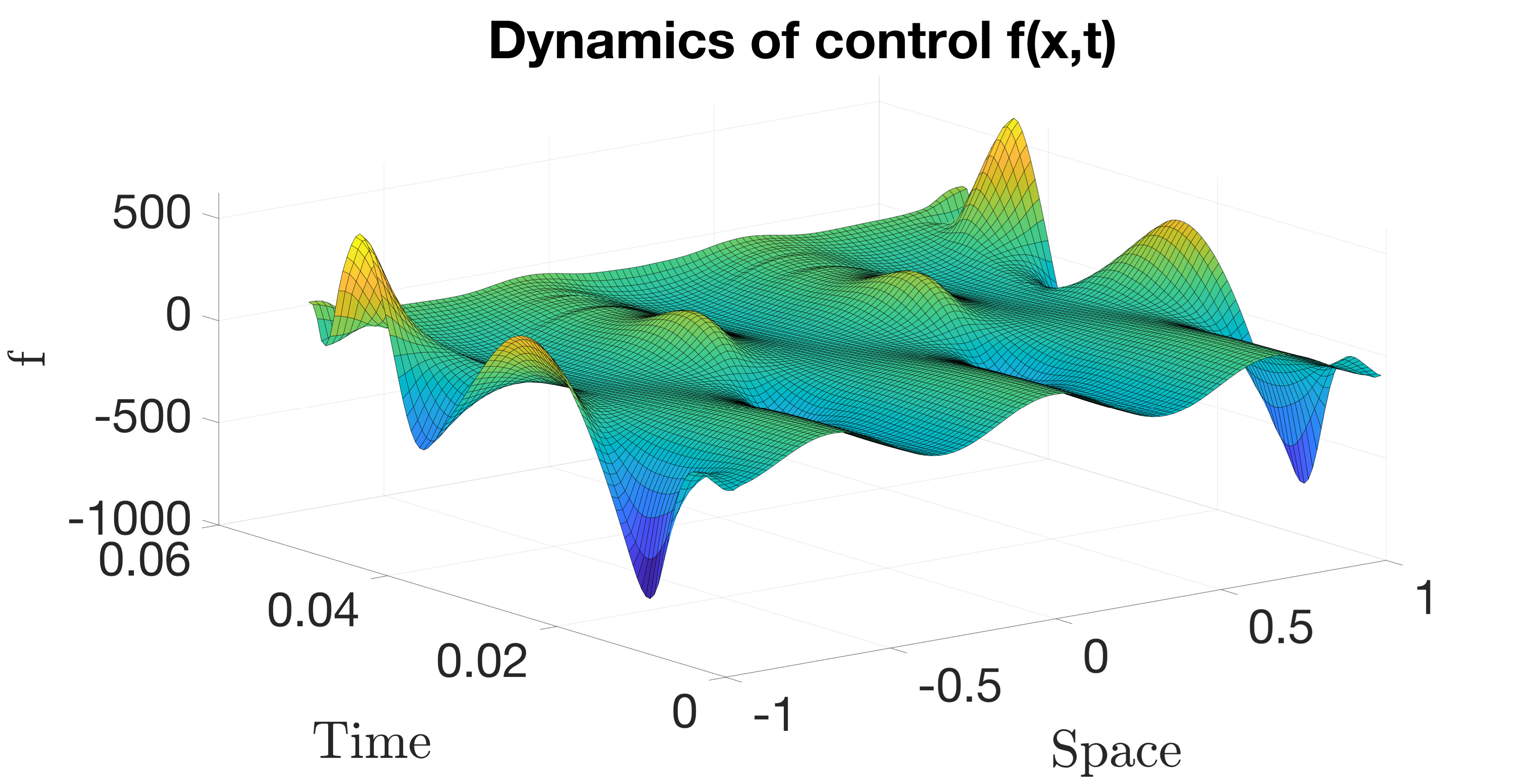}
\includegraphics[width=0.328\textwidth]{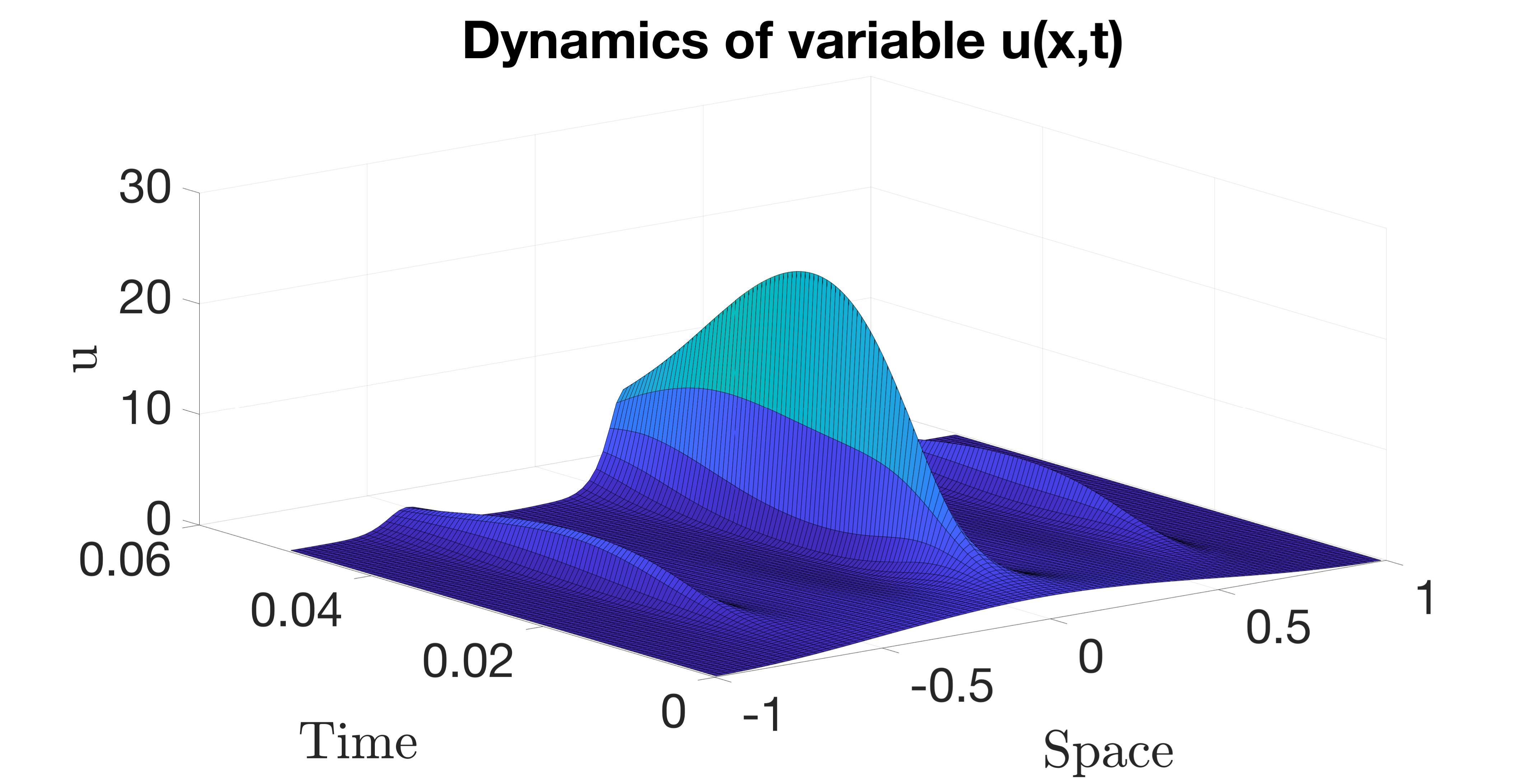}
\includegraphics[width=0.328\textwidth]{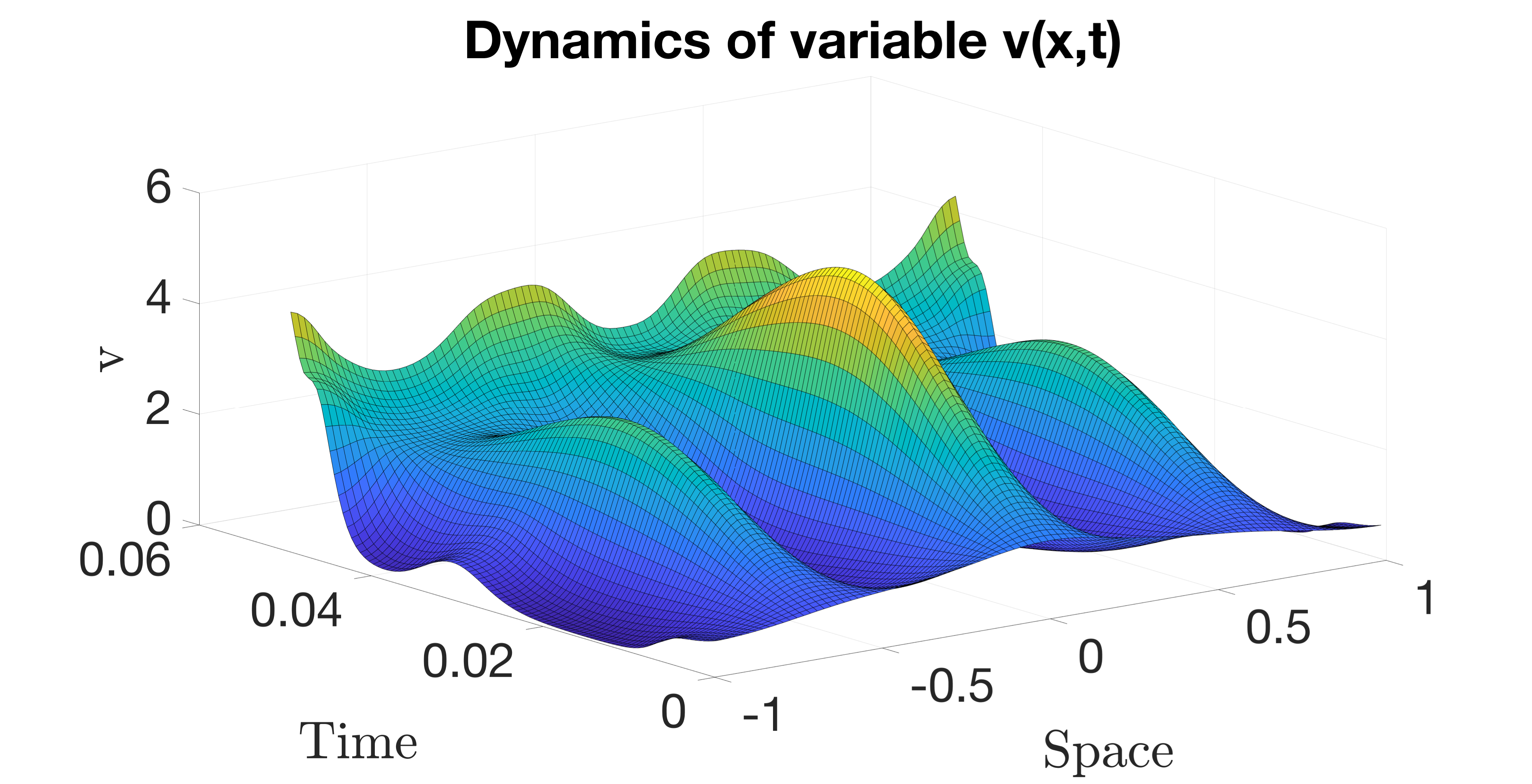}
\\
\includegraphics[width=0.328\textwidth]{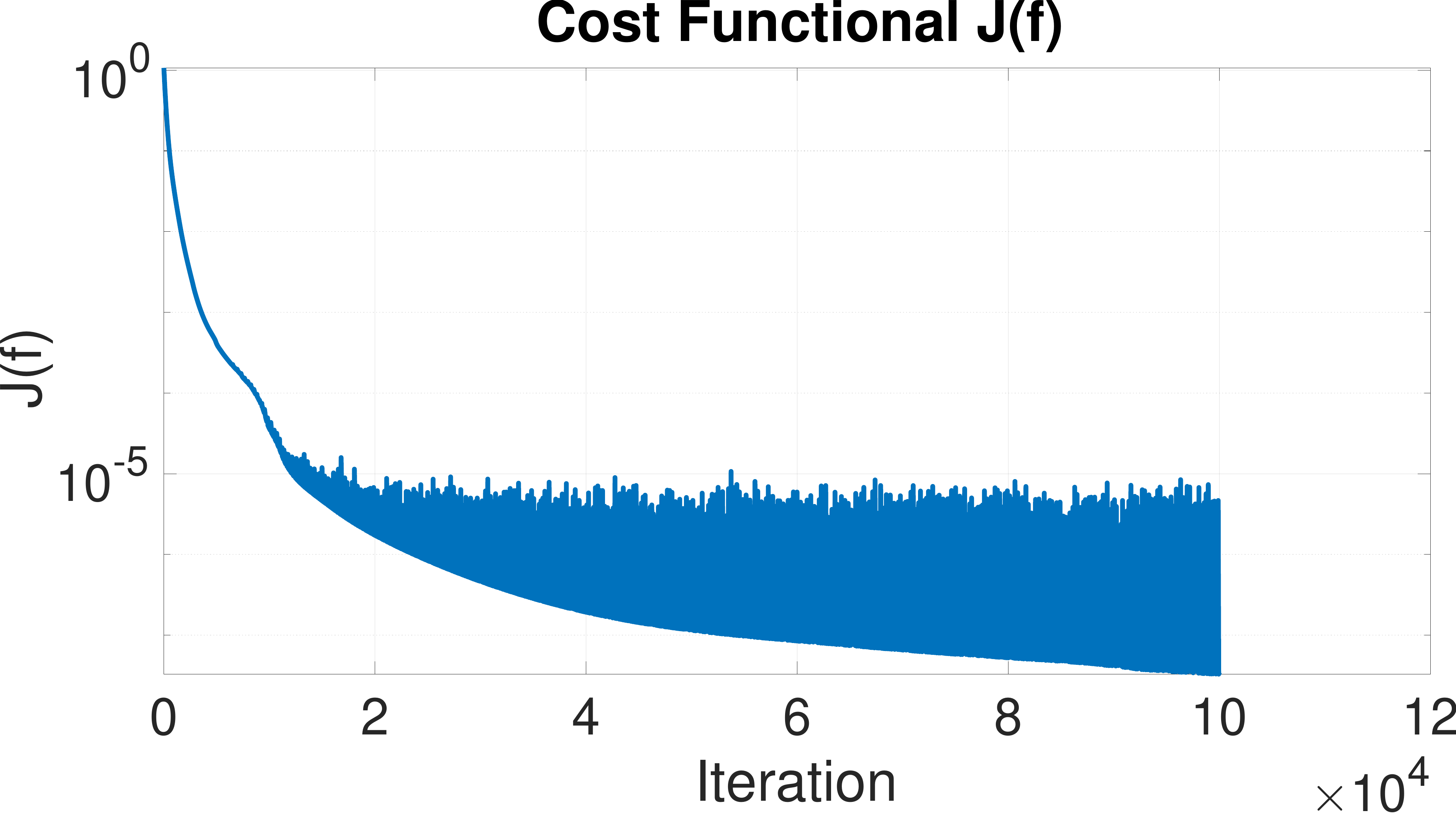}
\includegraphics[width=0.328\textwidth]{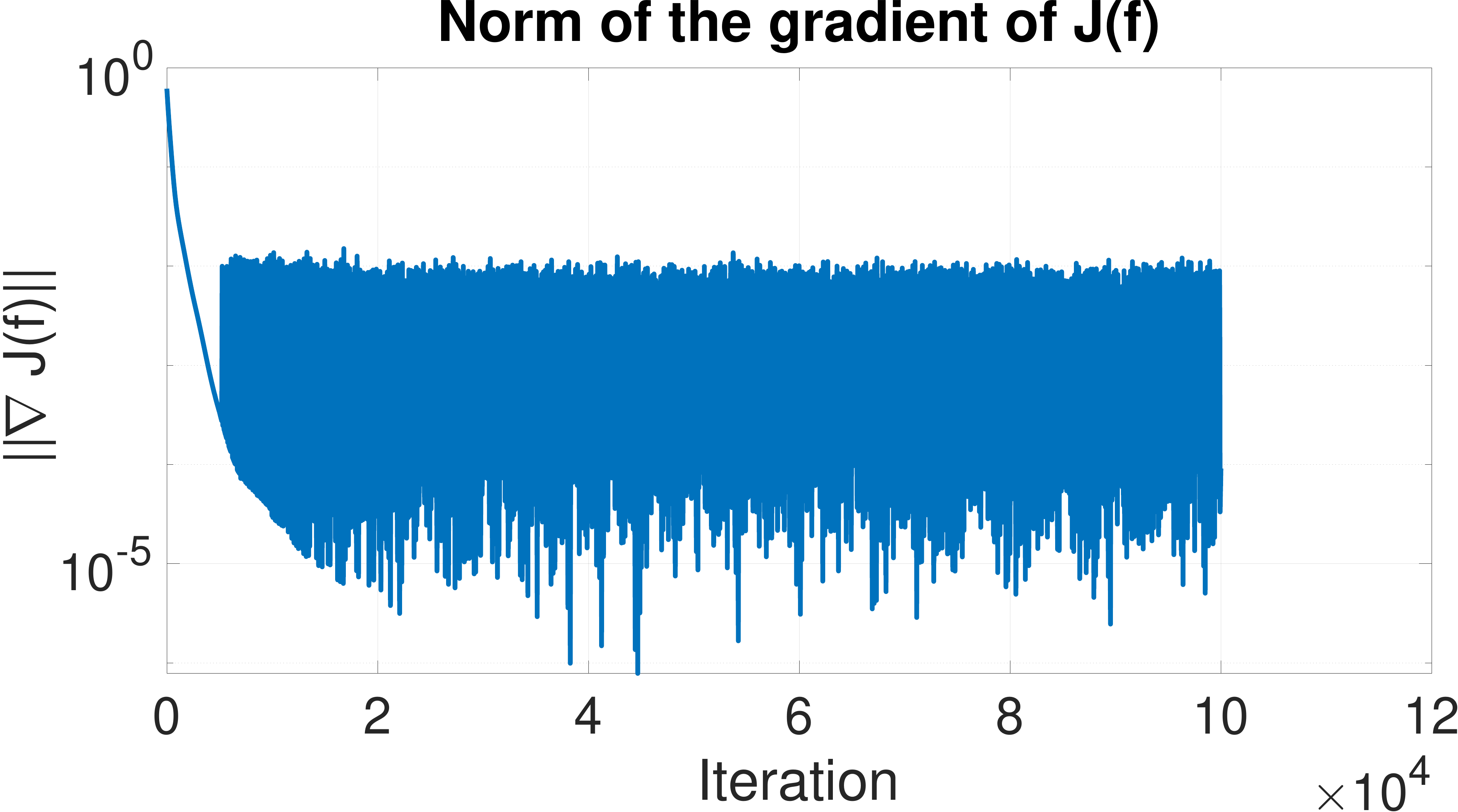}
\includegraphics[width=0.328\textwidth]{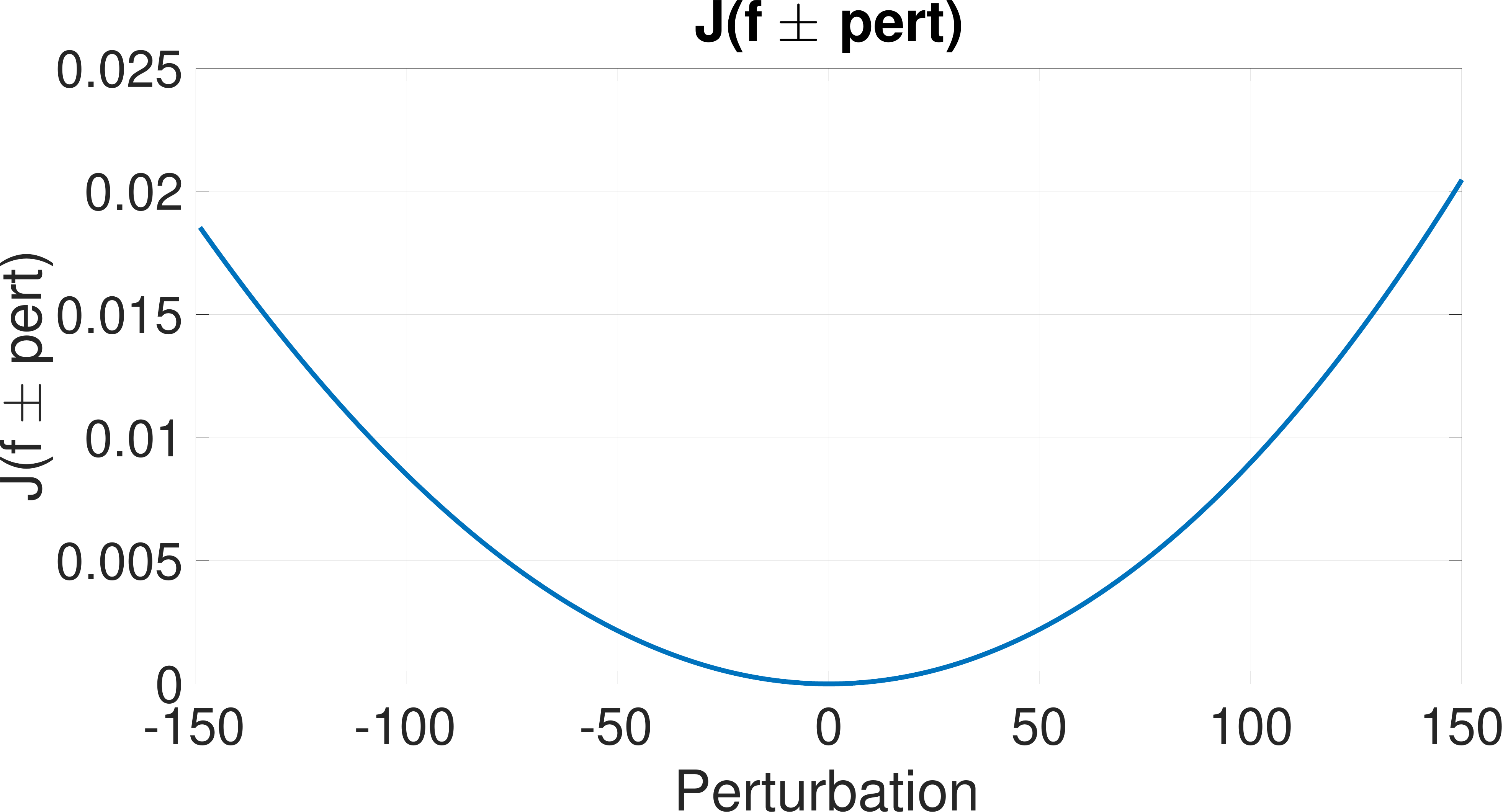}
\caption{Approximation a manufactured solution. Top row: Dynamics of the obtained control $f$ (left) and the associated variables $u$ (center) and variable $v$ (right). Bottom row: Evolution of the cost functional $J(f)$ (left), evolution of $\|\nabla J(f)\|$ (right) and the influence of perturbing the obtained control.}\label{fig:Manufactured_Results}
\end{center}
\end{figure}

\subsection{Bilinear Distributed Control}\label{subsec:simbilinear} 

In this section we study the effect of considering bilinear controls $f(x,t)$. In particular we study how the system behaves when we consider different control and observation domains. In all the cases the initial conditions are
$$
u_0(\x)
\,=\,
1+\cos(\pi x)
\quad\mbox{ and }\quad
v_0(\x)
\,=\,
3+\cos(\pi x)\,.
$$
Moreover, we take as   target state  the constant value $u_d=\oint_\Omega u_0dx=1$ and we consider $f(x,t)=0$ as the initial control in the minimization process. In order to illustrate that the bilinear control has a strong effect in the behavior of the system, we present in Figure~\ref{fig:Bilinearcase0} the dynamics of variables $u$ and $v$ when we compute the solution without imposing any control acting on the system.

\begin{figure}[H]
\begin{center}
\includegraphics[width=0.49\textwidth]{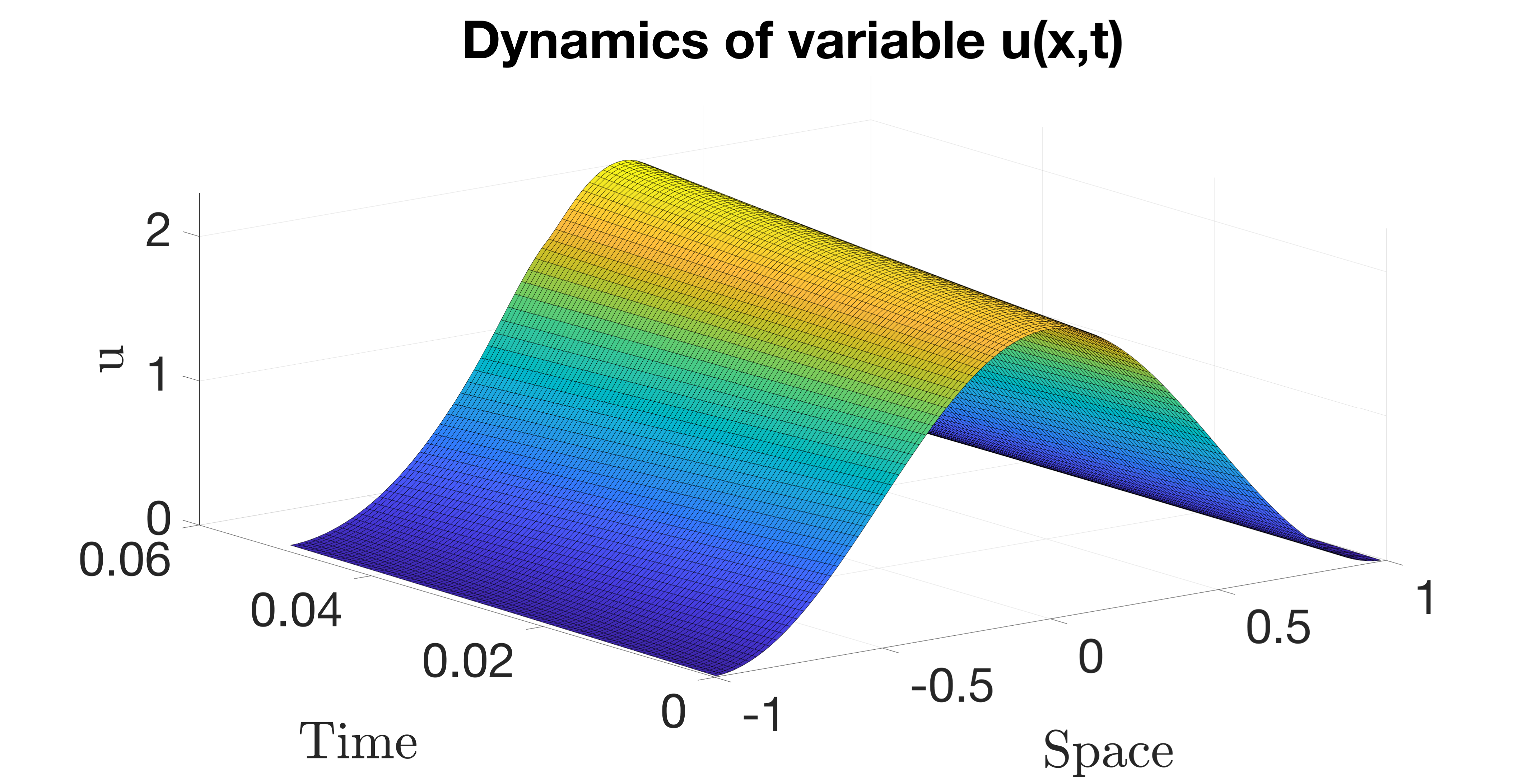}
\includegraphics[width=0.49\textwidth]{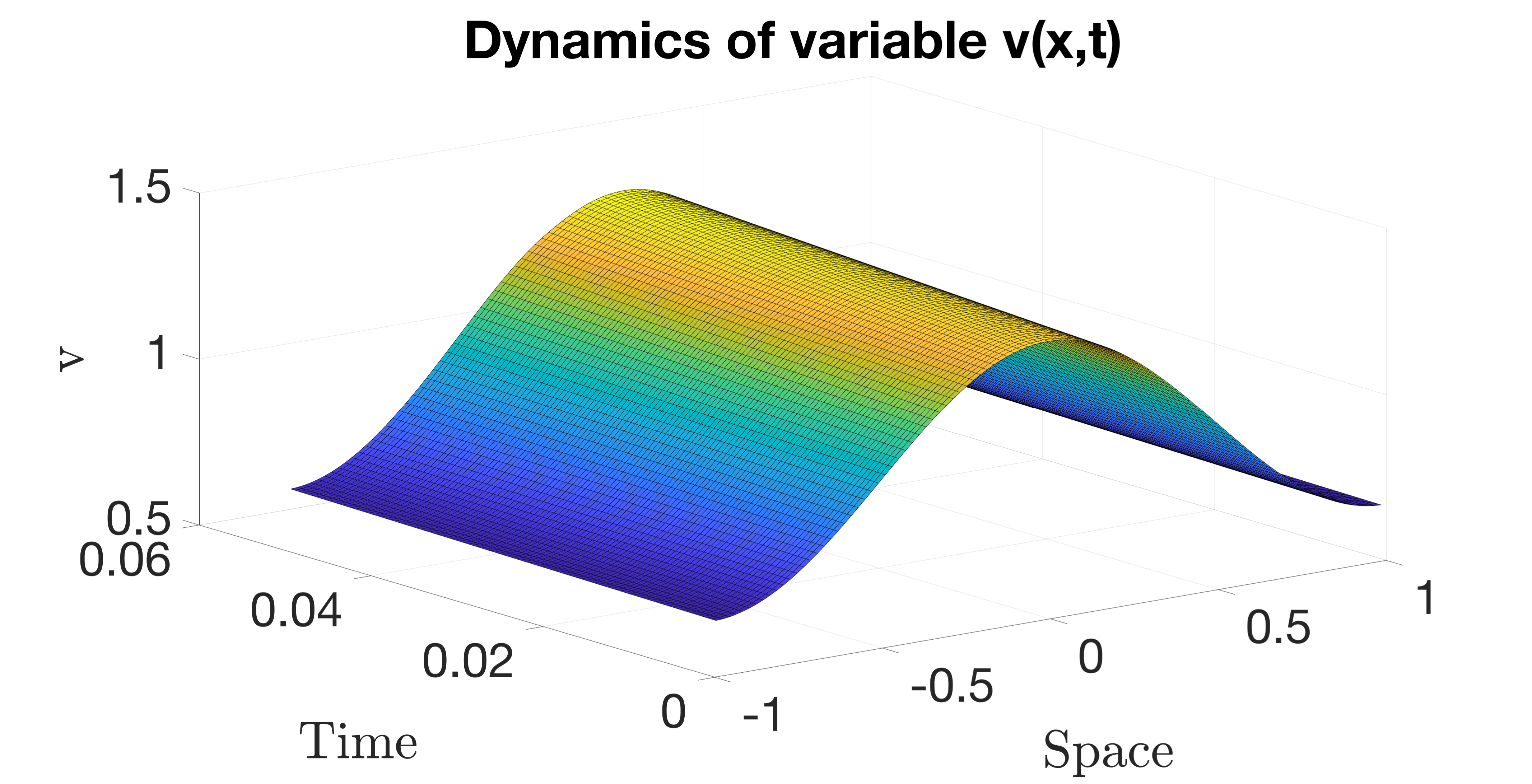}
\caption{Dynamics of variables $u$ (left) and $v$ (right) when no control is acting on the system.}\label{fig:Bilinearcase0}
\end{center}
\end{figure}

\subsubsection{Case 1. $\Omega_c=\Omega_o=\Omega$}\label{subsec:BDCcase1}

In this case, we consider both subdomains, of control and observation, to be equal to the whole spatial domain $\Omega=(-1,1)$. The results are presented in Figure~\ref{fig:case1}, where we can observe how the control acts mainly at the beginning of the time,  producing a symmetric behavior of the variable $v$, with big influence close to the endpoints of the spatial interval, which leads to some spikes close to the endpoints for variable $u$ (which are not numerical instabilities, in fact we have run the same simulations with lower discretization parameters, both in time and space, and we obtain the same dynamics). As a result, the obtained control is able to make variable $u$ really close to $u_d$  from the time $t=0.025$ to the final time. Moreover we see how the functional $J(f)$ is always decreasing with respect to the Adam's iterations and how the norm of the gradient, although oscillating, has a decreasing trend, achieving the desired tolerance before reaching the maximum number of iterations. Finally, we also present a perturbation of the obtained control to evidence that it corresponds with a local minimum of the functional.

\begin{figure}[H]
\begin{center}
\includegraphics[width=0.328\textwidth]{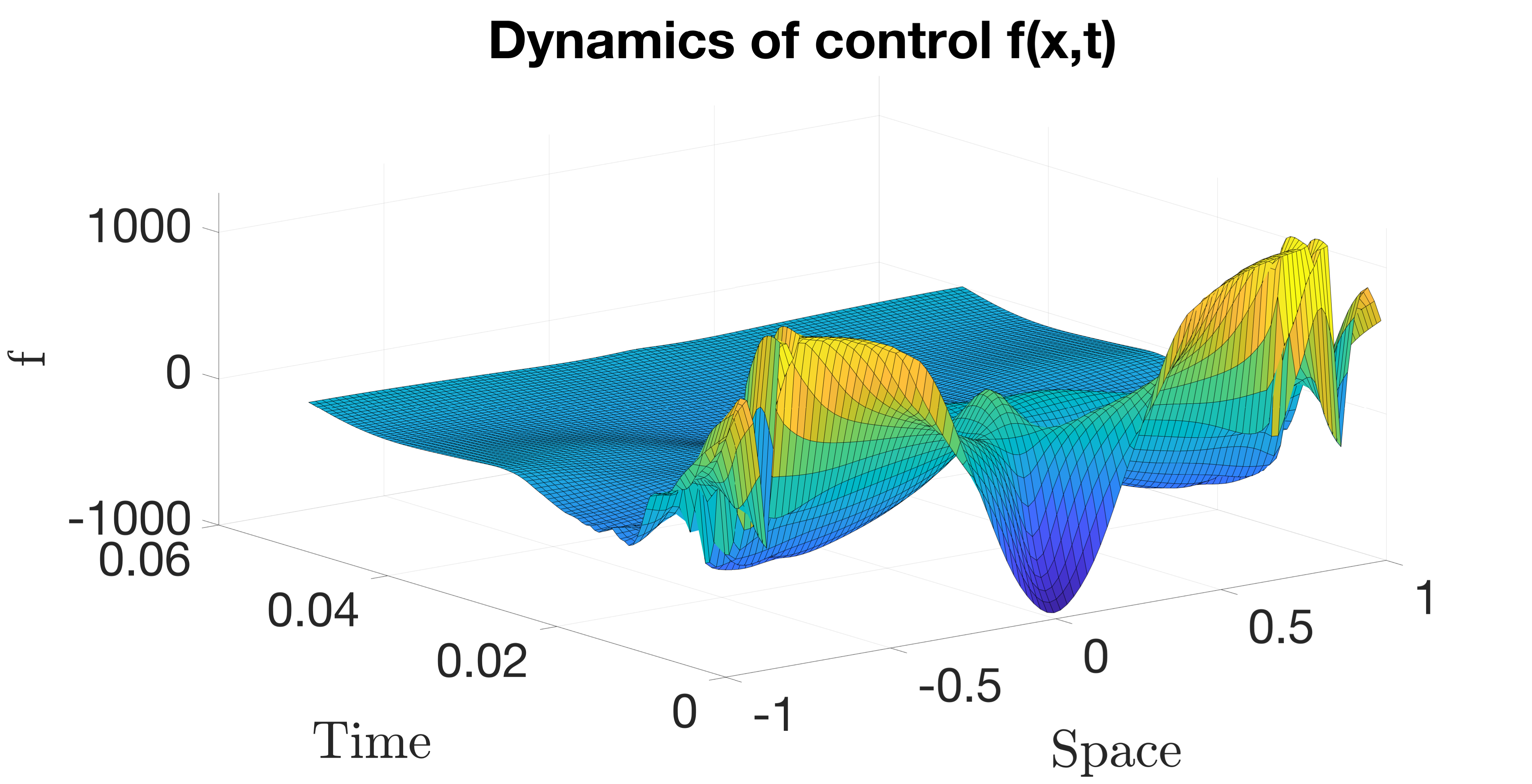}
\includegraphics[width=0.328\textwidth]{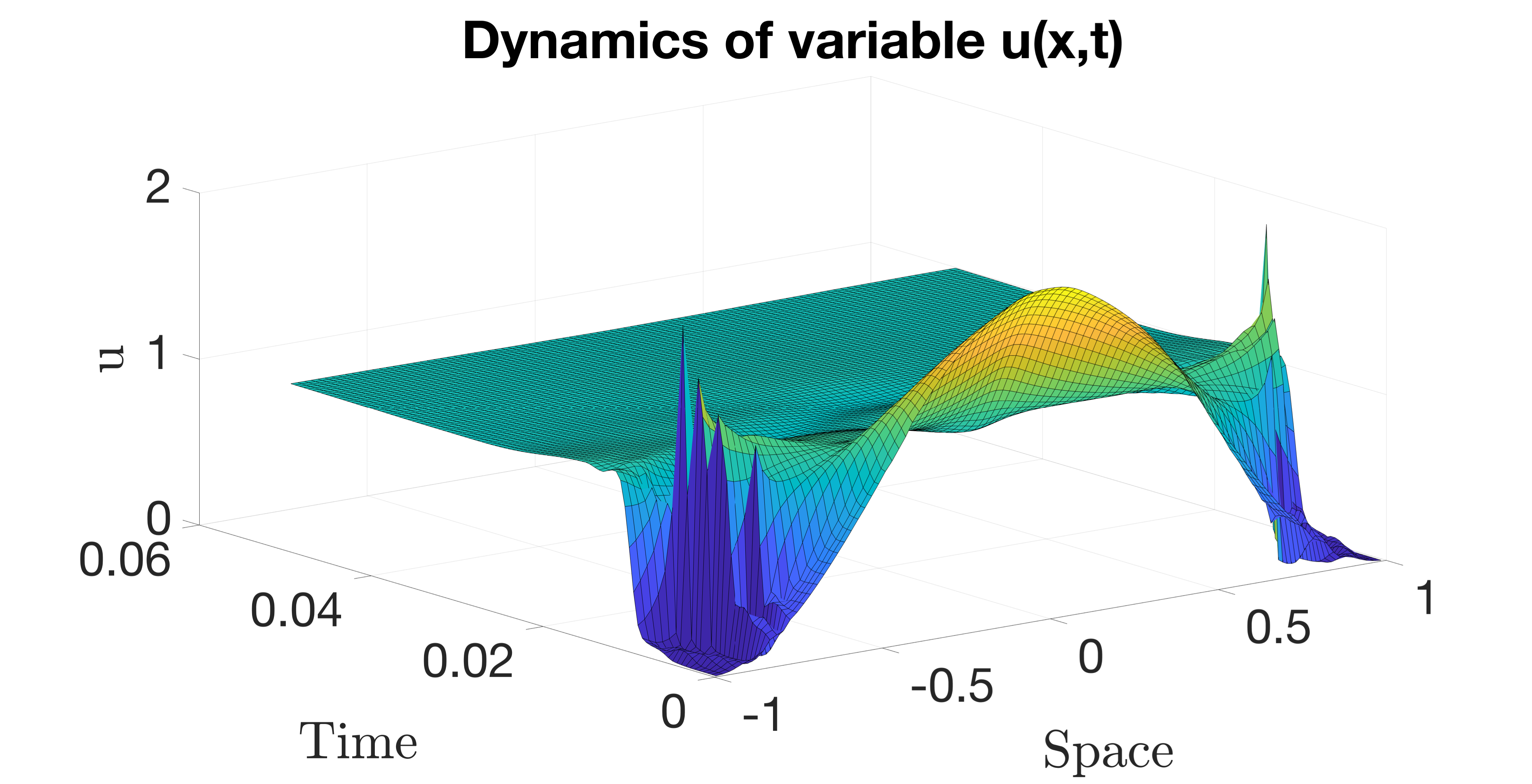}
\includegraphics[width=0.328\textwidth]{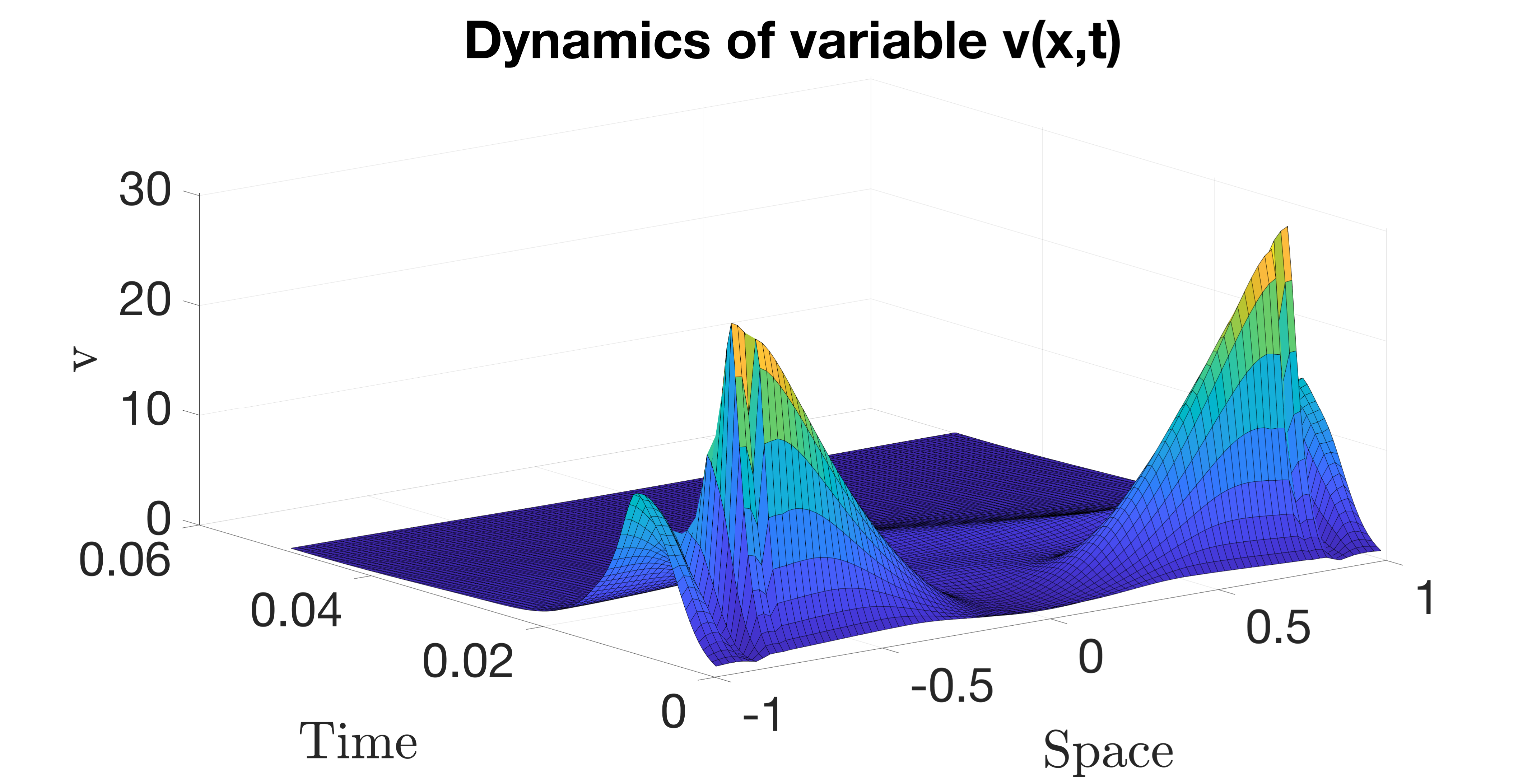}
\\
\includegraphics[width=0.328\textwidth]{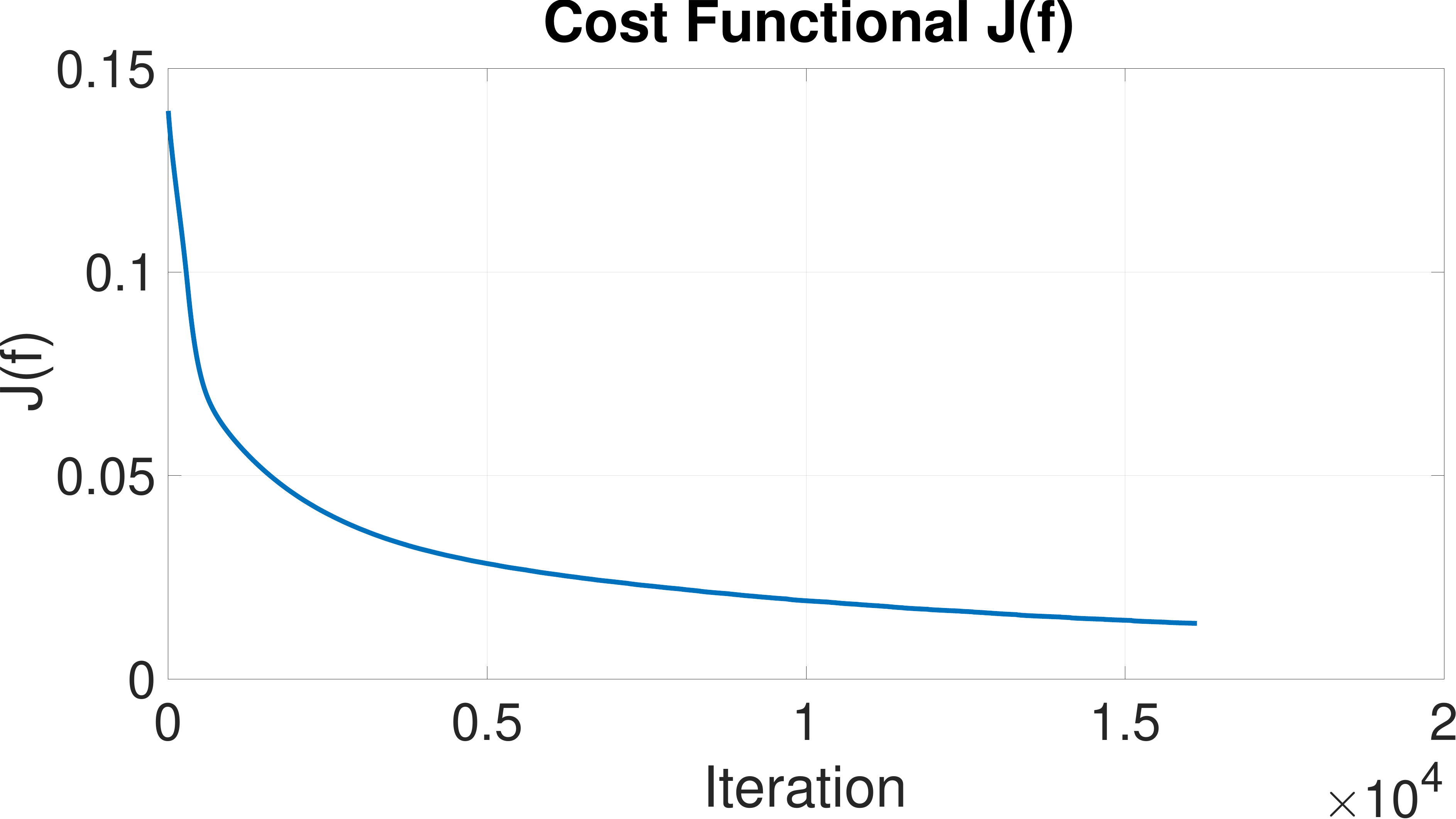}
\includegraphics[width=0.328\textwidth]{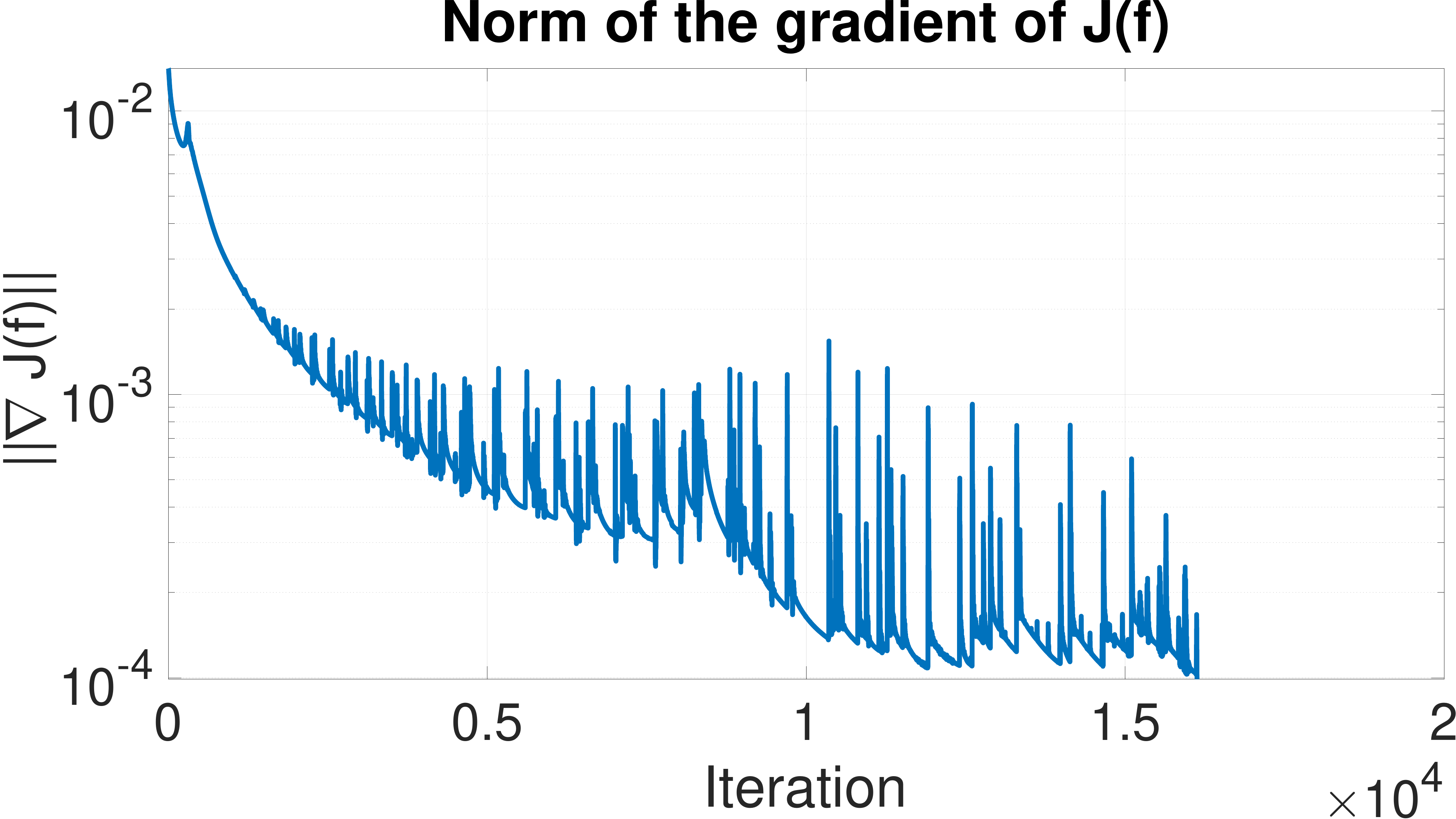}
\includegraphics[width=0.328\textwidth]{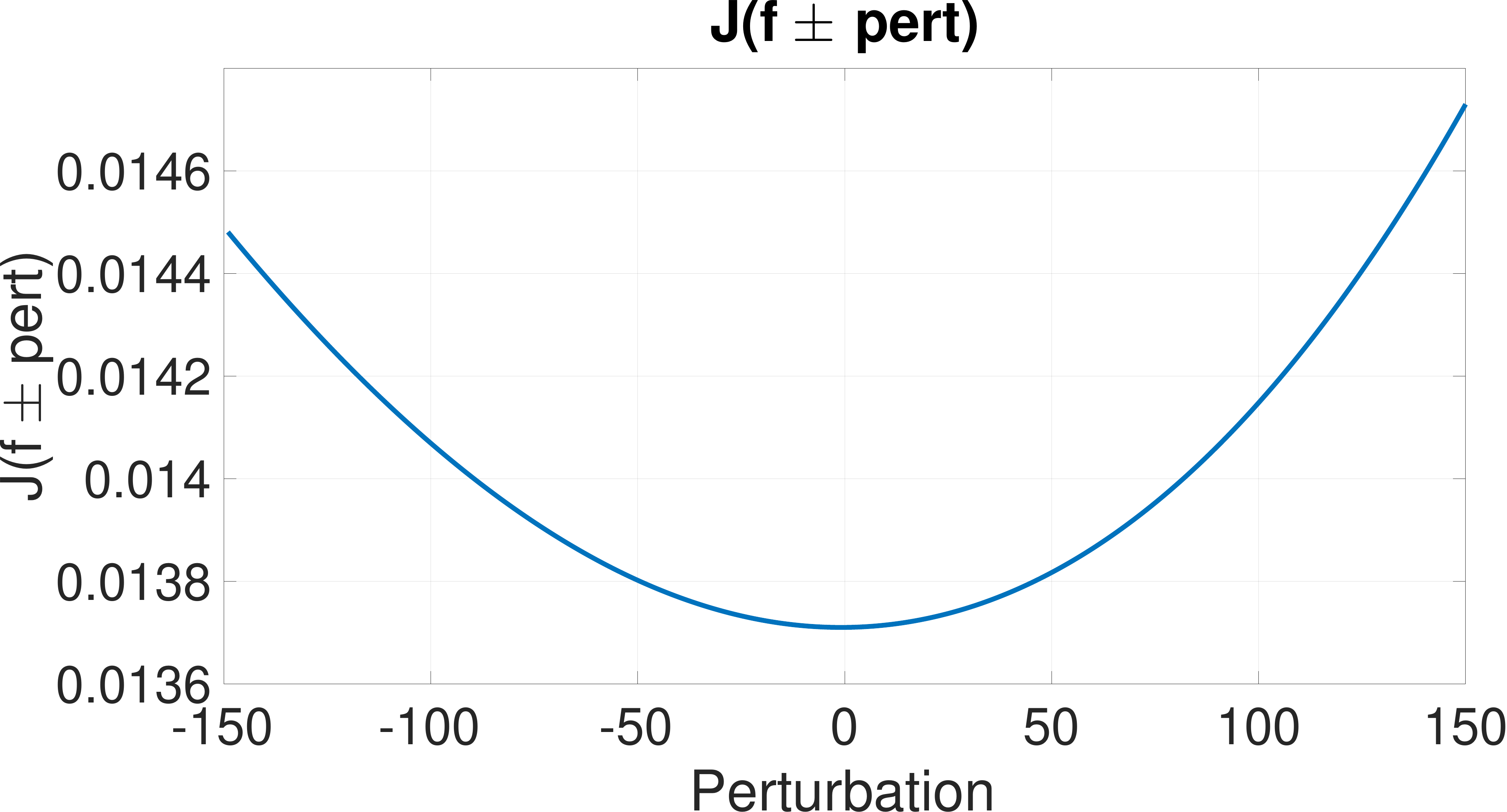}
\caption{Results for $\Omega_c=[-1,1]$ and $\Omega_o=[-1,1]$. Top row: Dynamics of control $f$ (left) and the associated variables $u$ (center) and variable $v$ (right). Bottom row: Evolution of the cost functional $J(f)$ (left), evolution of $\|\nabla J(f)\|$ (center) and the influence of perturbing the obtained control (right).}\label{fig:case1}
\end{center}
\end{figure}

\subsubsection{Case 2. $\Omega_c=[-0.5,0.5]$ and $\Omega_o=[-1,1]$}

In this case, the observation domain is the whole spatial domain but we consider the control domain as the central part of the spatial domain. In Figure~\ref{fig:case2} we observe how the control acts again mainly at the beginning, producing a symmetric behavior of the variable $v$, with big influence close to the endpoints of the control interval, which leads to an increase of $u$ close to the endpoints of $\Omega_c$. In this case, the obtained control is not able to make variable $u$ really close to $u_d$ in the whole observation domain, but it is able to make it very close in $\Omega_c$ in almost the whole time interval (in fact, at time $t=0.05$, variable $u$ is really close to $u_d$ in a domain slightly larger than $\Omega_d$). Additionally, although functional $J(f)$ is always decreasing with respect to the iterations, the norm of the gradient doesn't have a decreasing trend, reaching the maximum number of iterations before achieving the desired tolerance. Also, by perturbing the obtained control we deduce that it corresponds with a local minimum of the functional $J$.

\begin{figure}[H]
\begin{center}
\includegraphics[width=0.328\textwidth]{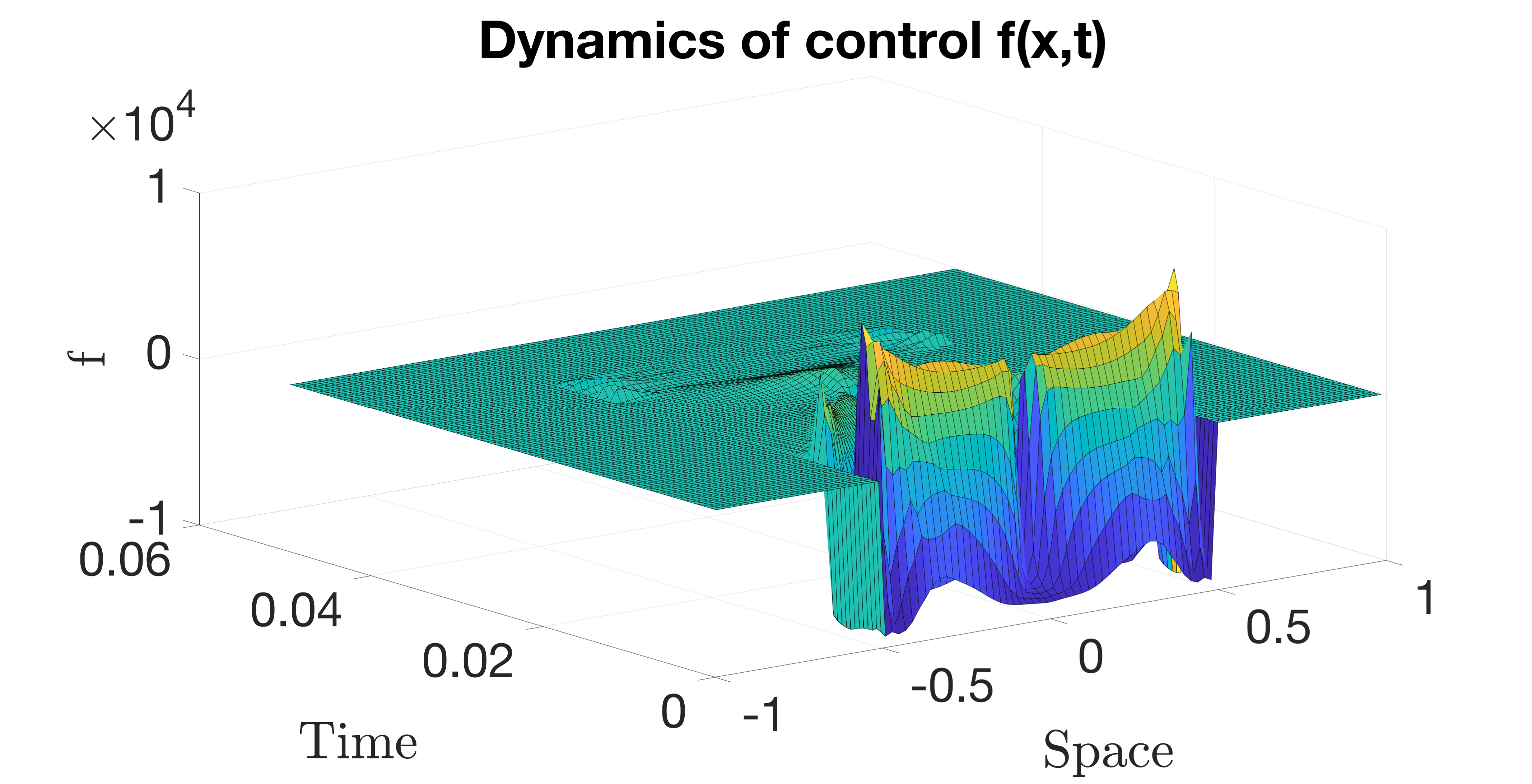}
\includegraphics[width=0.328\textwidth]{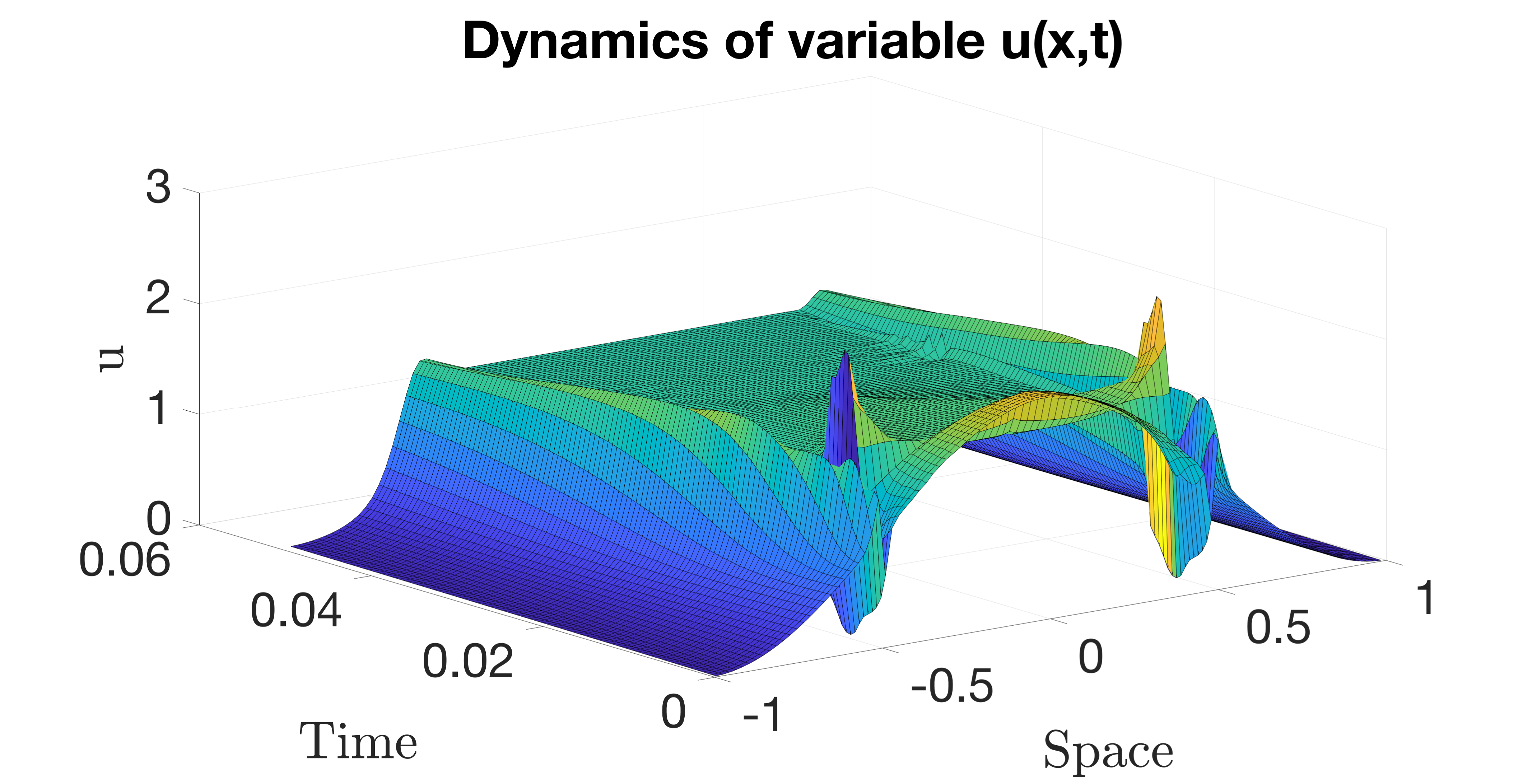}
\includegraphics[width=0.328\textwidth]{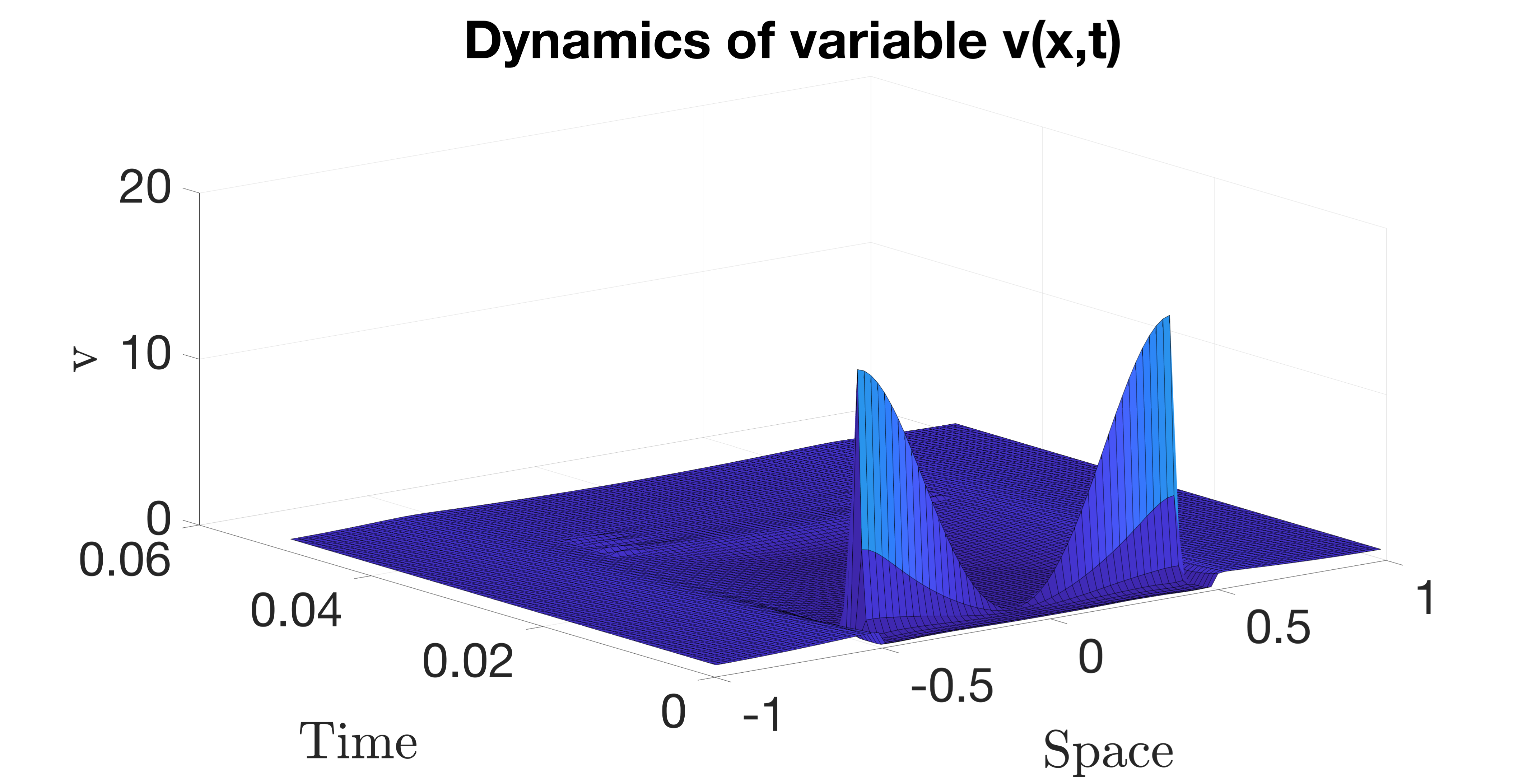}
\\
\includegraphics[width=0.328\textwidth]{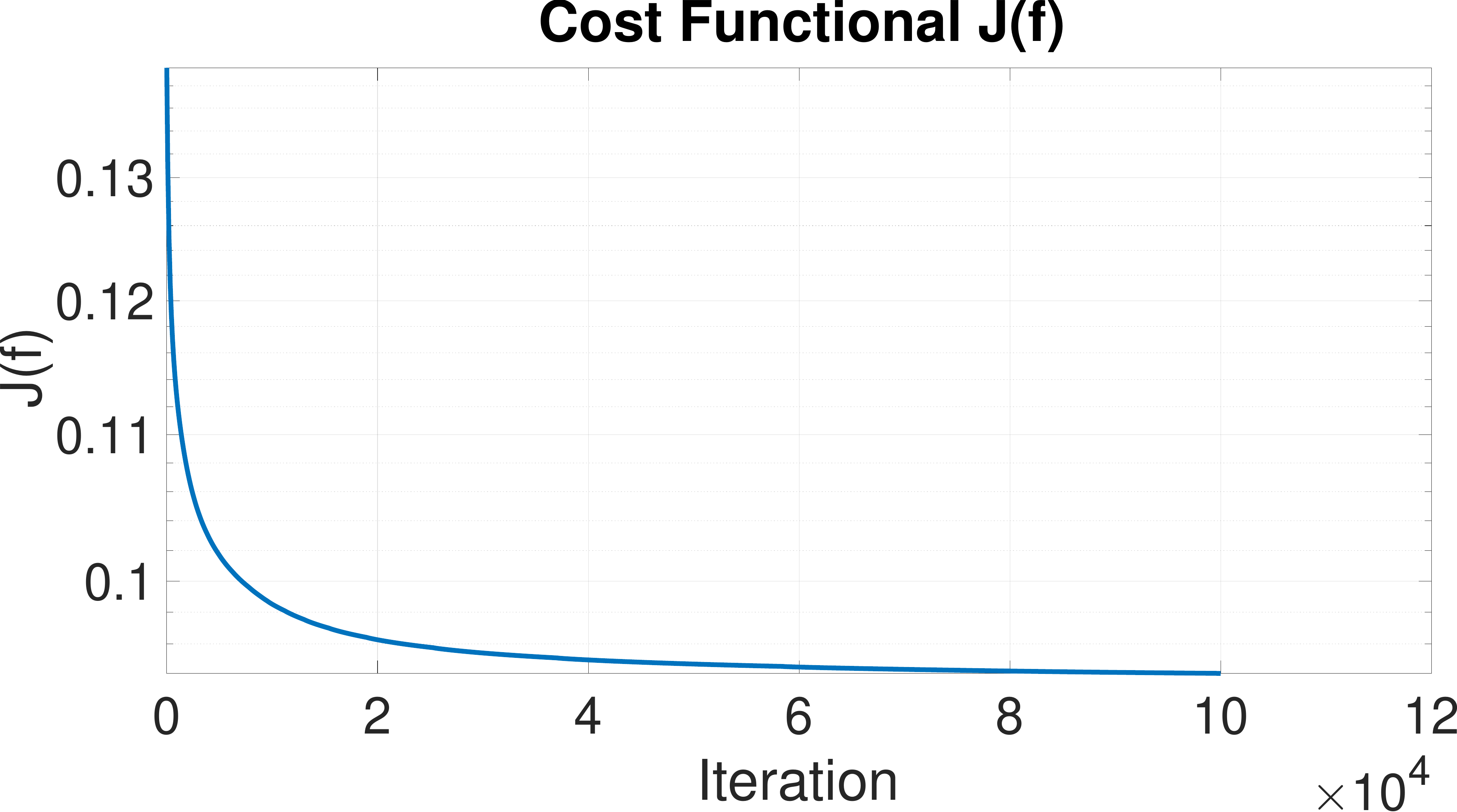}
\includegraphics[width=0.328\textwidth]{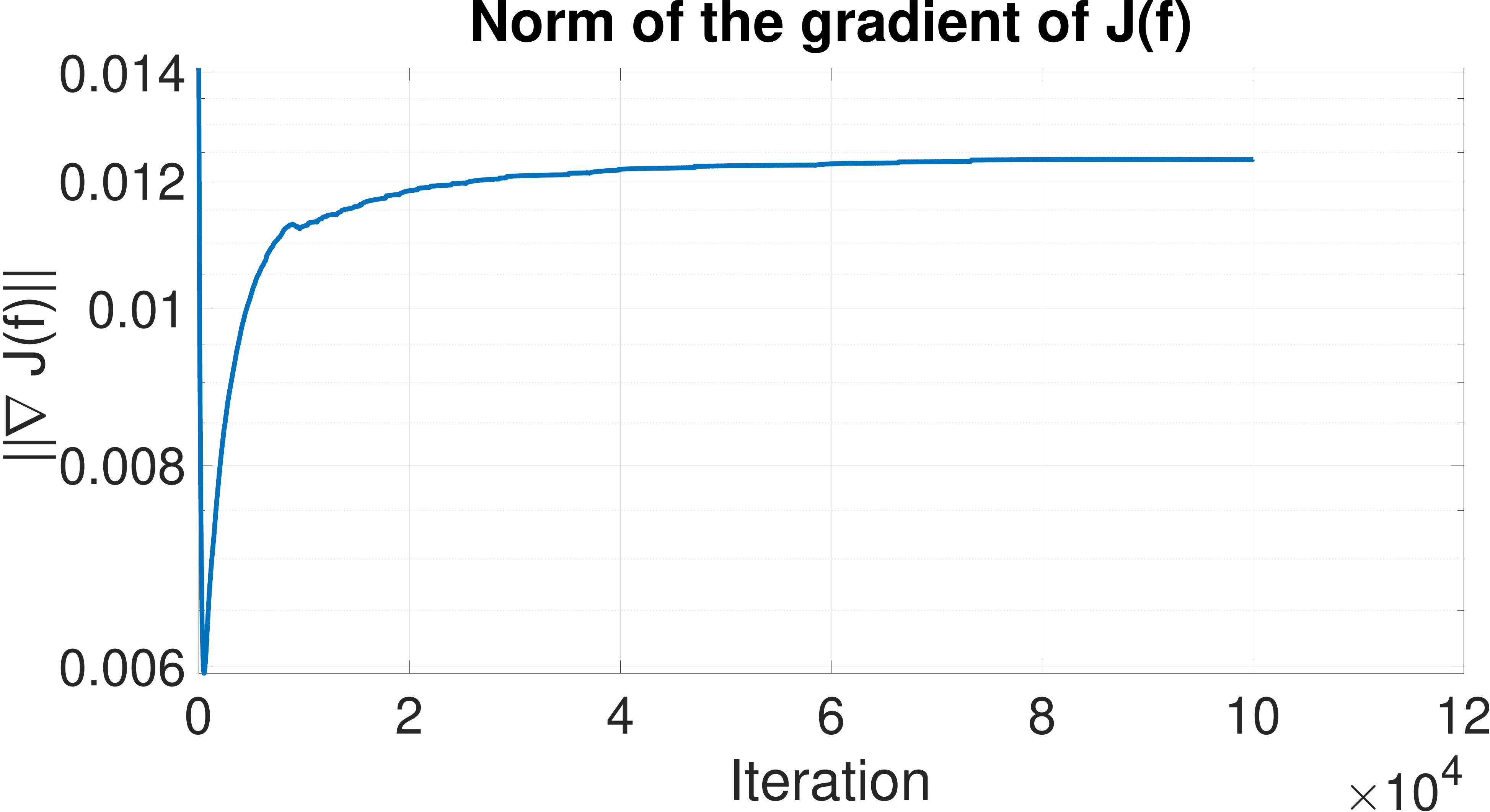}
\includegraphics[width=0.328\textwidth]{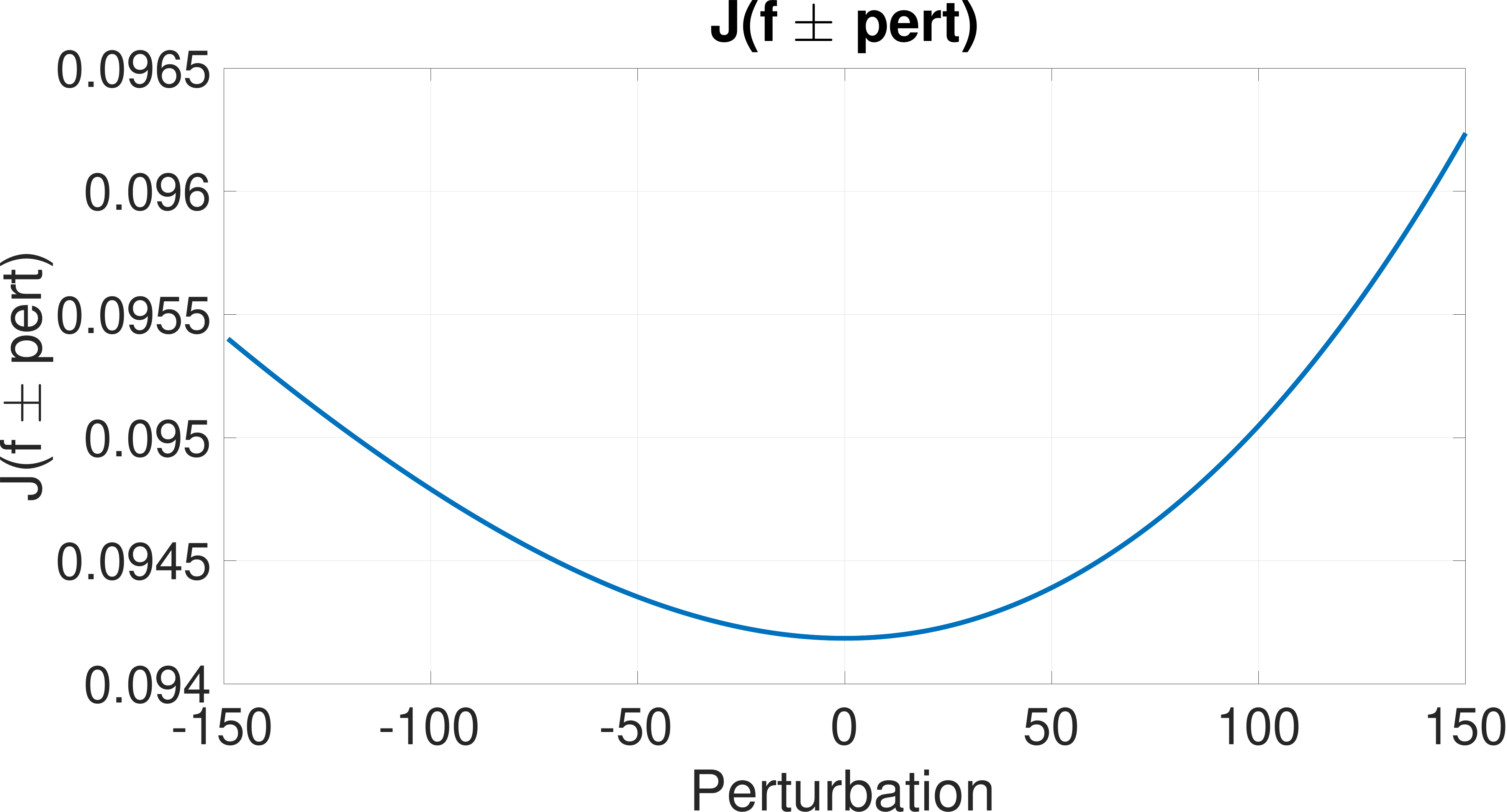}
\caption{Results for $\Omega_c=[-0.5,0.5]$ and $\Omega_o=[-1,1]$. Top row: Dynamics of control $f$ (left) and the associated variables $u$ (center) and variable $v$ (right). Bottom row: Evolution of the cost functional $J(f)$ (left), evolution of $\|\nabla J(f)\|$ (center) and the influence of perturbing the obtained control (right).}\label{fig:case2}
\end{center}
\end{figure}

\subsubsection{Case 3. $\Omega_c=[-1,1]$ and $\Omega_o=[-0.5,0.5]$}

We focus now on the case where the control domain is the whole spatial domain but the observation domain is the central part of the spatial domain. We present the results in Figure~\ref{fig:case3}, where we observe how the control activity is concentrated at the beginning of the time and in the spatial region around the observation domain, producing a symmetric behavior of the variable $v$, with big influence close to the endpoints of the observation interval, which leads to $u$  very close to $u_d$ in $\Omega_o$, in almost the whole time interval. In fact, this is possible at the 'expense' of significantly increasing the value of $u$ on the regions next but outside of the observation domain. Functional $J(f)$ and the norm of its gradient are decreasing  with respect to the iterations (to be precise, $\|\nabla J (f)\|$ exhibits some small oscillations), reaching the desired tolerance before the maximum number of iterations is satisfied. Moreover, by studying the value of $J$ for perturbations of the obtained  control we can deduce that it corresponds with a local minimum of the functional $J$.

\begin{figure}[H]
\begin{center}
\includegraphics[width=0.328\textwidth]{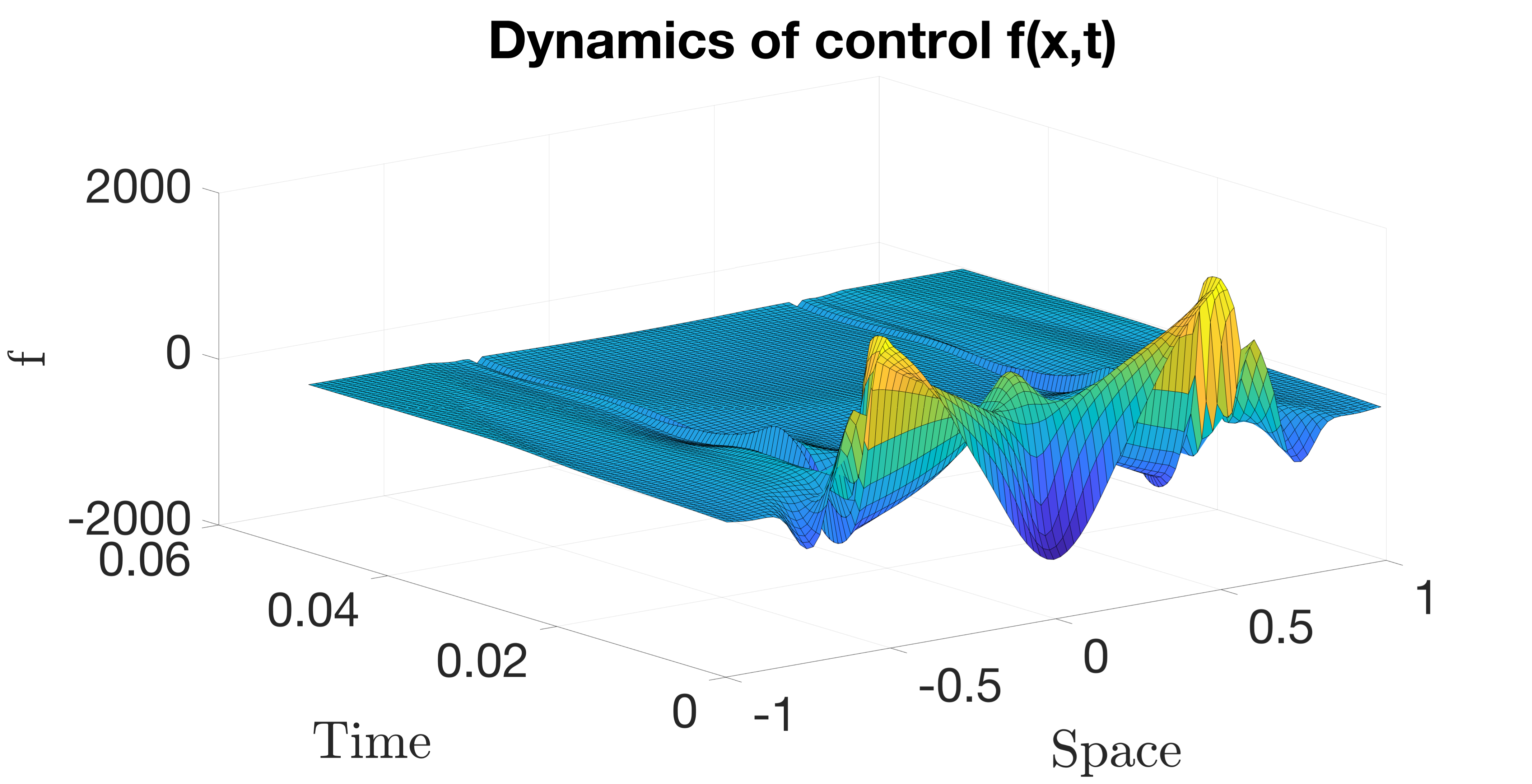}
\includegraphics[width=0.328\textwidth]{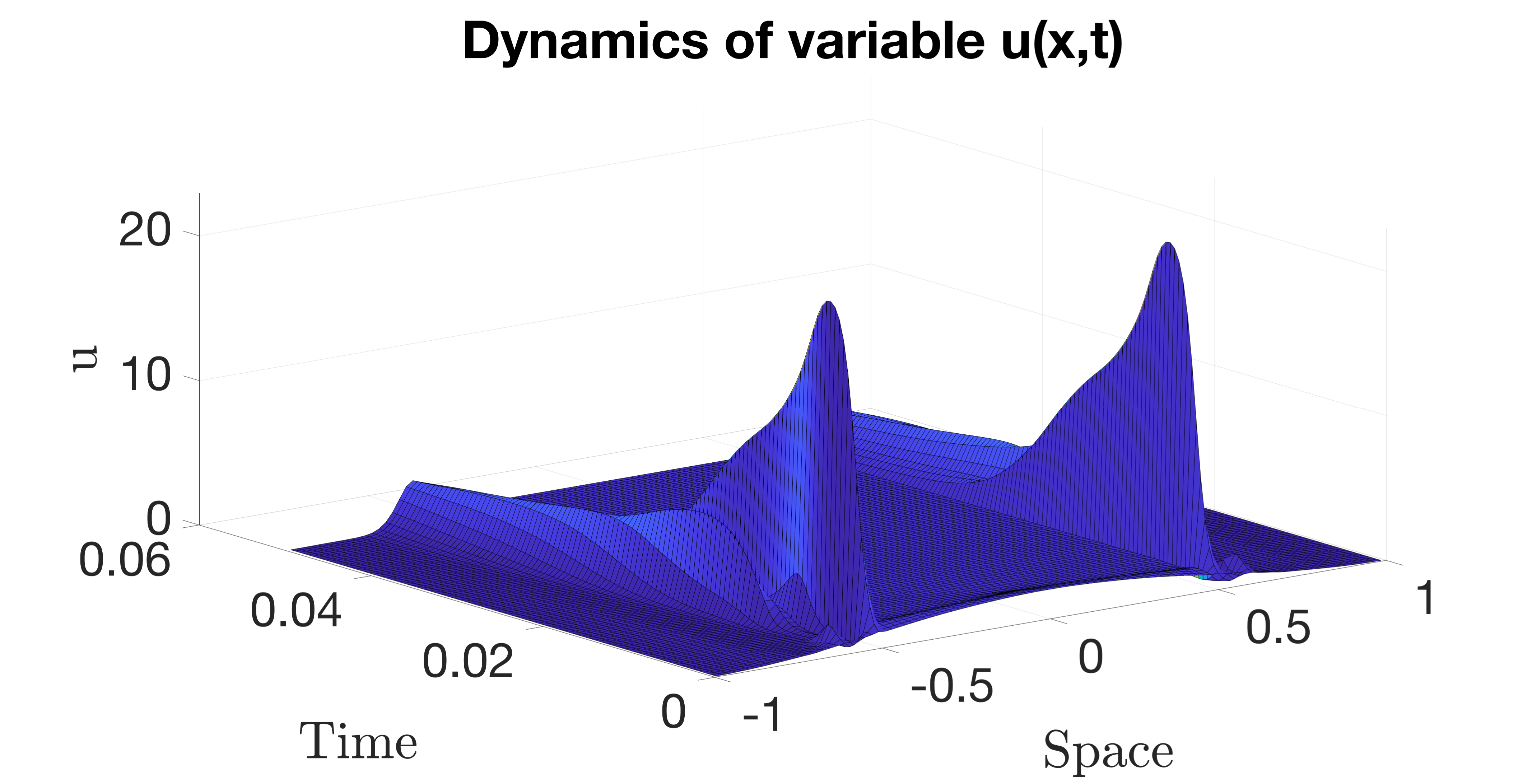}
\includegraphics[width=0.328\textwidth]{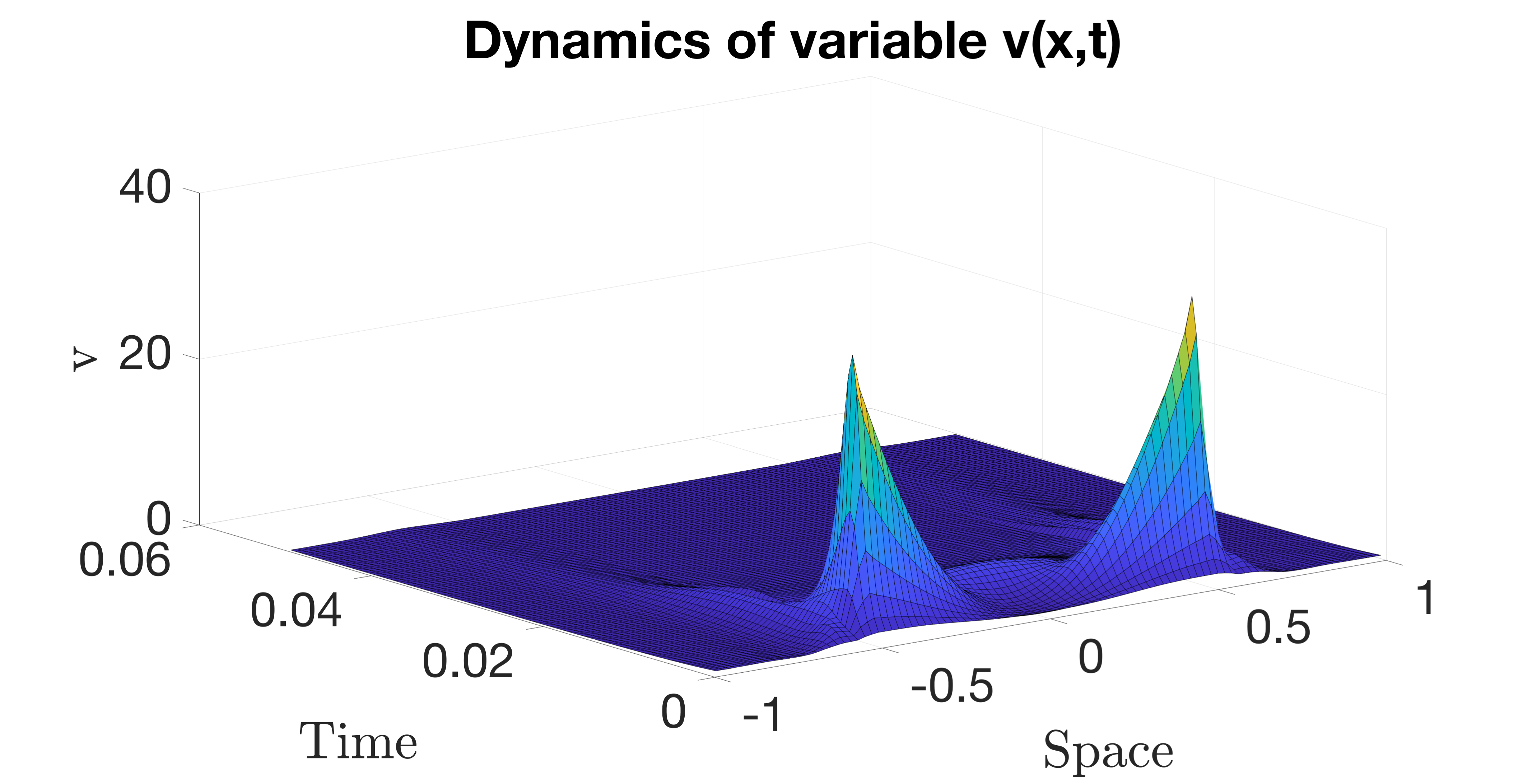}
\\
\includegraphics[width=0.328\textwidth]{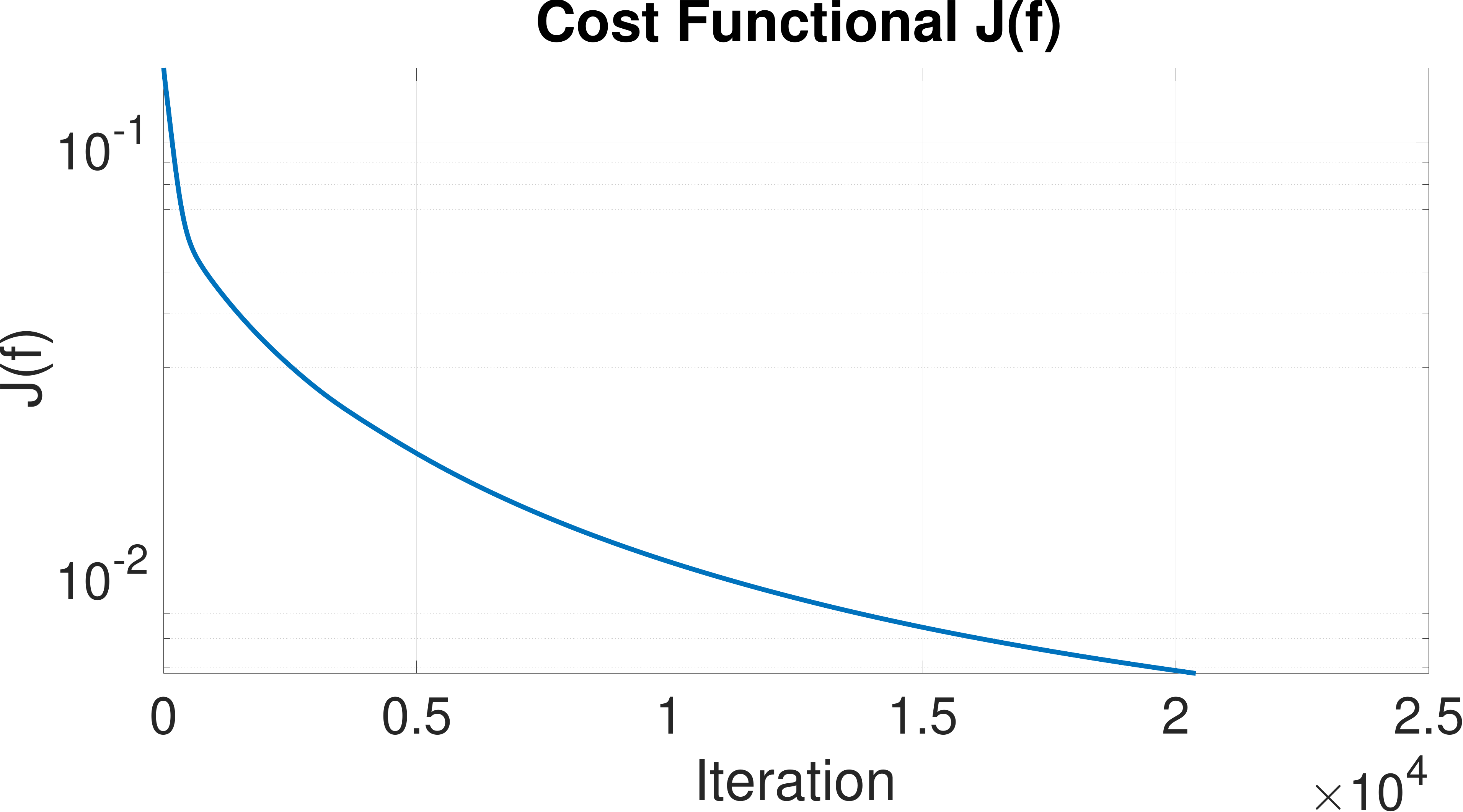}
\includegraphics[width=0.328\textwidth]{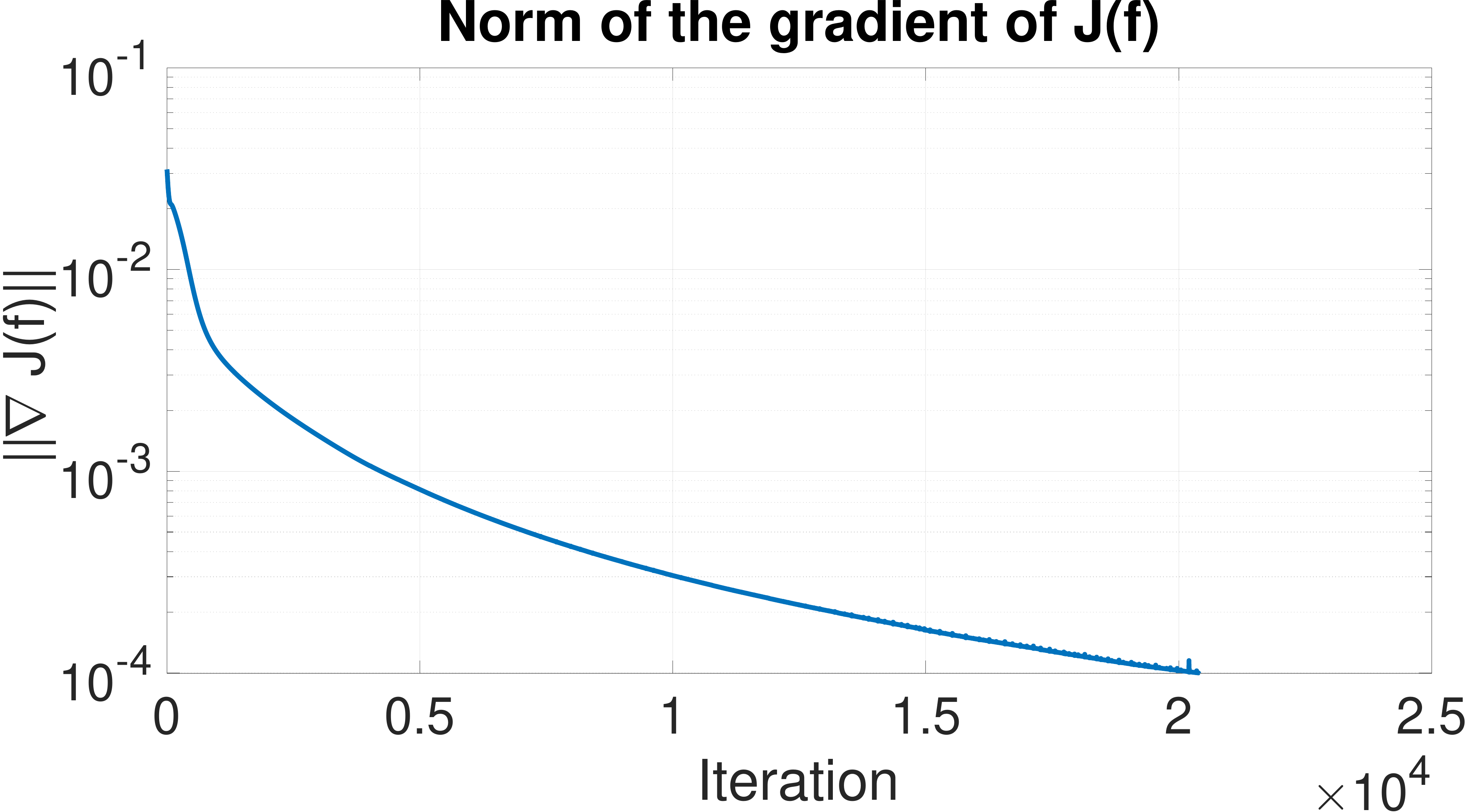}
\includegraphics[width=0.328\textwidth]{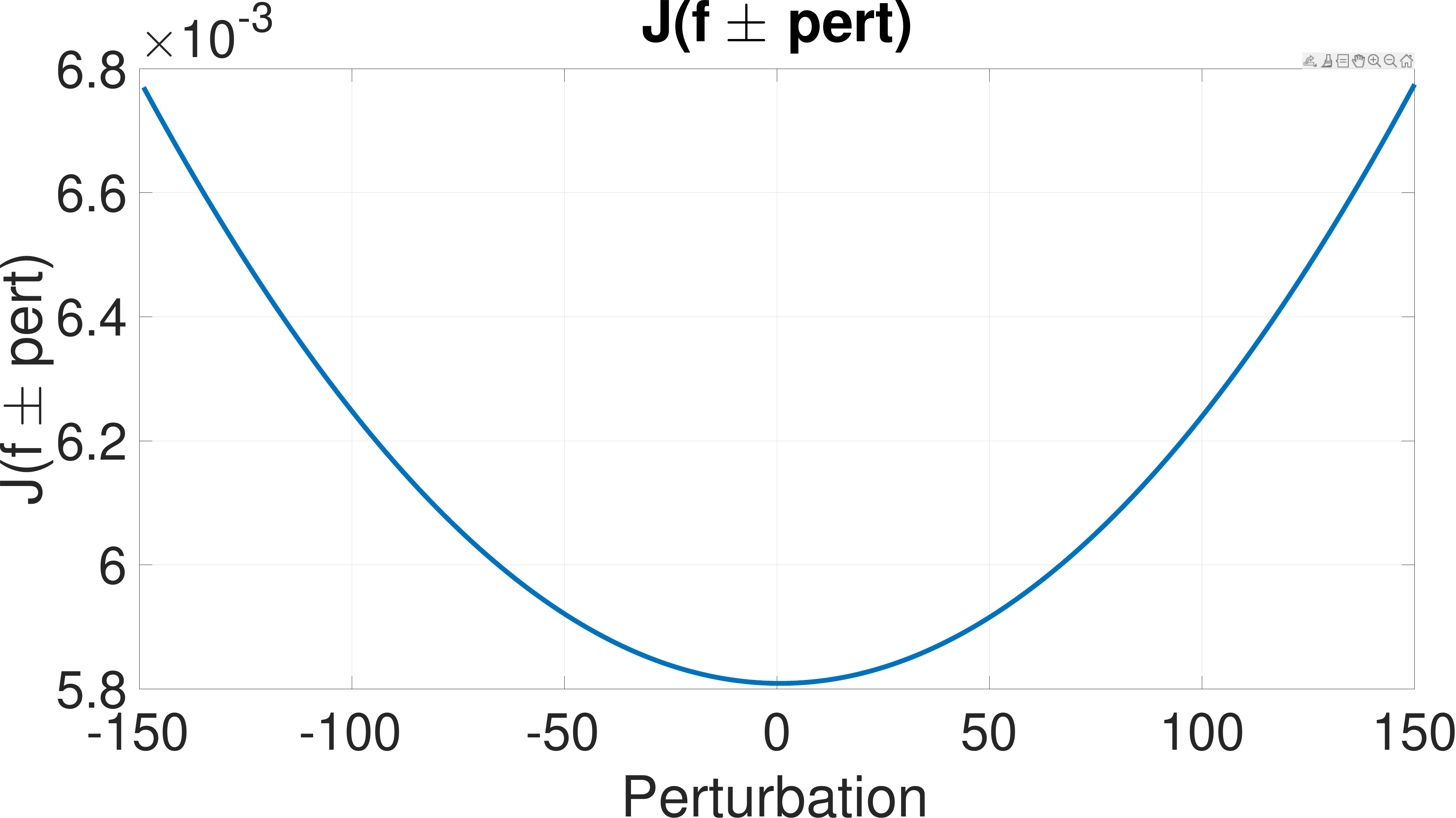}
\caption{Results for $\Omega_c=[-1,1]$ and $\Omega_o=[-0.5,0.5]$. Top row: Dynamics of control $f$ (left) and the associated variables $u$ (center) and variable $v$ (right). Bottom row: Evolution of the cost functional $J(f)$ (left), evolution of $\|\nabla J(f)\|$ (center) and the influence of perturbing the obtained control (right).}\label{fig:case3}
\end{center}
\end{figure}

\subsubsection{Case 4. $\Omega_c=[-1,0.2]$ and $\Omega_o=[-0.2,1]$}

In Figure~\ref{fig:case4} we present the results on the case where the control domain and the observation domain only overlap in the central part of the spatial domain (interval $[-0.2,0.2]$). In this case the obtained control plays a role only at the beginning of the time interval and around the half of $\Omega_c$ that is next to $\Omega_o$, producing a non-symmetric behavior of the variable $v$, with the highest influence on the left endpoint of the observation domain, which leads the variable $u$ to be somehow close to $u_d$ in $\Omega_o$, at the 'expense' of being really away of the desired configuration outside of the observation domain. Moreover, functional $J(f)$ is decreasing  with respect to the iterations and the norm of its gradient has a steep decrease at the beginning of the simulations, then a mild increase and then it decreases again, 
although it doesn't reach the desired tolerance before the maximum number of iterations is satisfied (at the final iteration $\|\nabla J(f)\|=0.0077$). Finally, we observe that the obtained control corresponds with a local minimum of the functional $J$.

\begin{figure}[H]
\begin{center}
\includegraphics[width=0.328\textwidth]{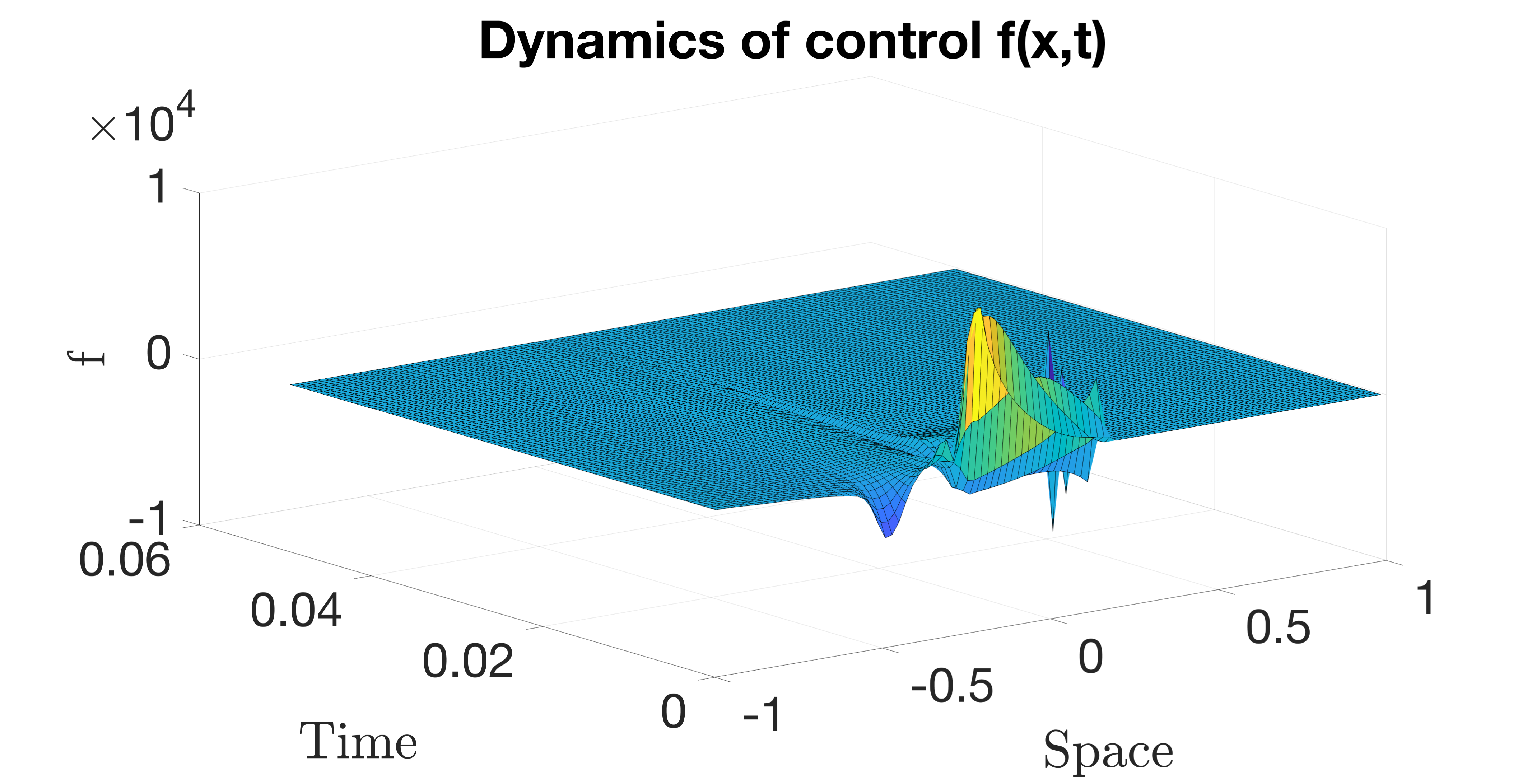}
\includegraphics[width=0.328\textwidth]{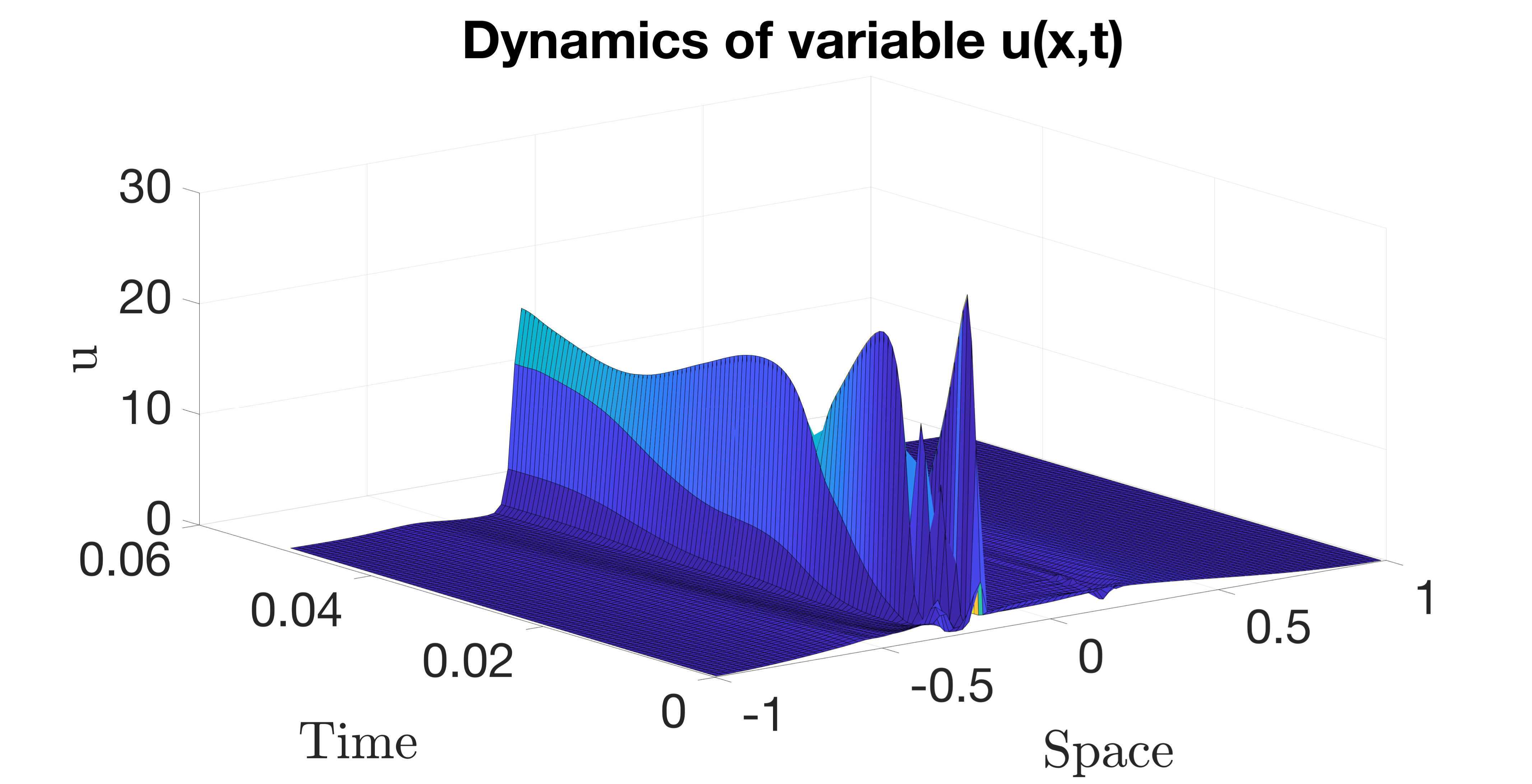}
\includegraphics[width=0.328\textwidth]{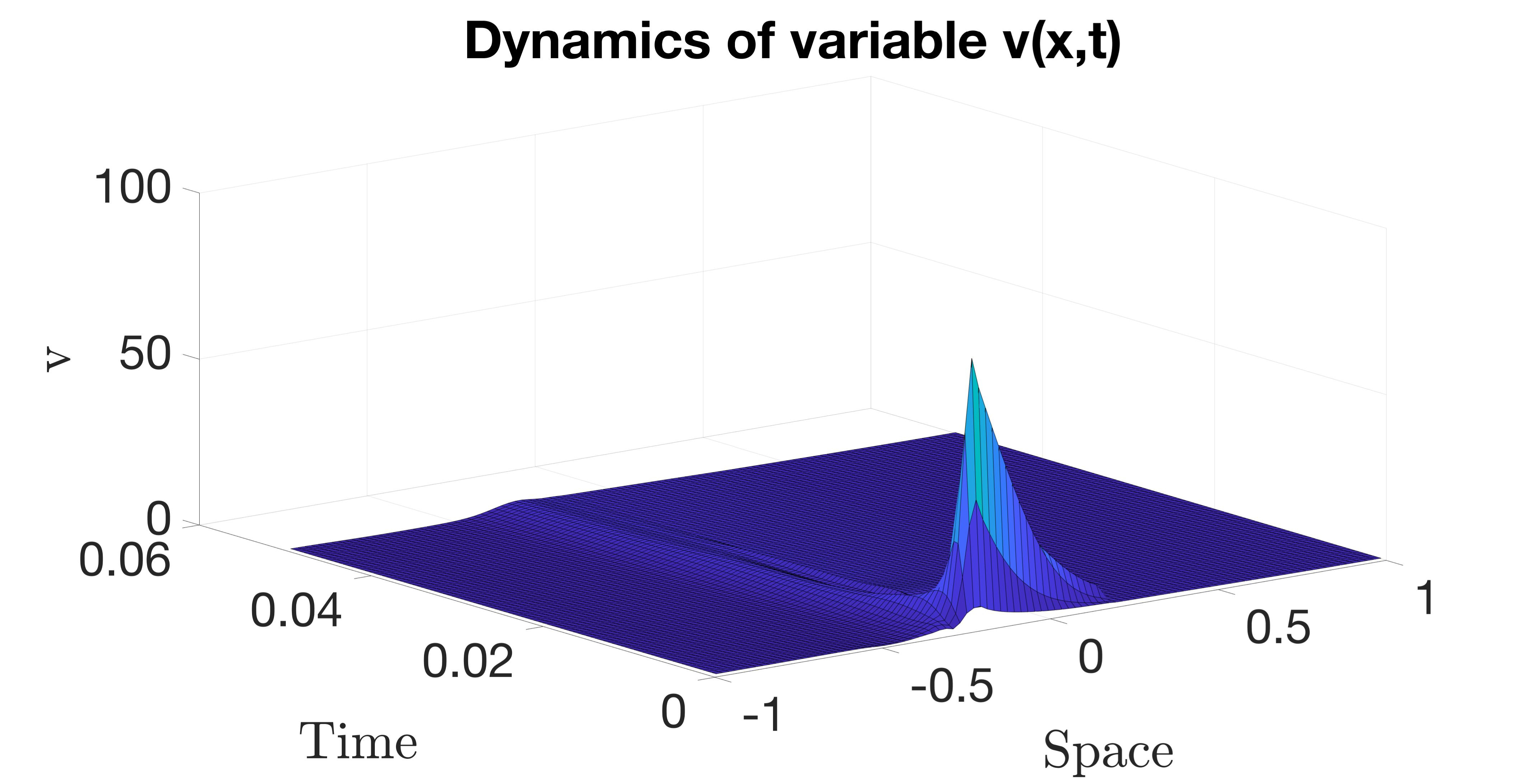}
\\
\includegraphics[width=0.328\textwidth]{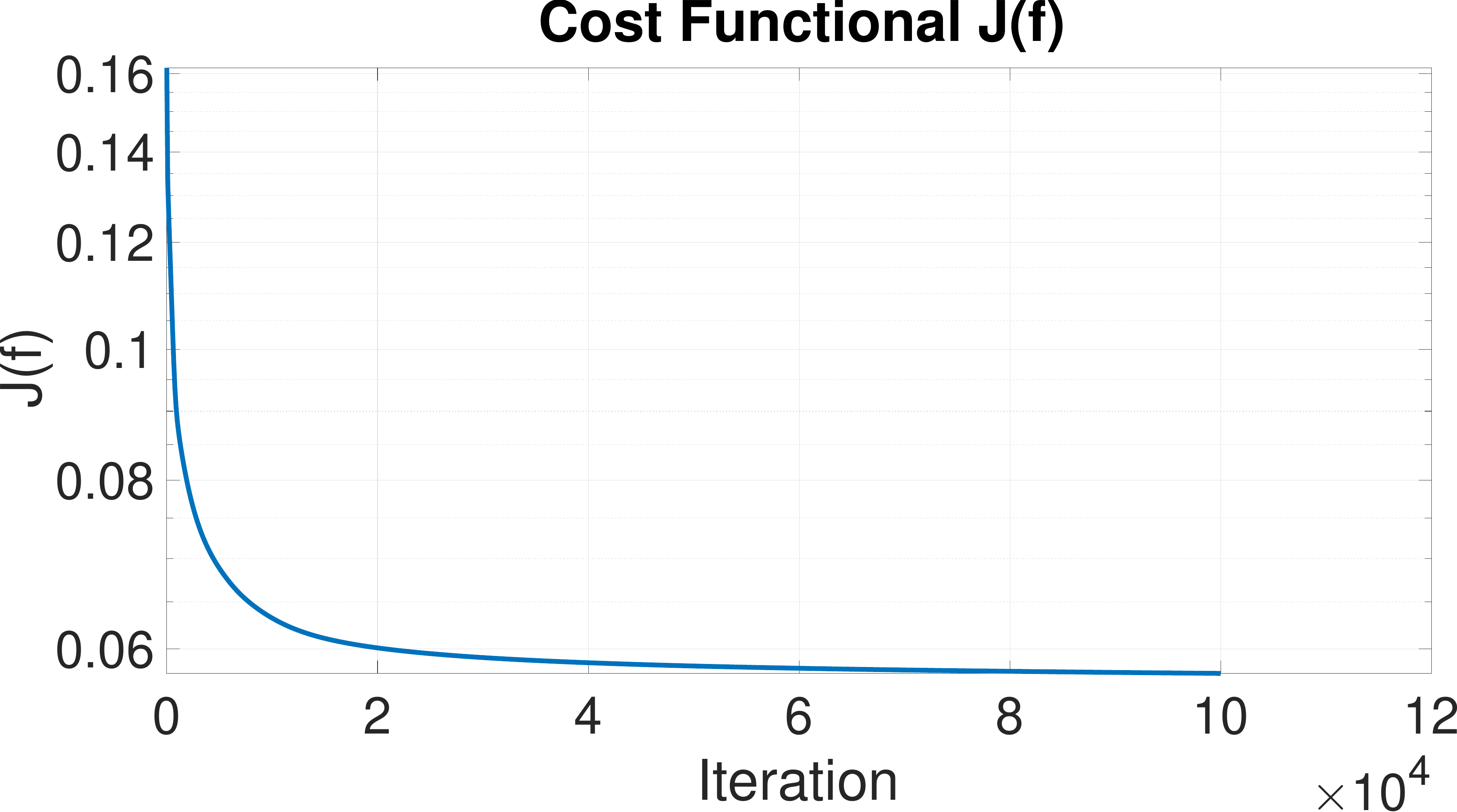}
\includegraphics[width=0.328\textwidth]{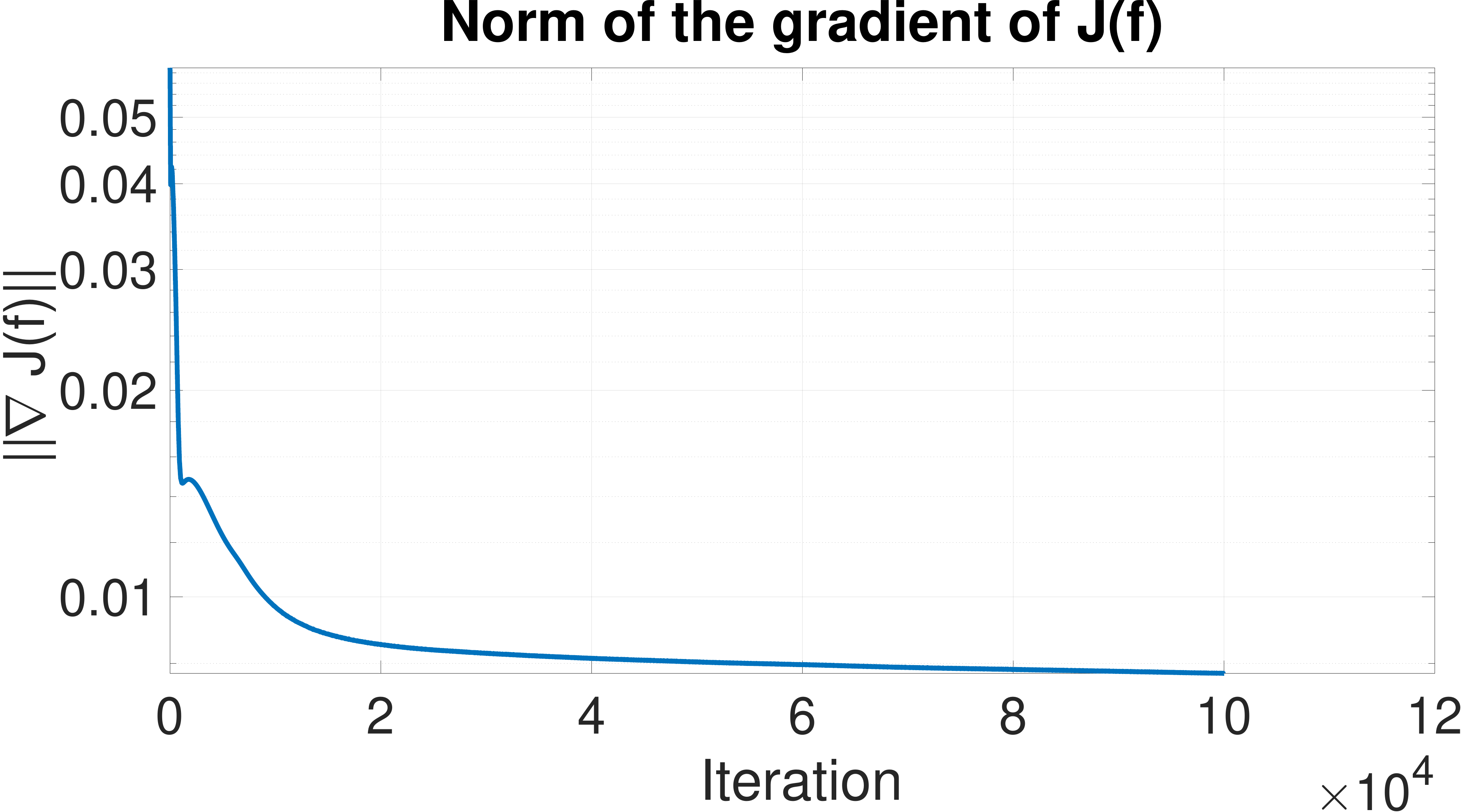}
\includegraphics[width=0.328\textwidth]{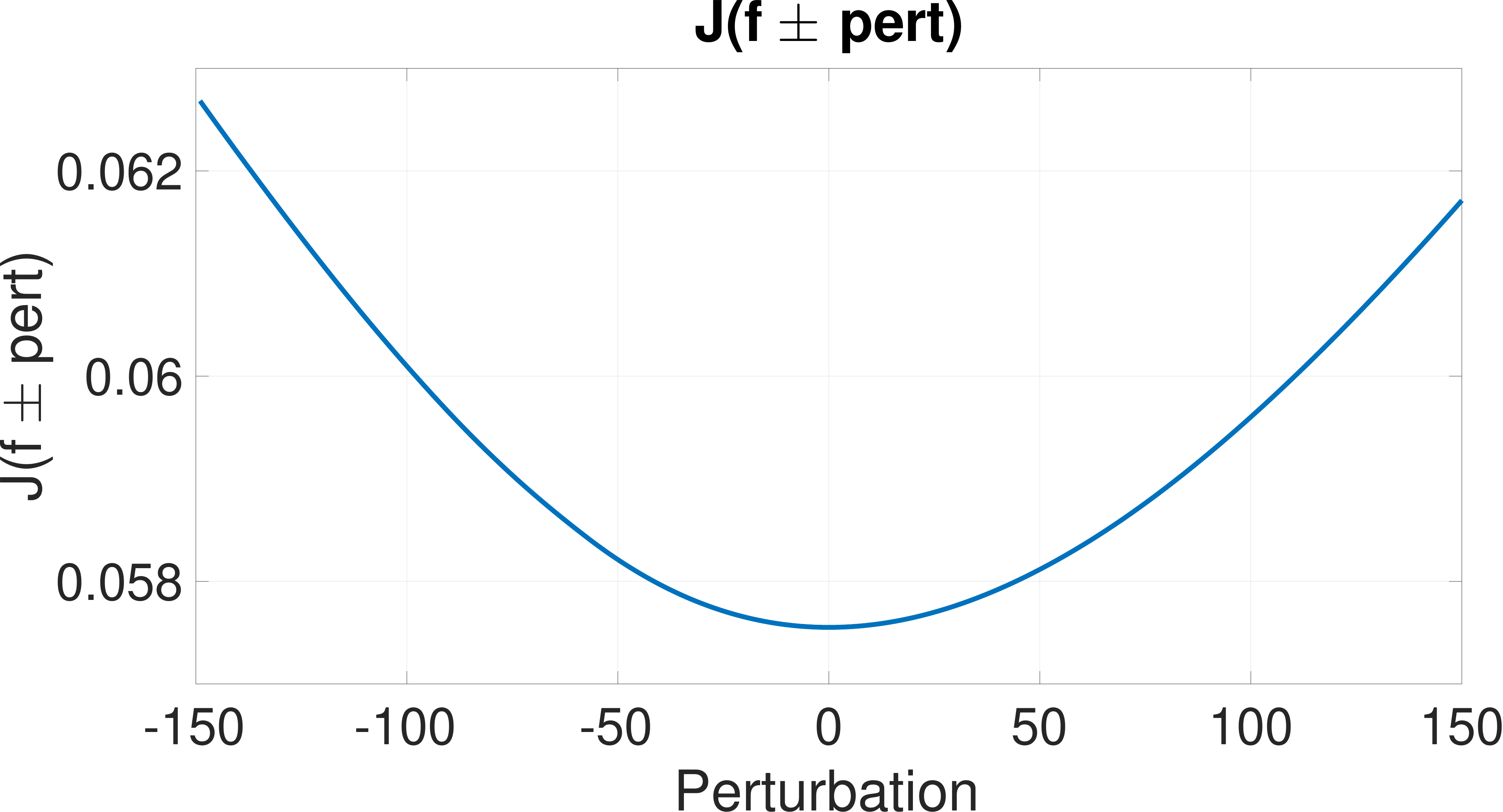}
\caption{Results for $\Omega_c=[-1,0.2]$ and $\Omega_o=[-0.2,1]$. Top row: Dynamics of control $f$ (left) and the associated variables $u$ (center) and variable $v$ (right). Bottom row: Evolution of the cost functional $J(f)$ (left), evolution of $\|\nabla J(f)\|$ (center) and the influence of perturbing the obtained control (right).}\label{fig:case4}
\end{center}
\end{figure}

\subsubsection{Case 5. $\Omega_c=[-1,-0.2]$ and $\Omega_o=[0.2,1]$}

In this case we study the most challenging of the possible situations, where the control domain (located at the left of the spatial interval) and the observation domain (located at the right of the spatial interval) don't overlap 
with each other. The results are presented in Figure~\ref{fig:case5} where we can observe how the control is acting mainly in the region close to the right boundary of the control domain ($x=-0.2$), and as a difference with previous cases, it acts during most of the time interval. This produce a large spike on the configuration of variable $v$ close to $x=-0.2$ (which is later on diminished on time due to the fact that the control is still acting) and this produce $u$ to get close to $u_d$ in $\Omega_o$ but very far away of that configuration outside that domain. Both functional $J(f)$ and the norm of its gradient is always decreasing with respect to the iterations but it doesn't reach the desired tolerance before the maximum number of iterations is satisfied (at the final iteration $\|\nabla J(f)\|=0.0367$). Interestingly this is the only case where the obtained control is not a local minimum for functional $J$ when we perturb the control, meaning that if we keep running the simulations we should be able to obtain a better control. Unfortunately, the decrease on functional $J$ is so slow, that even increasing the number of iterations does not produce a better result, to illustrate this fact we present in Figure~\ref{fig:case5b} the results if the maximum number of iterations is set to $200000$, where now we obtain $\|\nabla J(f)\|=0.035$.

\begin{figure}[H]
\begin{center}
\includegraphics[width=0.328\textwidth]{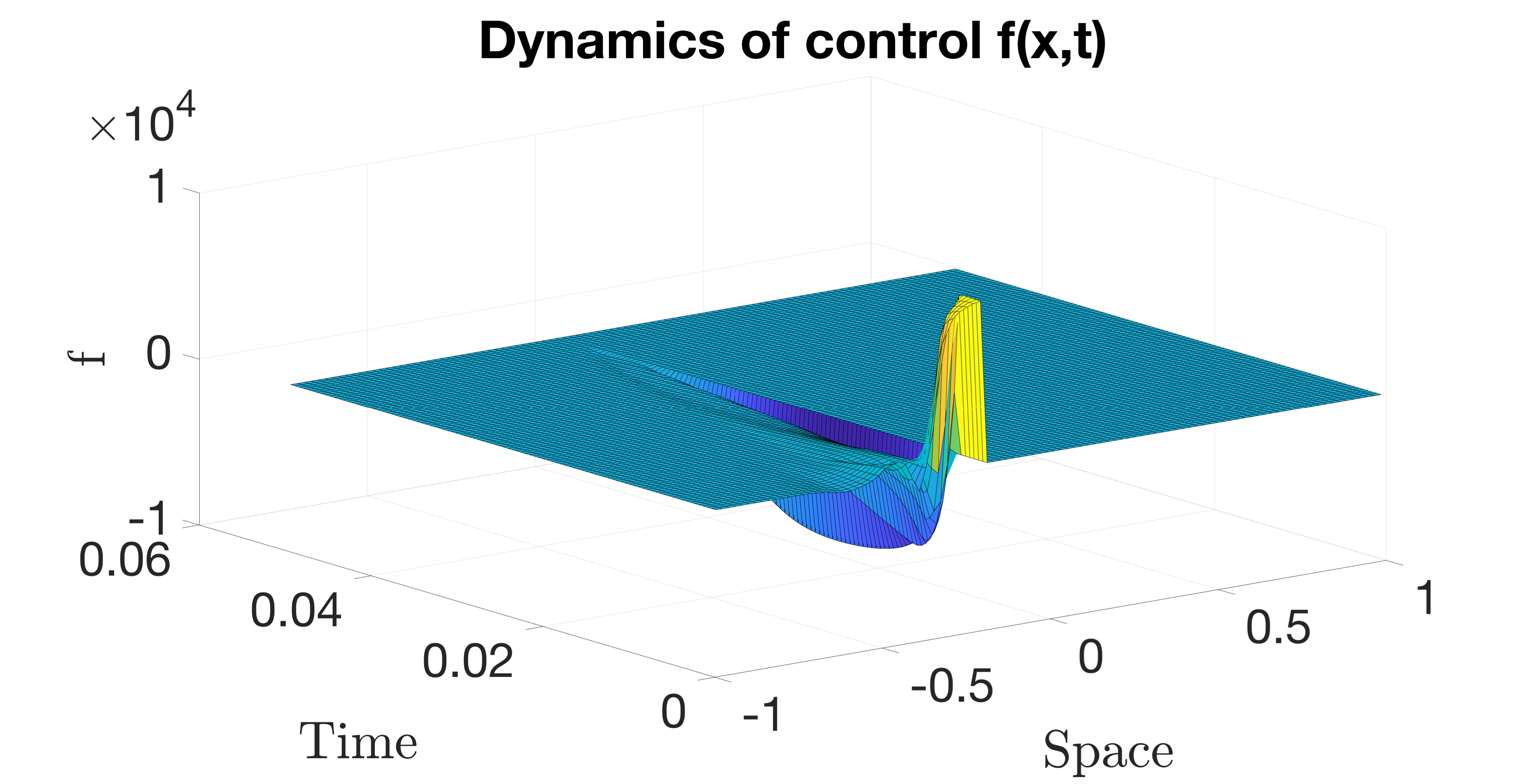}
\includegraphics[width=0.328\textwidth]{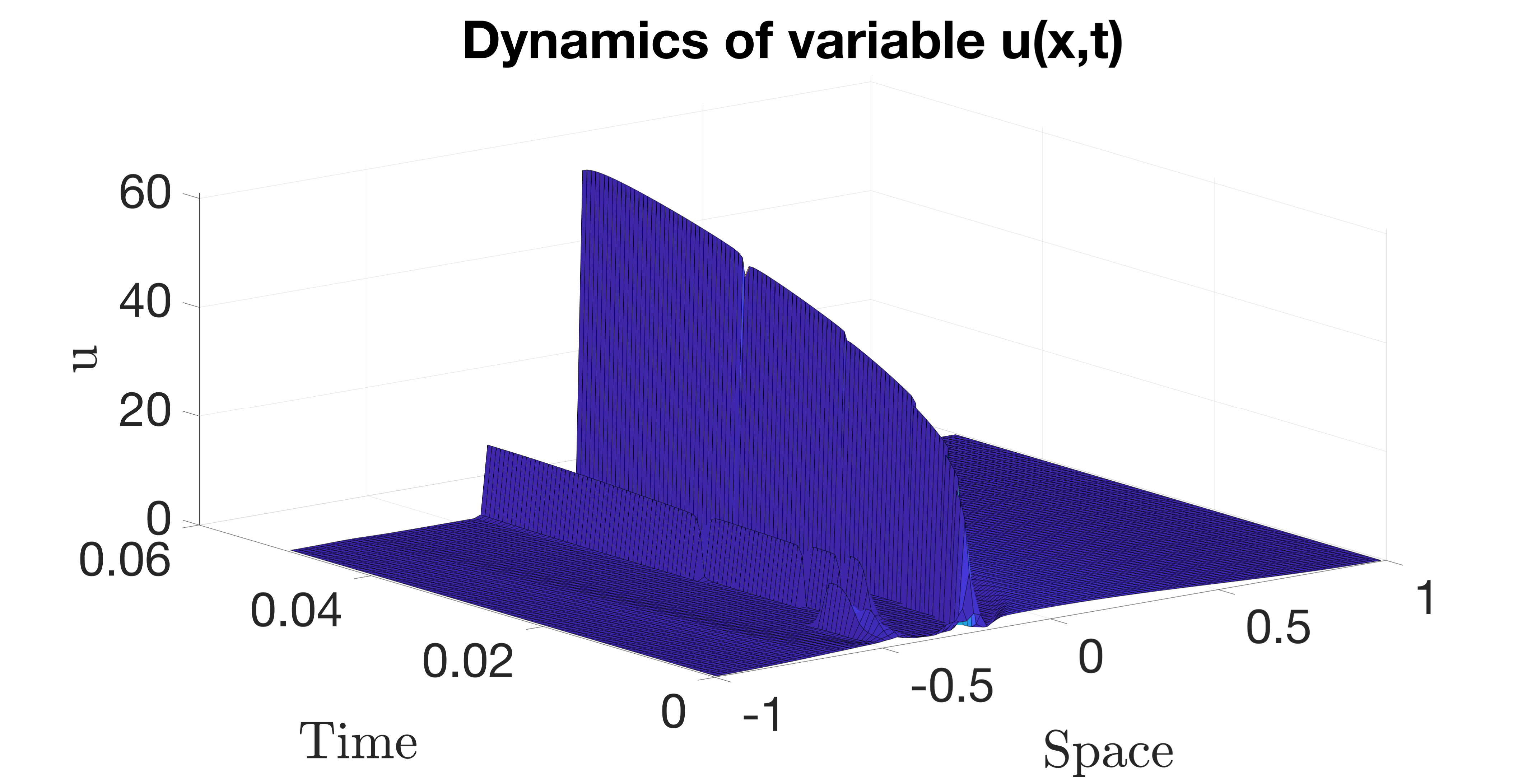}
\includegraphics[width=0.328\textwidth]{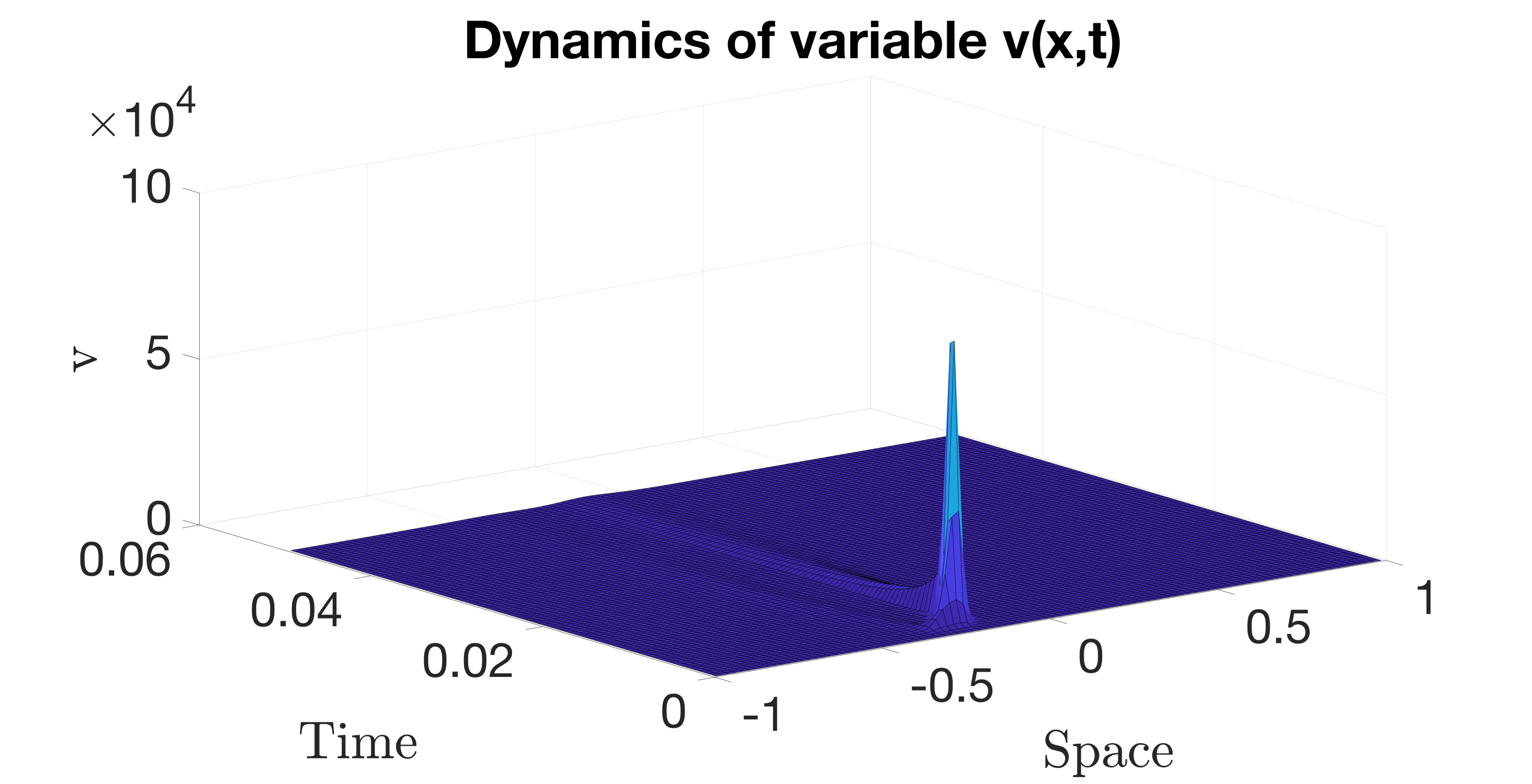}
\\
\includegraphics[width=0.328\textwidth]{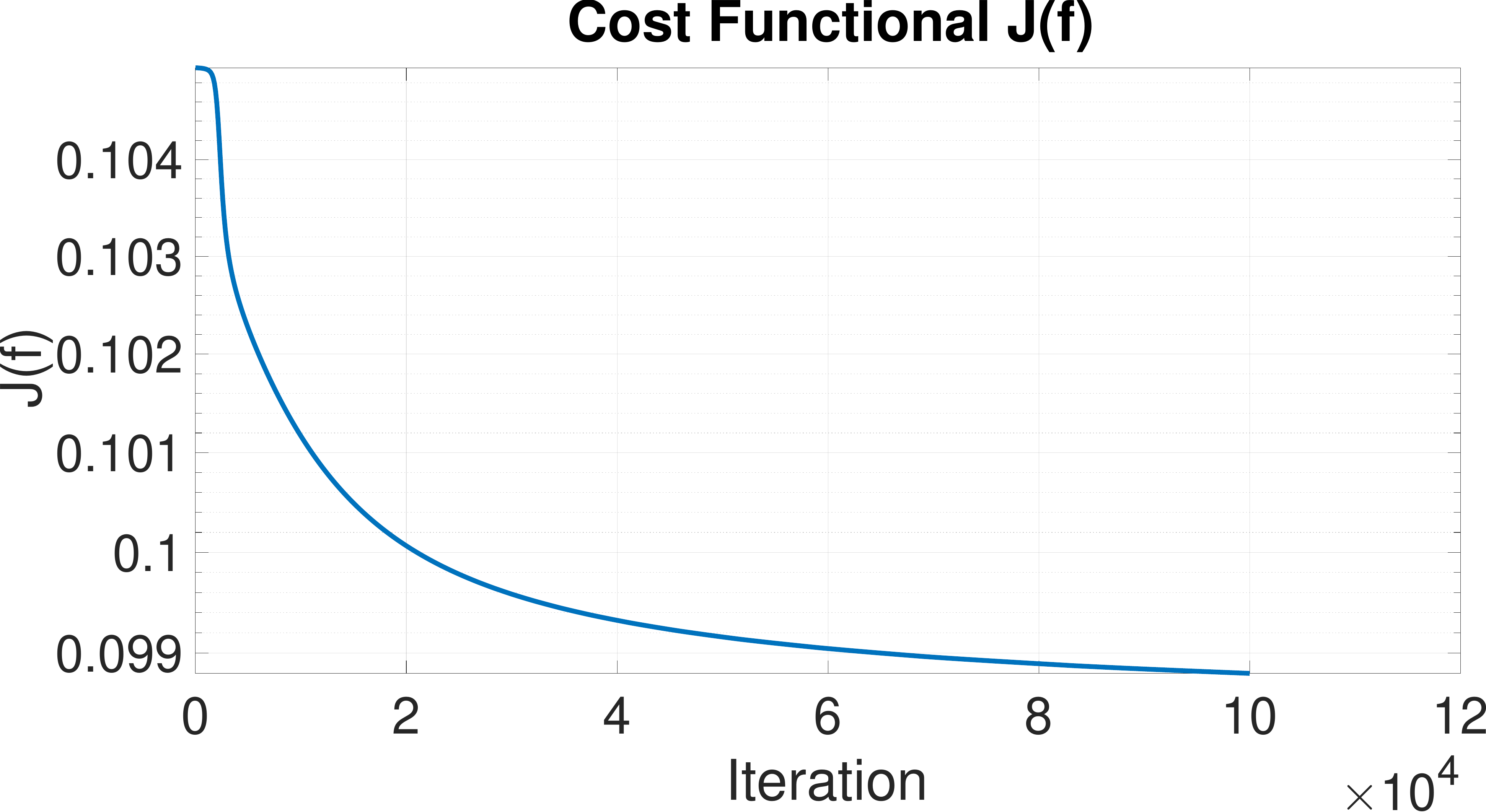}
\includegraphics[width=0.328\textwidth]{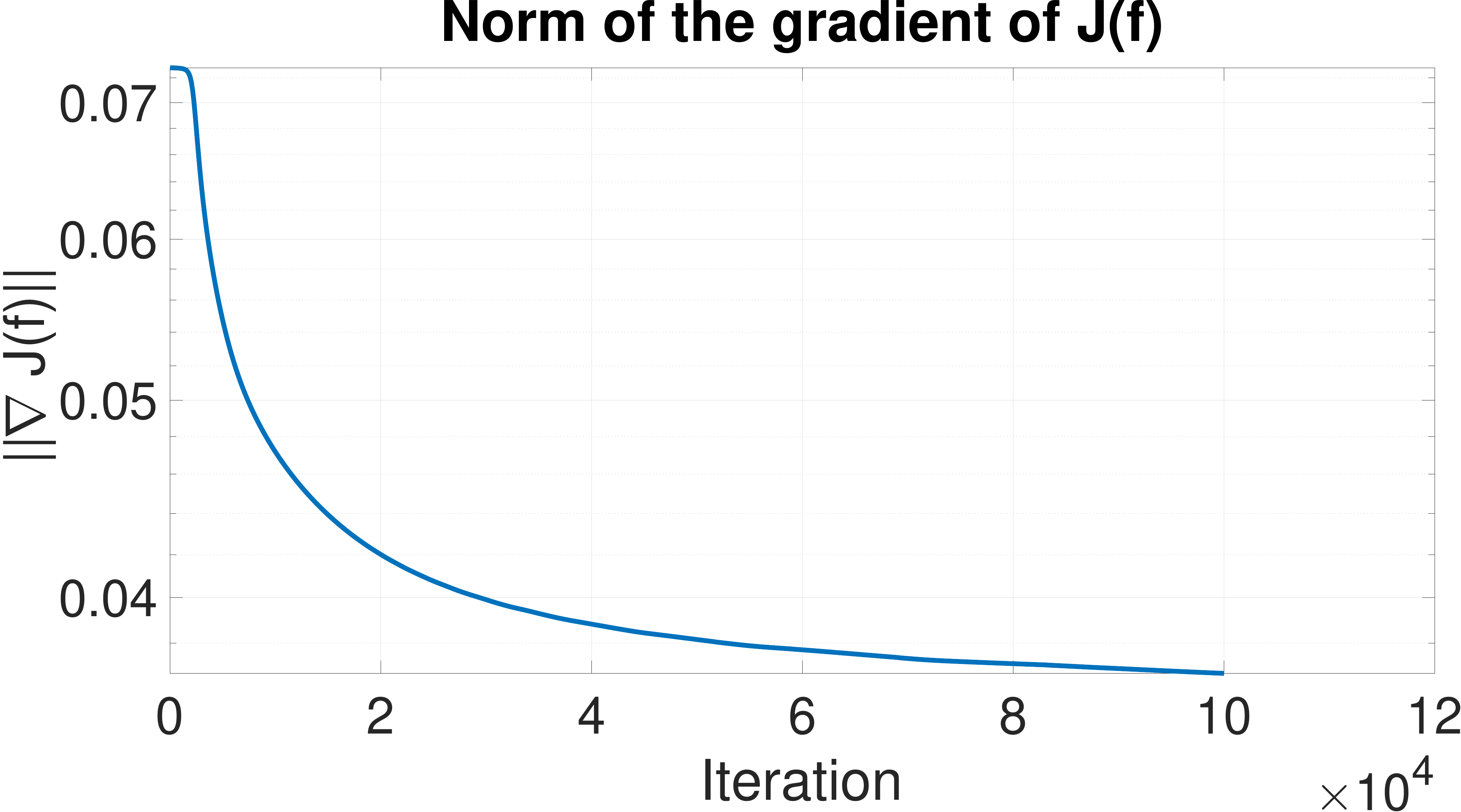}
\includegraphics[width=0.328\textwidth]{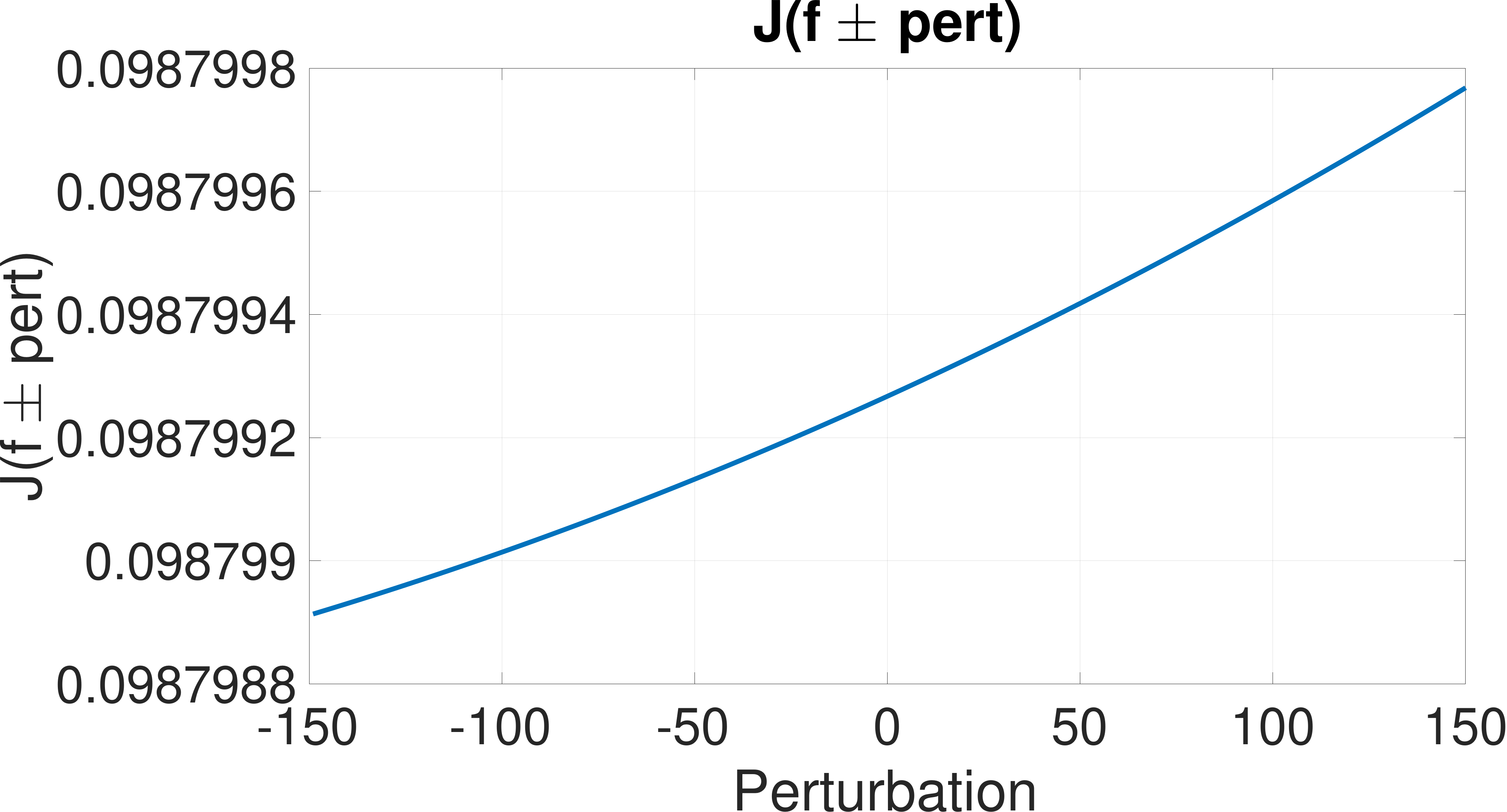}
\caption{Results for $\Omega_c=[-1,-0.2]$ and $\Omega_o=[0.2,1]$. Top row: Dynamics of control $f$ (left) and the associated variables $u$ (center) and variable $v$ (right). Bottom row: Evolution of the cost functional $J(f)$ (left), evolution of $\|\nabla J(f)\|$ (center) and the influence of perturbing the obtained control (right).}\label{fig:case5}
\end{center}
\end{figure}

\begin{figure}[H]
\begin{center}
\includegraphics[width=0.328\textwidth]{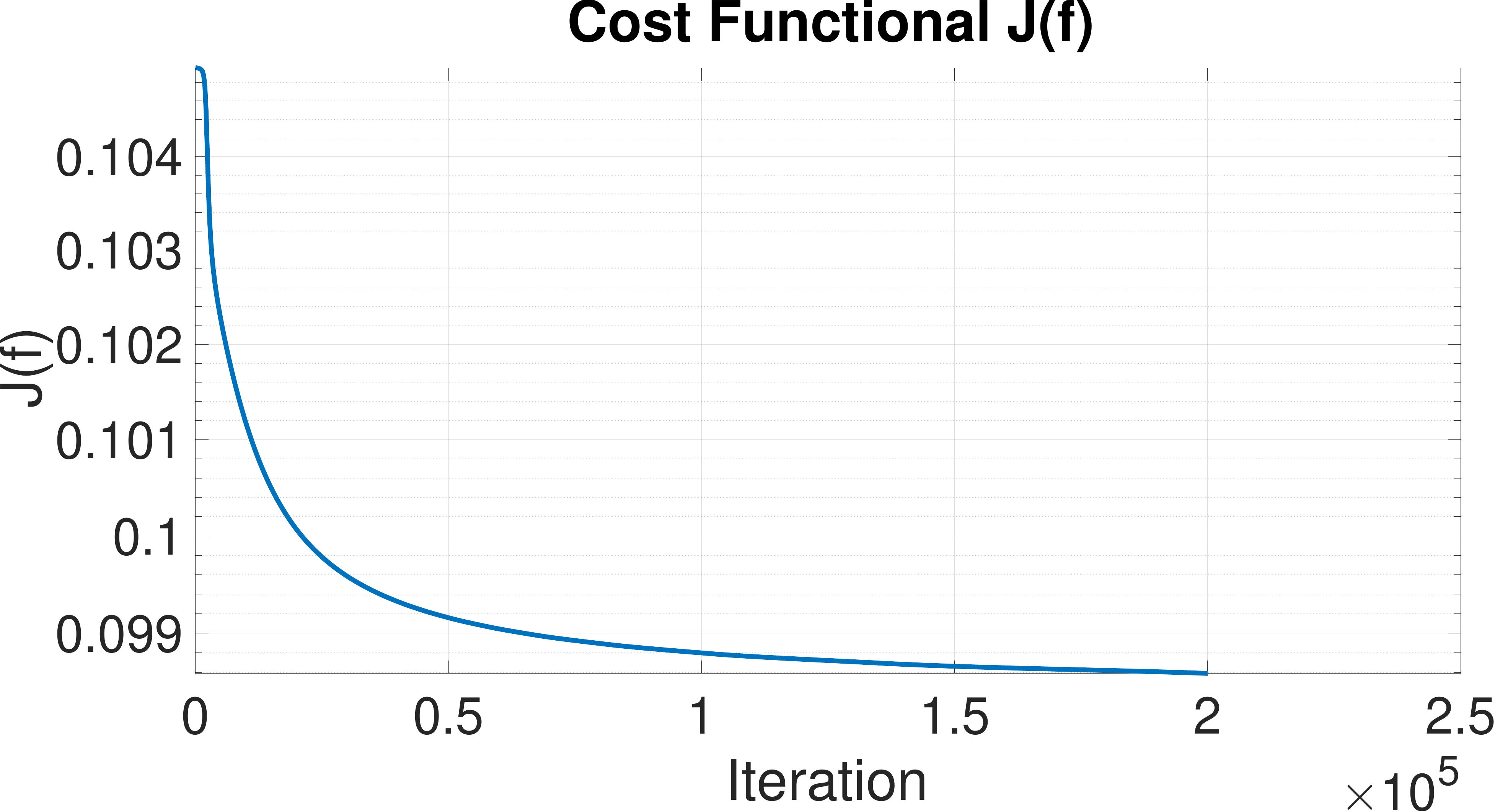}
\includegraphics[width=0.328\textwidth]{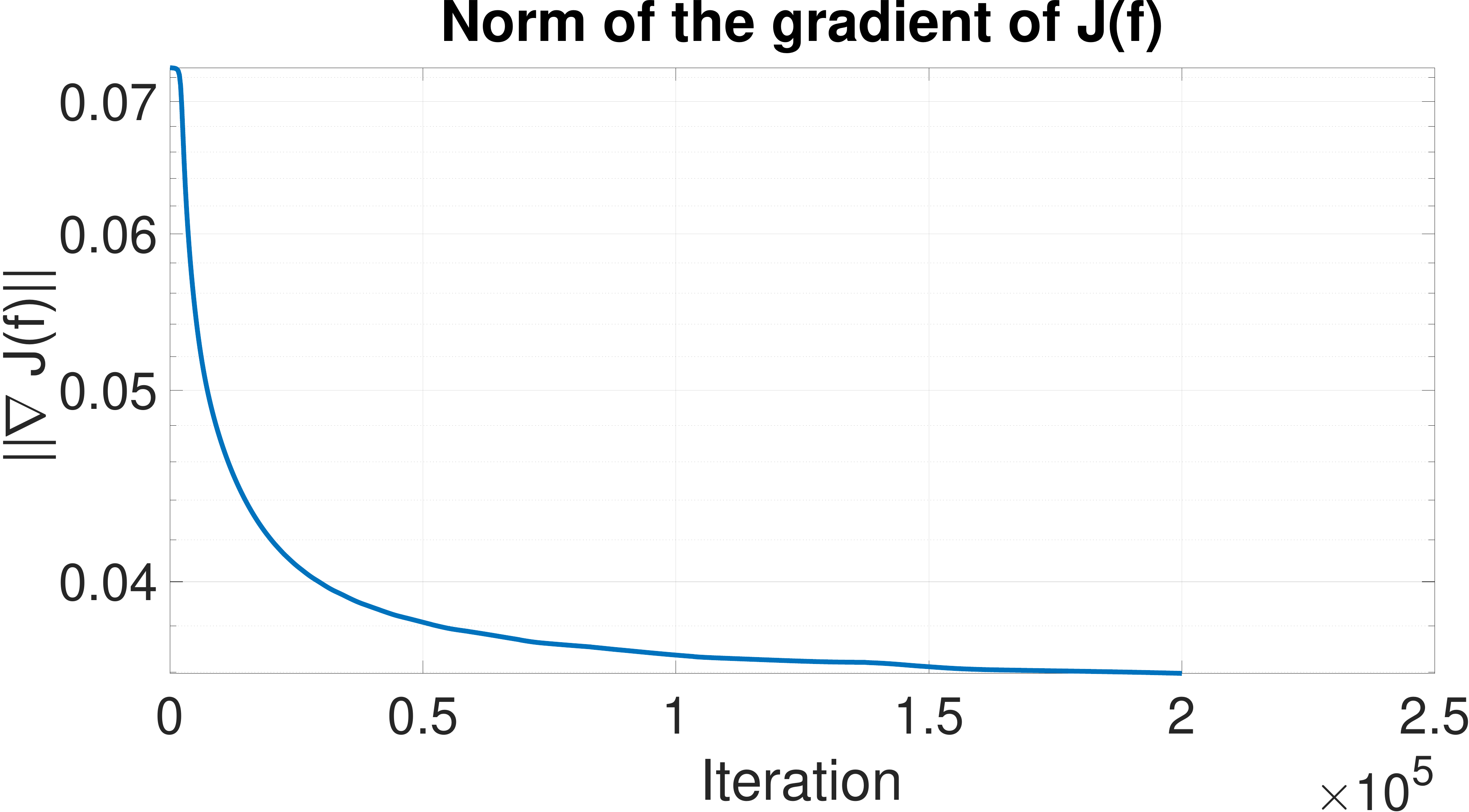}
\includegraphics[width=0.328\textwidth]{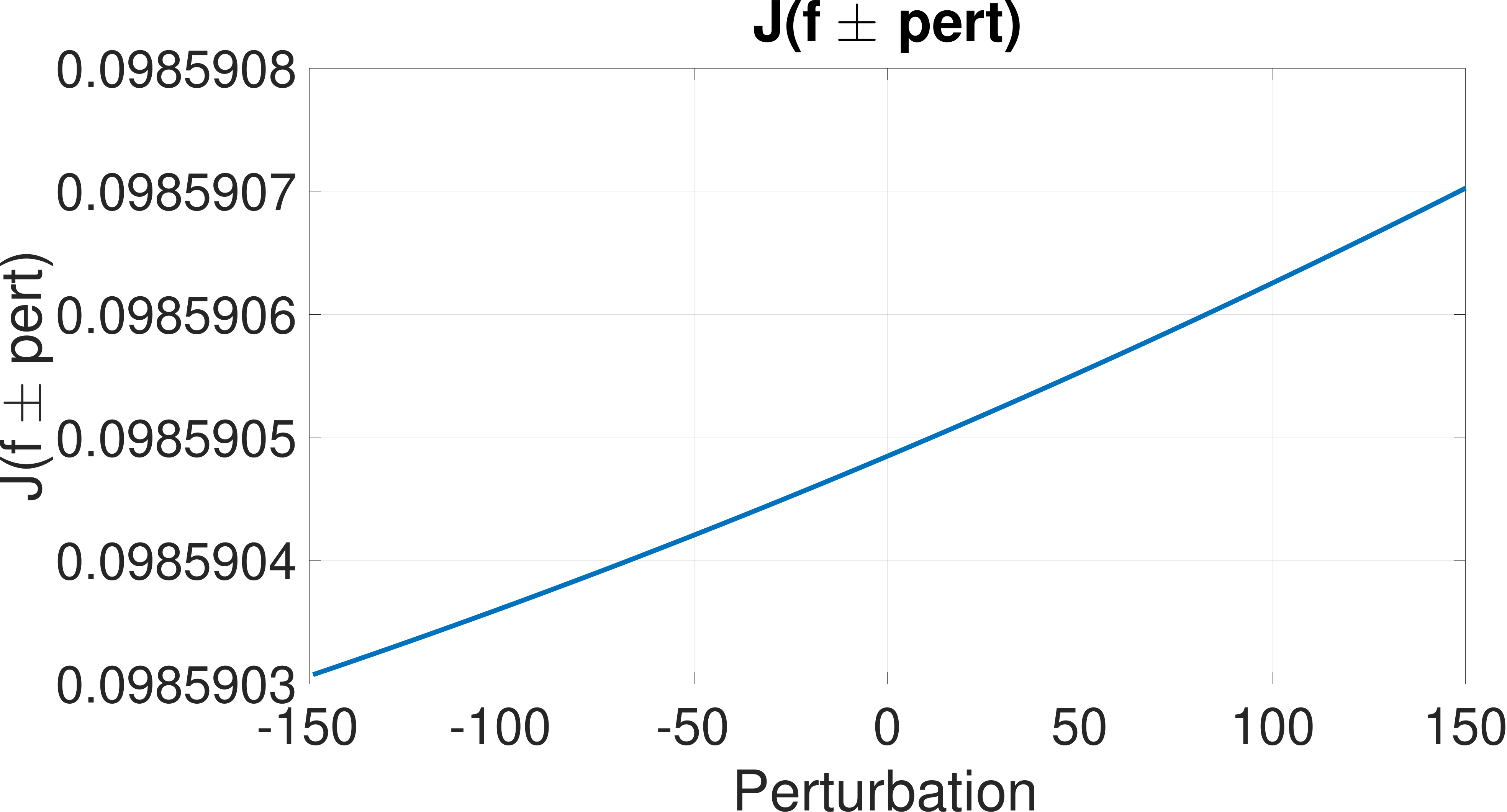}
\caption{Results for $\Omega_c=[-1,-0.2]$ and $\Omega_o=[0.2,1]$ with number of maximum of iterations set to $200000$. Evolution of the cost functional $J(f)$ (left), evolution of $\|\nabla J(f)\|$ (center) and the influence of perturbing the obtained control (right).}\label{fig:case5b}
\end{center}
\end{figure}

\subsection{Boundary Control}\label{subsec:simboundary}

In this section we study the effect of considering boundary controls $g(t)$ at both endpoints of the spatial interval ($x=-1$ and $x=1$) using two different type of boundary conditions, namely Robin and bilinear ones, as presented in \eqref{eq:typesBoundaryControls}. In particular we study how the system behaves when we consider different observation domains. In all the cases the initial conditions are
$$
u_0(\x)
\,=\,
1-\cos(\pi x)
\quad\mbox{ and }\quad
v_0(\x)
\,=\,
1-\cos(\pi x)\,.
$$
The desired state is set to be the constant value $u_d=\int_\Omega u_0dx=1$ and we consider $g(t)=0$ in both end points as the initial control in the minimization process. In order to point out  the type of influence  of the boundary control in the behavior of the system, we present in Figure~\ref{fig:Boundarycase0} the dynamics of variables $u$ and $v$ when we compute the dynamics without imposing any control acting on the system.


\begin{figure}[H]
\begin{center}
\includegraphics[width=0.49\textwidth]{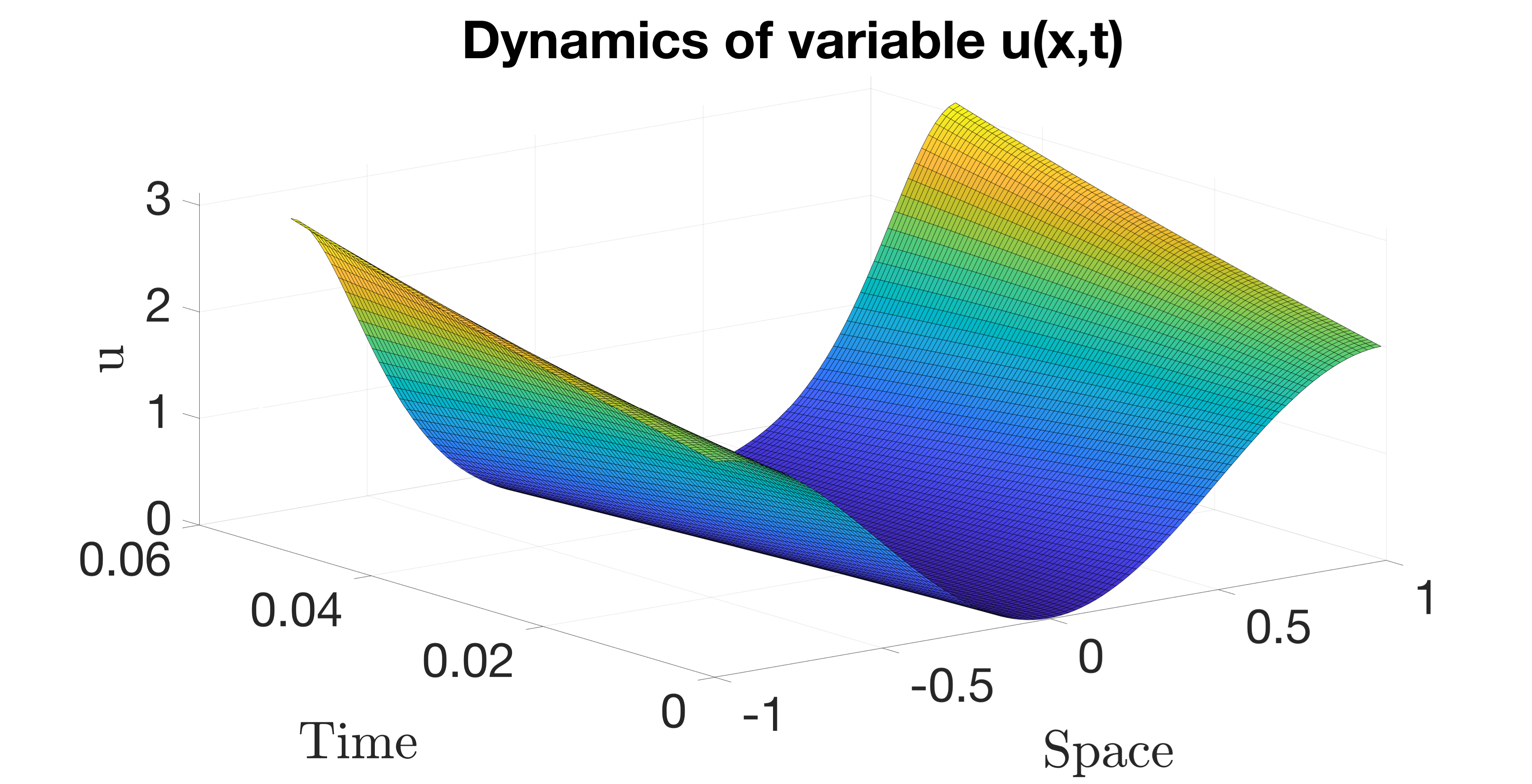}
\includegraphics[width=0.49\textwidth]{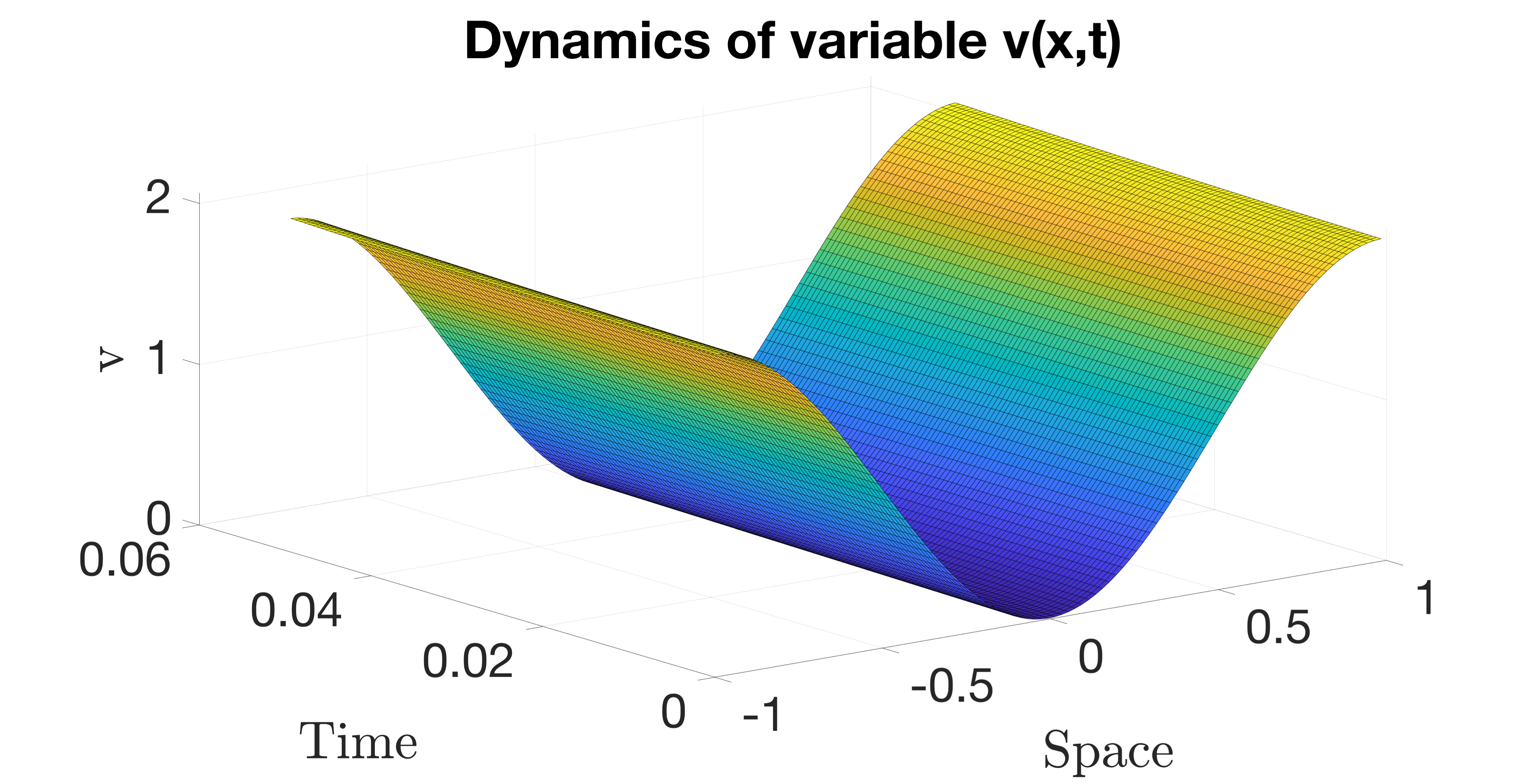}
\caption{Dynamics of variables $u$ (left) and $v$ (right) when no boundary control is acting on the system.}\label{fig:Boundarycase0}
\end{center}
\end{figure}

\subsubsection{Bilinear boundary control}

We start focusing on bilinear boundary conditions as presented in \eqref{eq:typesBoundaryControls} and we compare the differences in the dynamics of setting the observation domain to be the whole spatial domain or a subinterval located in the interior. In Figure~\ref{fig:BBCcase1} we present the results for $\Omega_o=[-1,1]$, where we can observe how the control on both sides acts mainly at the beginning of the simulations by adding and removing variable $v$, although their influence in the dynamics seems to be limited, and at the end it mainly does not change the dynamics of variables $u$ and $v$ if we compare them with the dynamics without using a control  in Figure~\ref{fig:Boundarycase0}. Moreover we see how the functional $J(g)$ has a slightly decreasing trend despite being oscillating, but we observe how the norm of its gradient is not really decreasing, just oscillates (at the final iteration $\|\nabla J(g)\|=0.041$). Finally, when perturbing simultaneously on both endpoints the obtained control, we realize that it's a local minimum and that adding control produces to go away of the minimum much faster than removing control.

\

On the other hand, we present in Figure~\ref{fig:BBCcase2} the results for $\Omega_o=[-0.5,0.5]$. We see how in this case the control starts 'aggressively' taking out variable $v$, which induces a much more severe behavior on variable $u$ compared with the previous case. Additionally, the functional $J(f)$  and the norm of its gradient are always decreasing with respect to the iterations, but the value of  the norm of the gradient is still far from the desired tolerance when the algorithm achieves the maximum number of iterations (at the final iteration $\|\nabla J(g)\|=0.0097$). We also observe that the obtained control is not exactly a local minimum although the values of $J$ are basically constant (the differences are of the order of $10^{-8}$).

\begin{figure}[H]
\begin{center}
\includegraphics[width=0.328\textwidth]{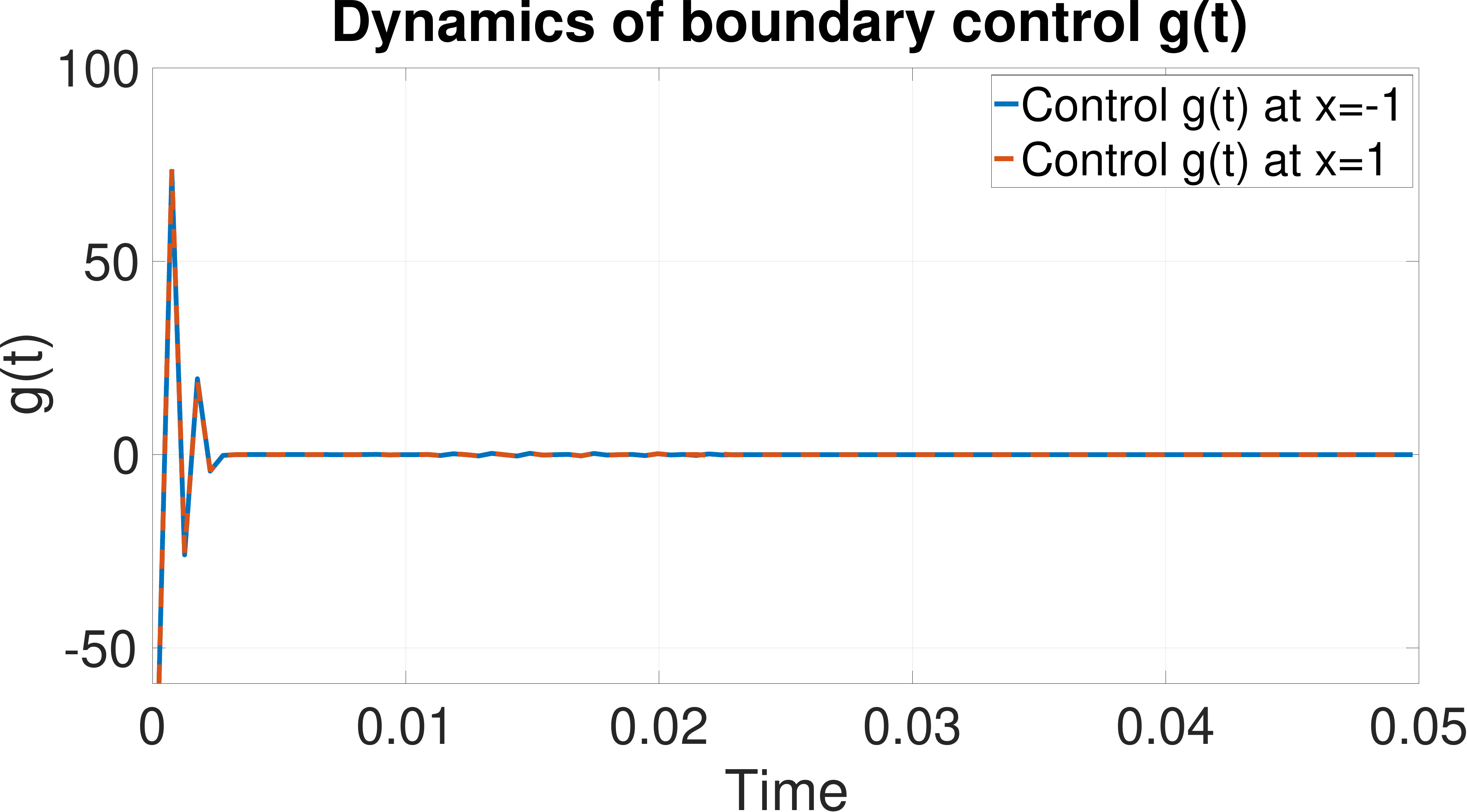}
\includegraphics[width=0.328\textwidth]{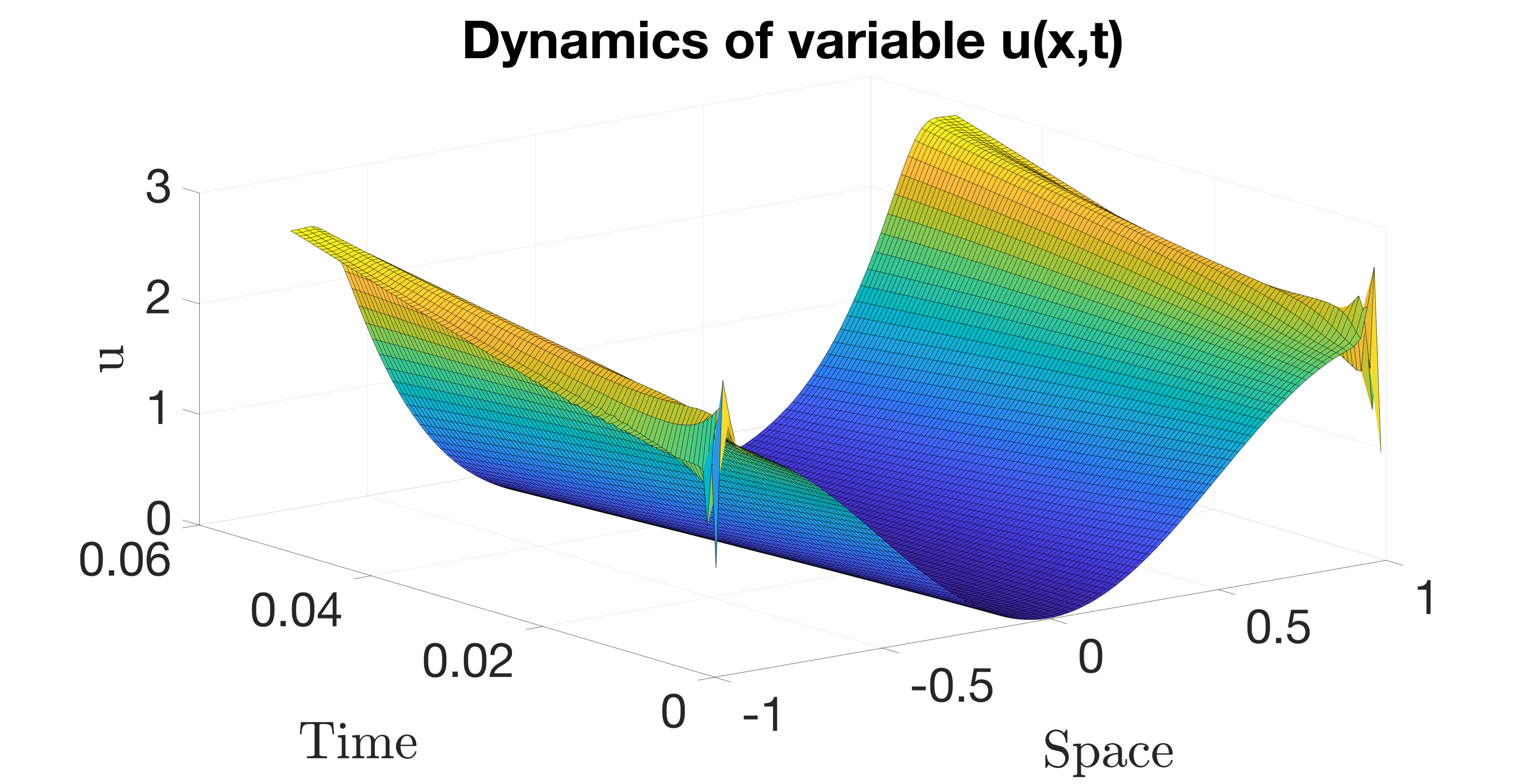}
\includegraphics[width=0.328\textwidth]{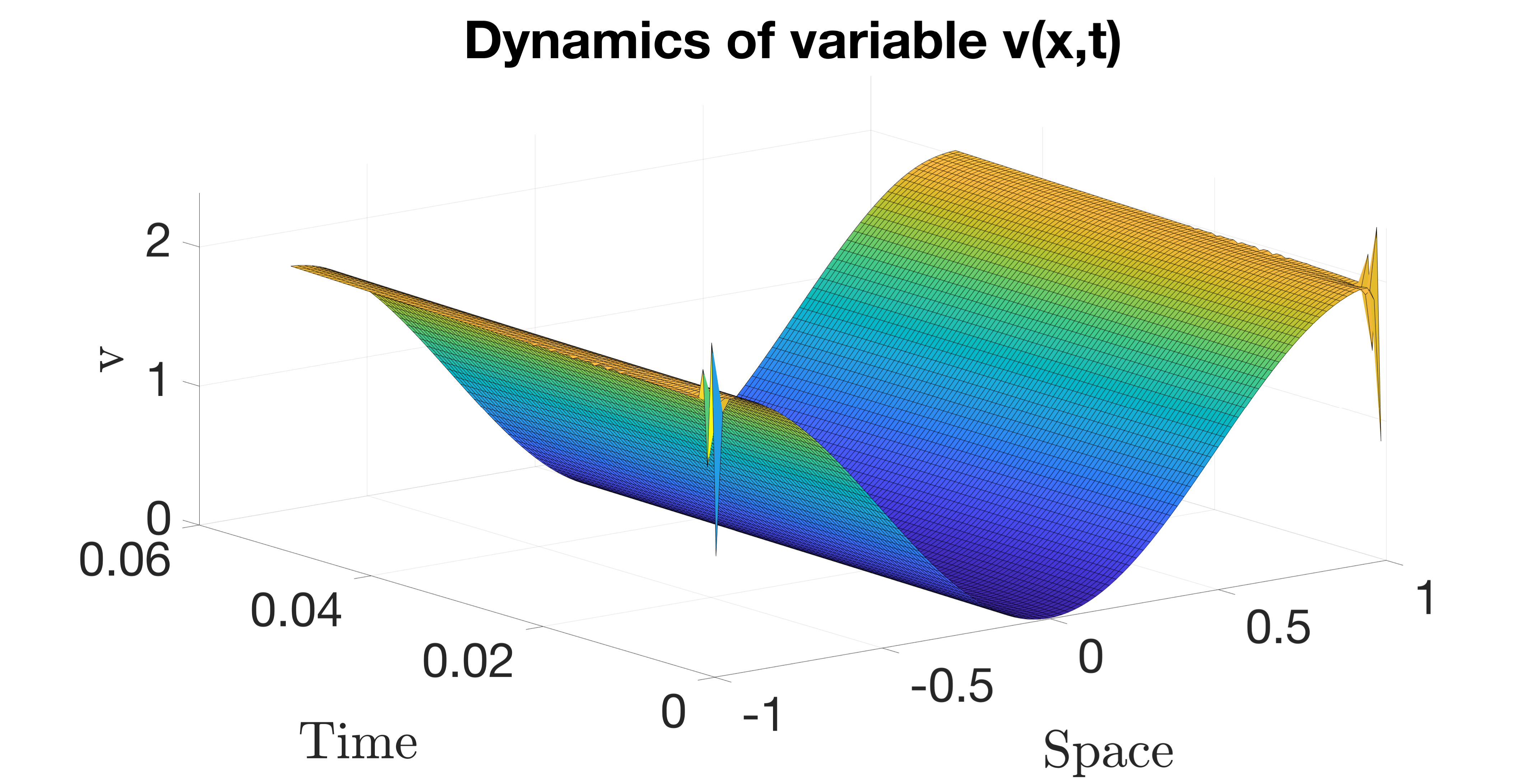}
\\
\includegraphics[width=0.328\textwidth]{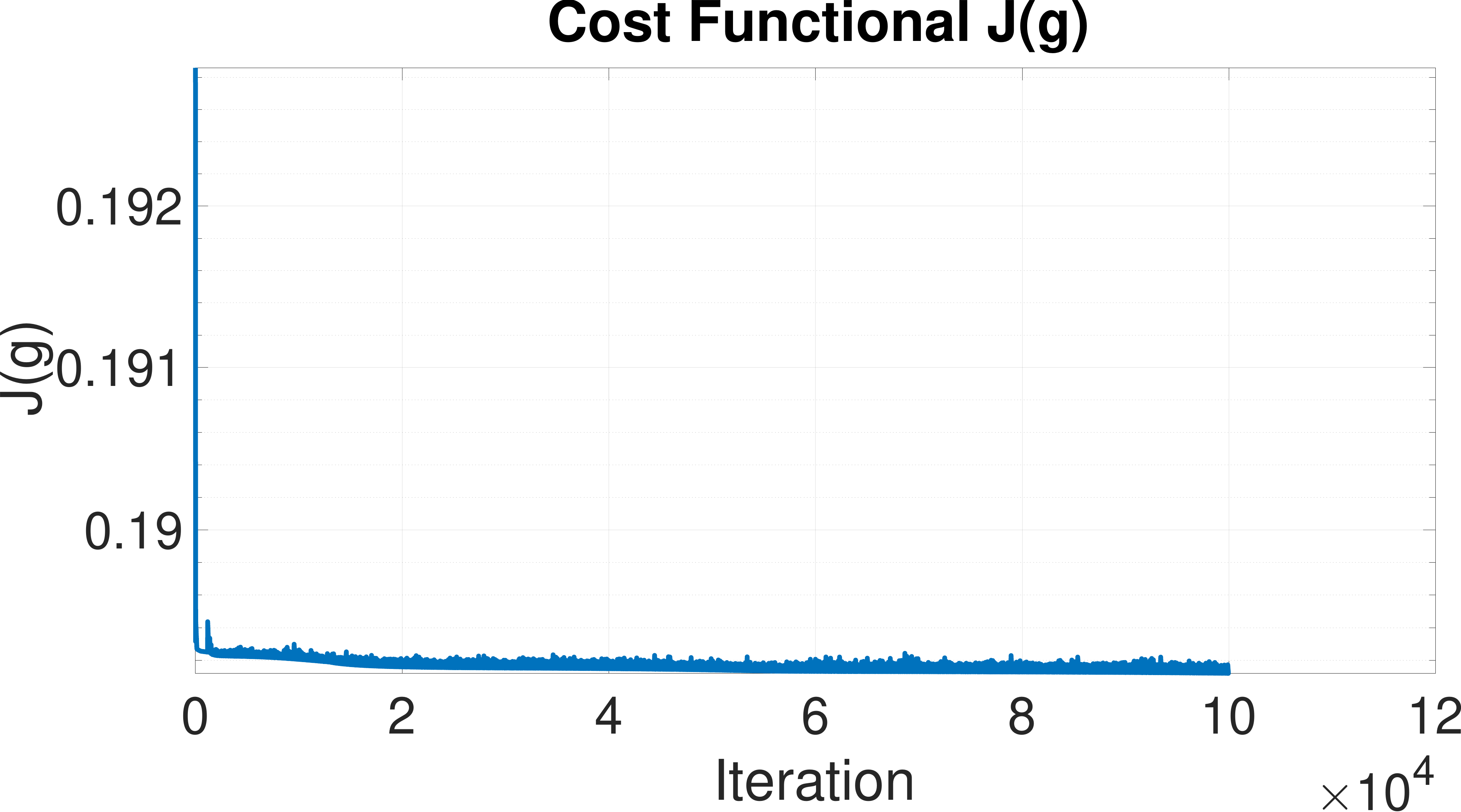}
\includegraphics[width=0.328\textwidth]{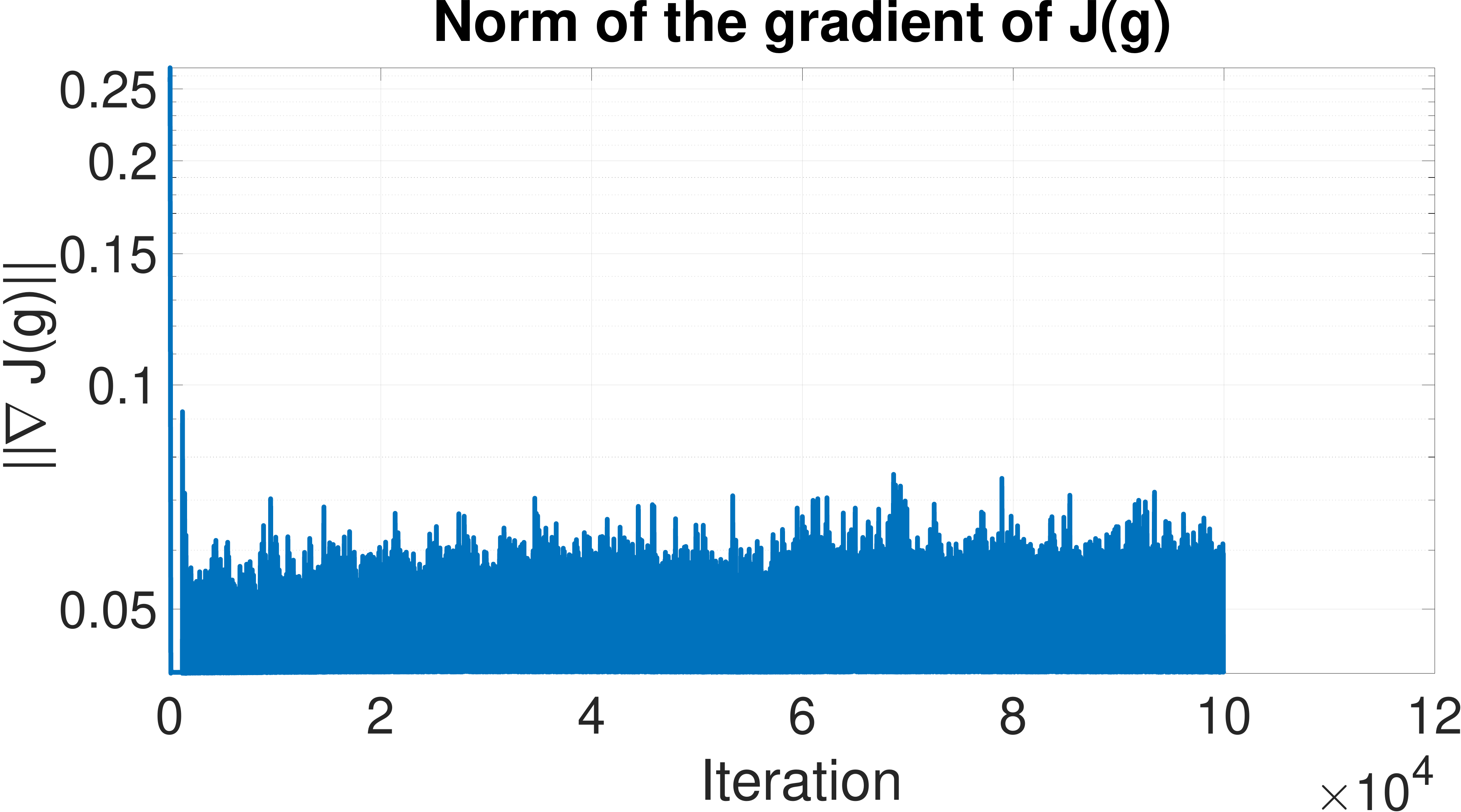}
\includegraphics[width=0.328\textwidth]{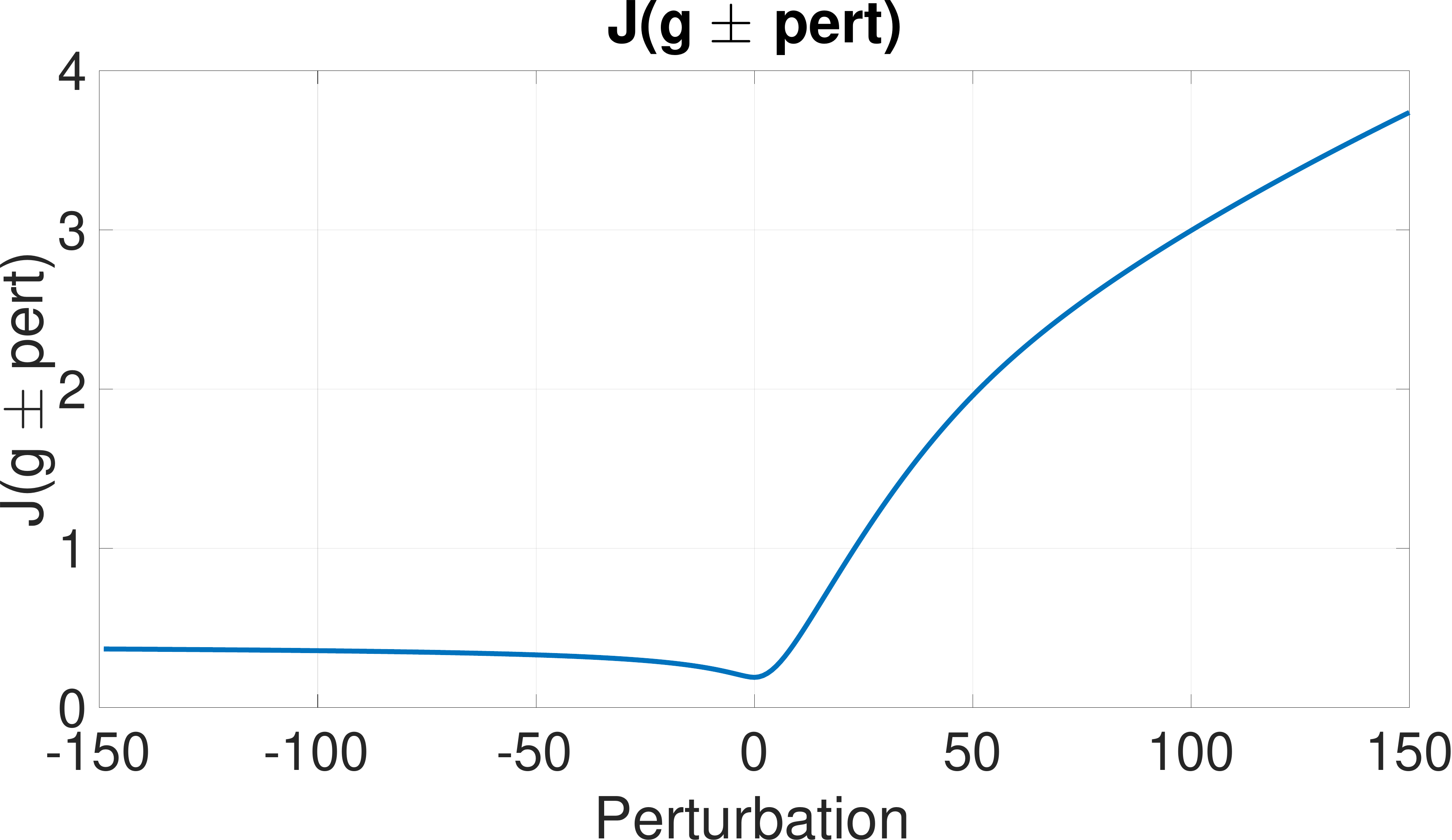}
\caption{Results for Bilinear boundary conditions with $\Omega_o=[-1,1]$. Top row: Dynamics of control $g$ (left) and the associated variables $u$ (center) and variable $v$ (right). Bottom row: Evolution of the cost functional $J(g)$ (left), evolution of $\|\nabla J(g)\|$ (center) and the influence of perturbing the obtained control (right).}\label{fig:BBCcase1}
\end{center}
\end{figure}

\begin{figure}[H]
\begin{center}
\includegraphics[width=0.328\textwidth]{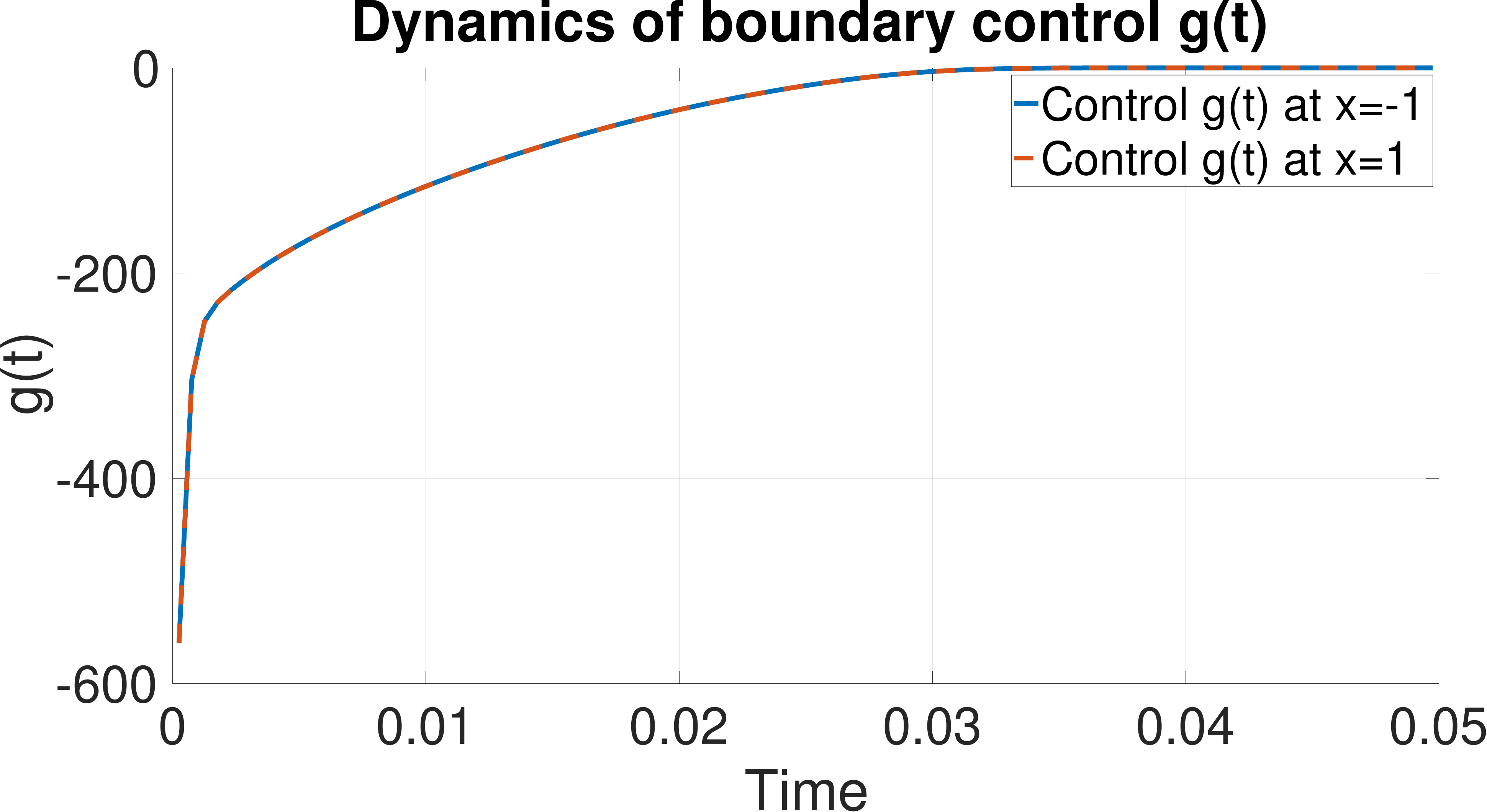}
\includegraphics[width=0.328\textwidth]{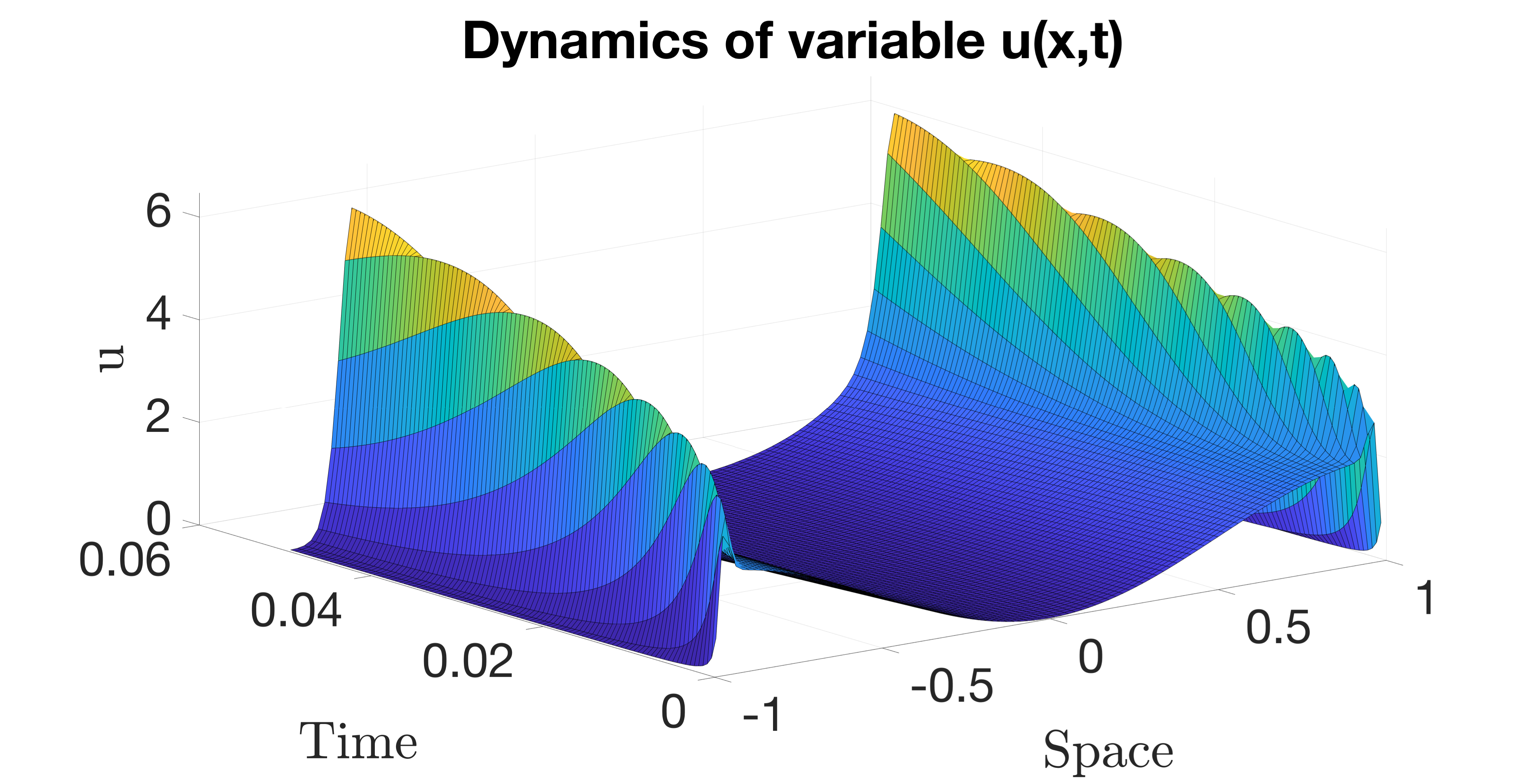}
\includegraphics[width=0.328\textwidth]{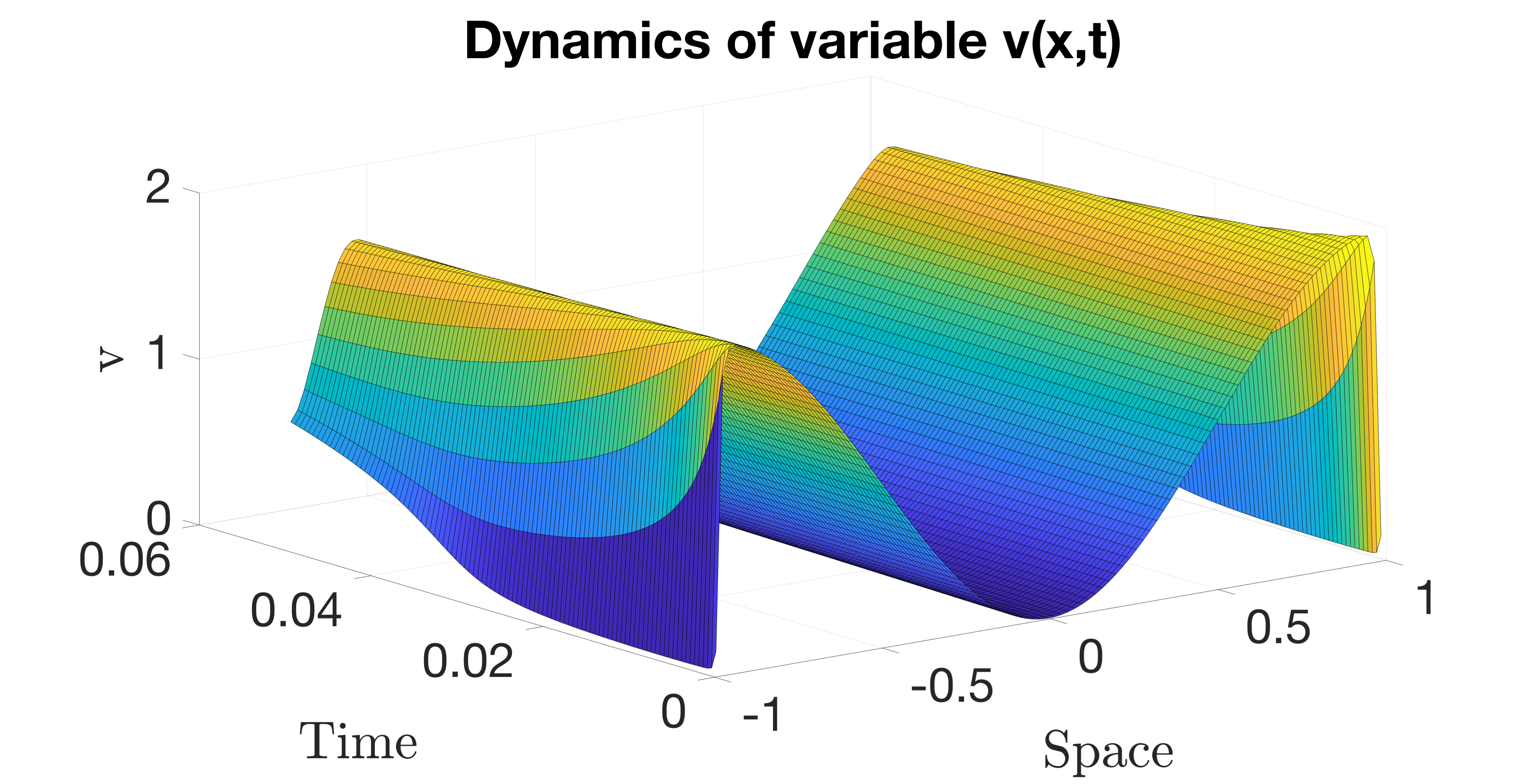}
\\
\includegraphics[width=0.328\textwidth]{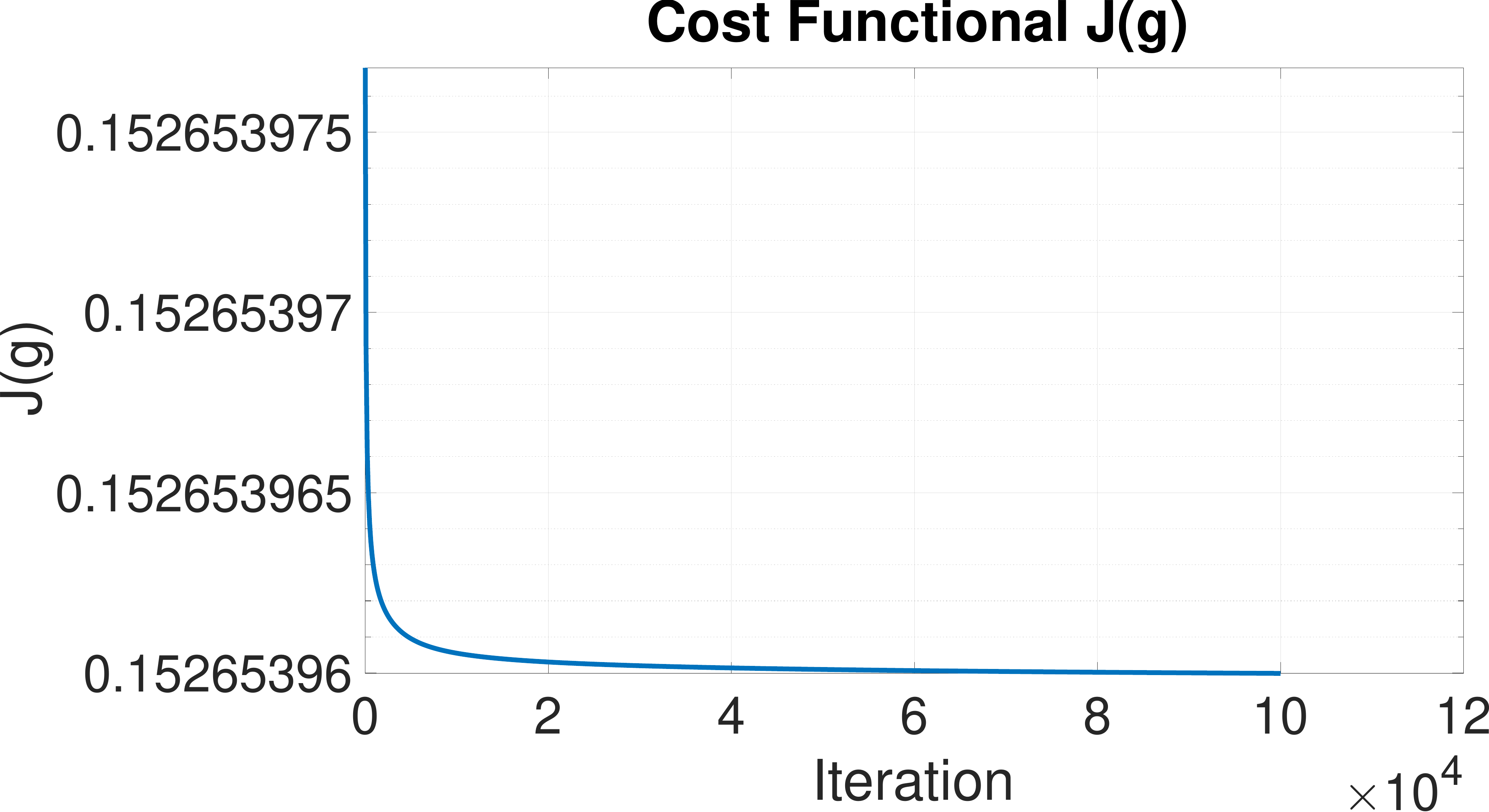}
\includegraphics[width=0.328\textwidth]{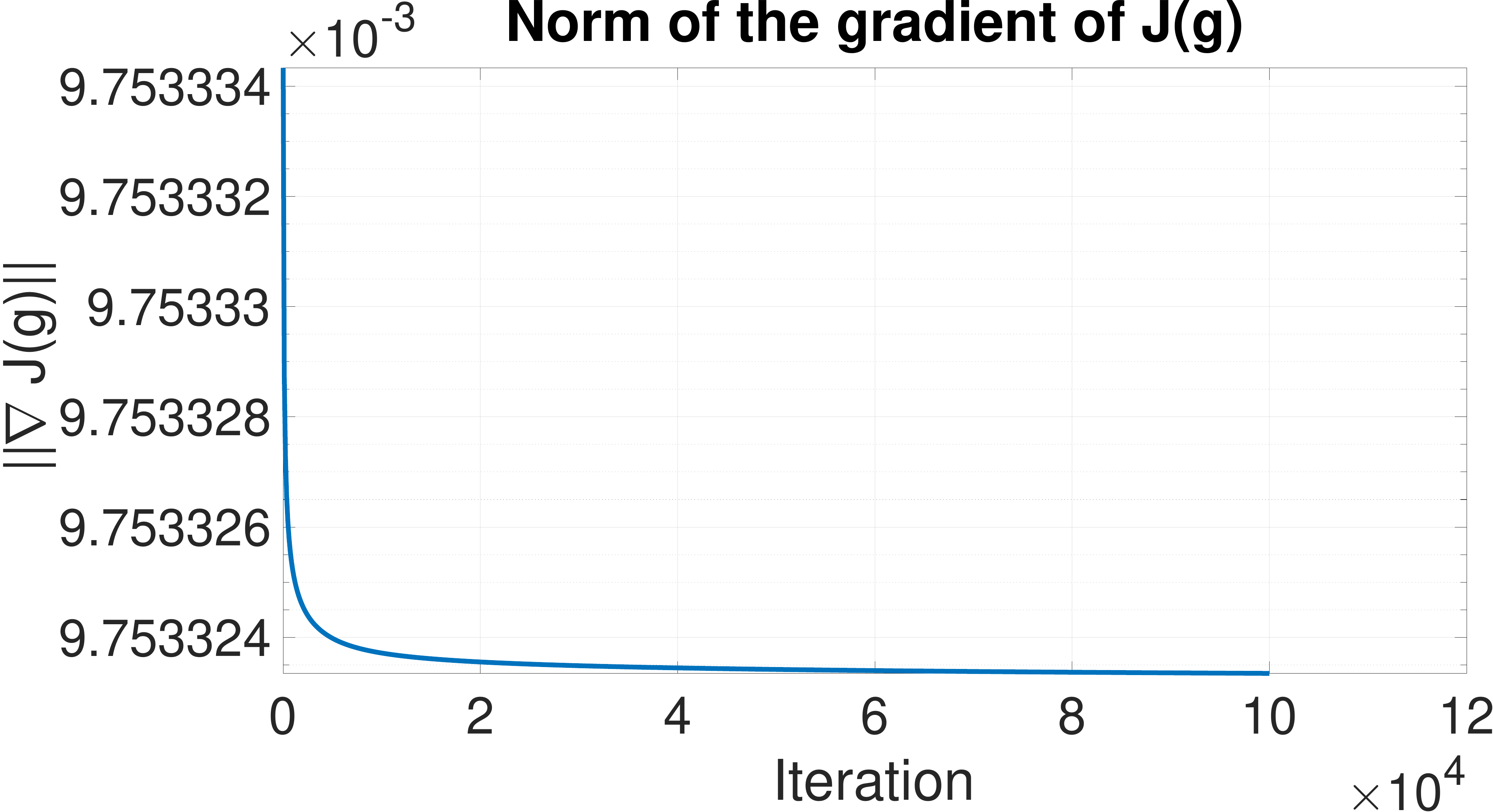}
\includegraphics[width=0.328\textwidth]{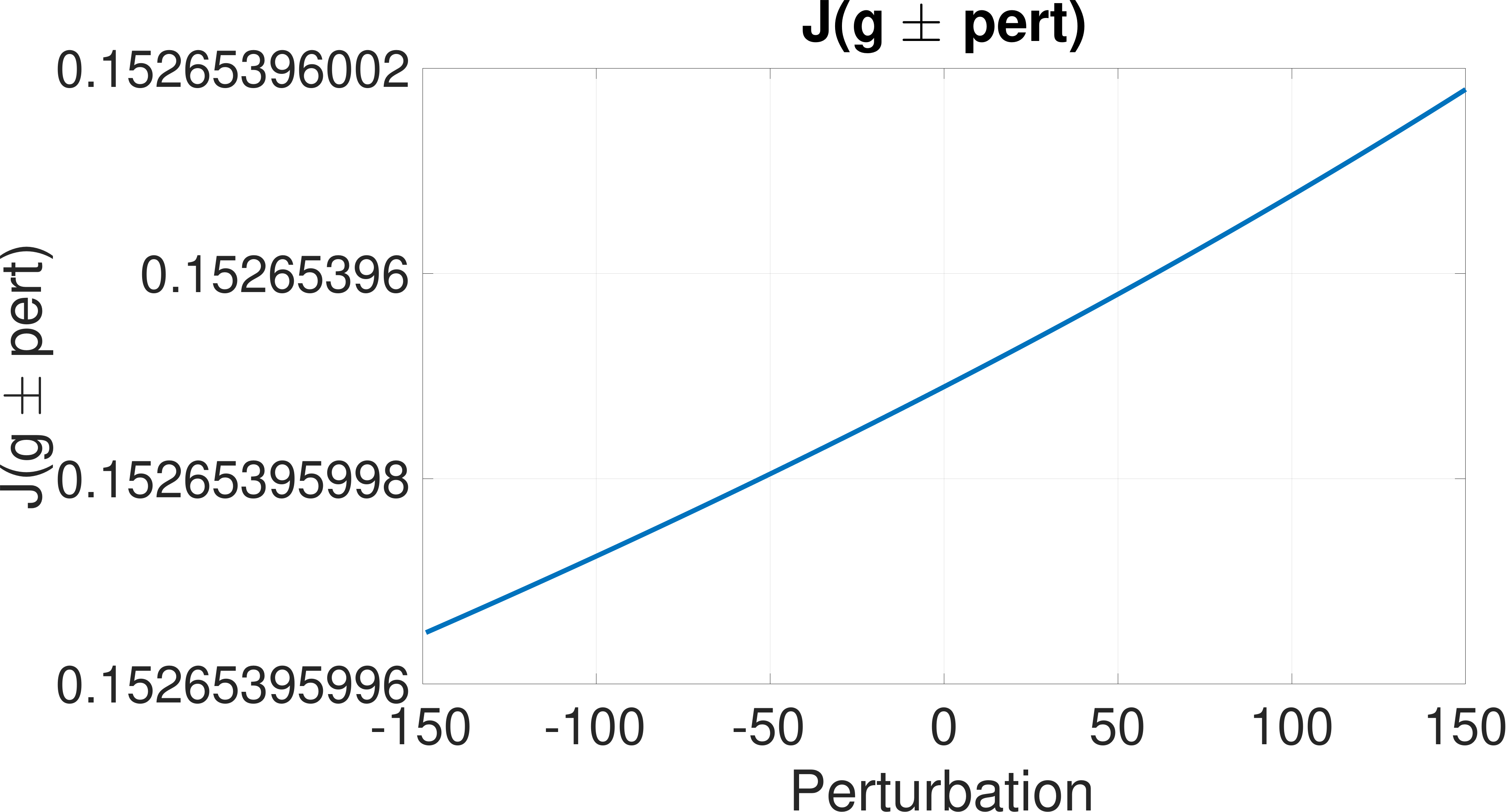}
\caption{Results for Bilinear boundary conditions with $\Omega_o=[-0.5,0.5]$. Top row: Dynamics of control $g$ (left) and the associated variables $u$ (center) and variable $v$ (right). Bottom row: Evolution of the cost functional $J(g)$ (left), evolution of $\|\nabla J(g)\|$ (center) and the influence of perturbing the obtained control (right).}\label{fig:BBCcase2}
\end{center}
\end{figure}

\subsubsection{Robin boundary control}

In this case we compare the effects on the choice of the observation domain when we consider the Robin boundary conditions presented in \eqref{eq:typesBoundaryControls} (taking the permeability parameter as $\sigma=1$). First, we present in Figure~\ref{fig:RBCcase1} the results observing in the whole domain, that is  $\Omega_o=[-1,1]$. In this case, the control on both sides  oscillates during the first half of the time interval (adding/removing signal $v$ hence oscillations on the values of $u$ close to the endpoints of the spatial domain are produced) while in the second part  the control gets to the quantity of signal on the boundary hence the flux vanishes. Anyway, as in the case of bilinear boundary conditions, the effect of the Robin boundary control is very limited. Moreover, the functional $J(g)$ decreases at the beginning of the computations but then it remains constant for the rest of the iterations, while the norm of its gradient just oscillates (at the final iteration $\|\nabla J(g)\|=0.041$). Finally, it seems that the system achieves a local minimum by how the functional behaves when evaluated at perturbations of the obtained control (adding control produces to go away of the minimum much faster than removing control).

Additionally, the results only observing in an interior subdomain, $\Omega_o=[-0.5,0.5]$, are presented in Figure~\ref{fig:RBCcase2}. Interestingly, in this case the computed control takes always the value zero, which produces the system to remove variable $v$ through the boundary during the whole time interval, producing changes on variable $u$ close to the boundary. Since control $g$ is constant (equal to zero) on both boundary points, then both the value of $J(g)$ and its gradient remains constant in all iterations. We also observe by perturbing the obtained control that the system have achieved a local minimum in this case.

Finally, comparing two different observation domains, we have observed that the 
Robin boundary control is more limited  when the observation is near to the boundary, because if the control acts it produces changes near to the boundary, which is an opposite effect with respect to the constant desired state.

\begin{figure}[H]
\begin{center}
\includegraphics[width=0.328\textwidth]{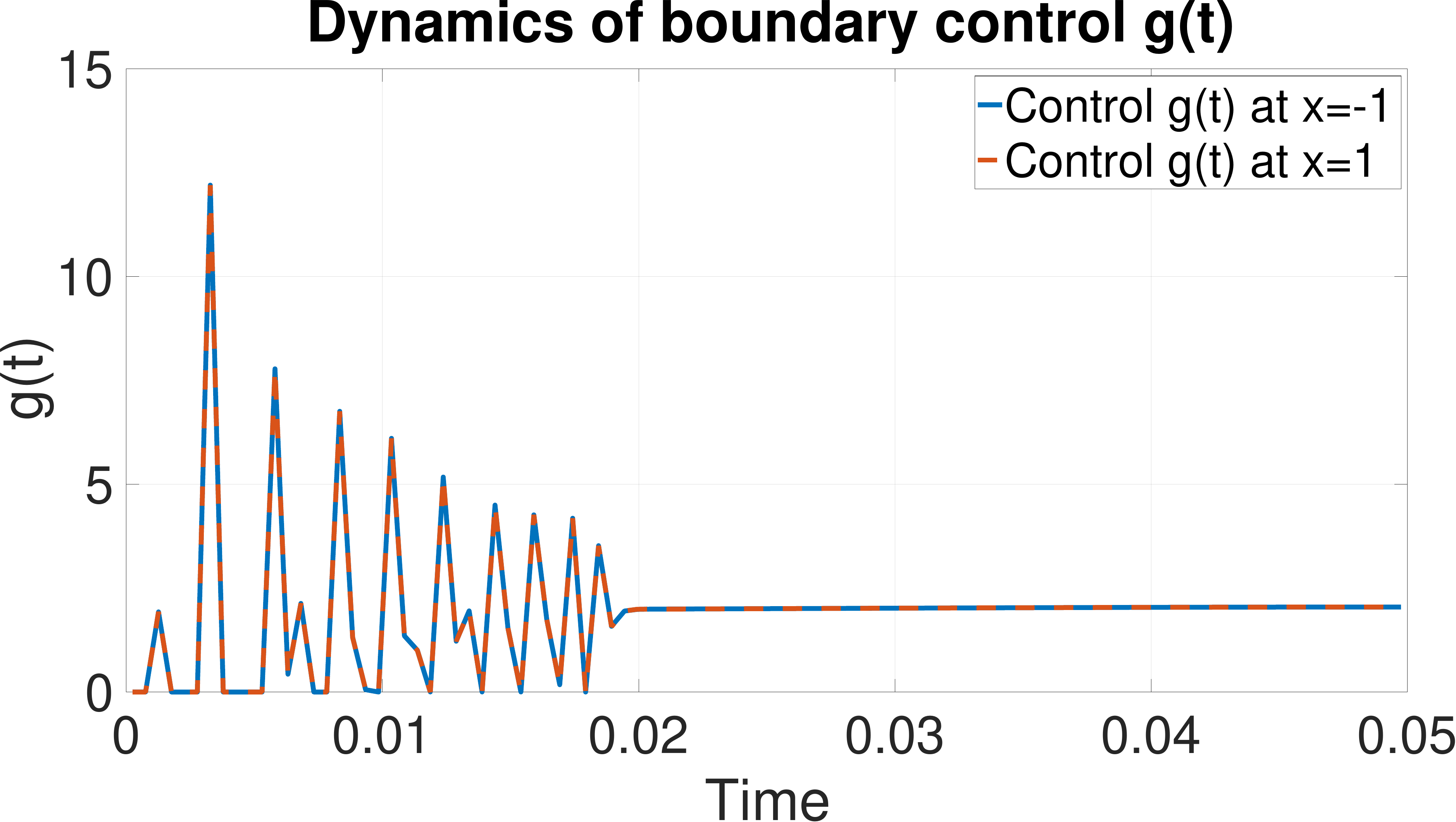}
\includegraphics[width=0.328\textwidth]{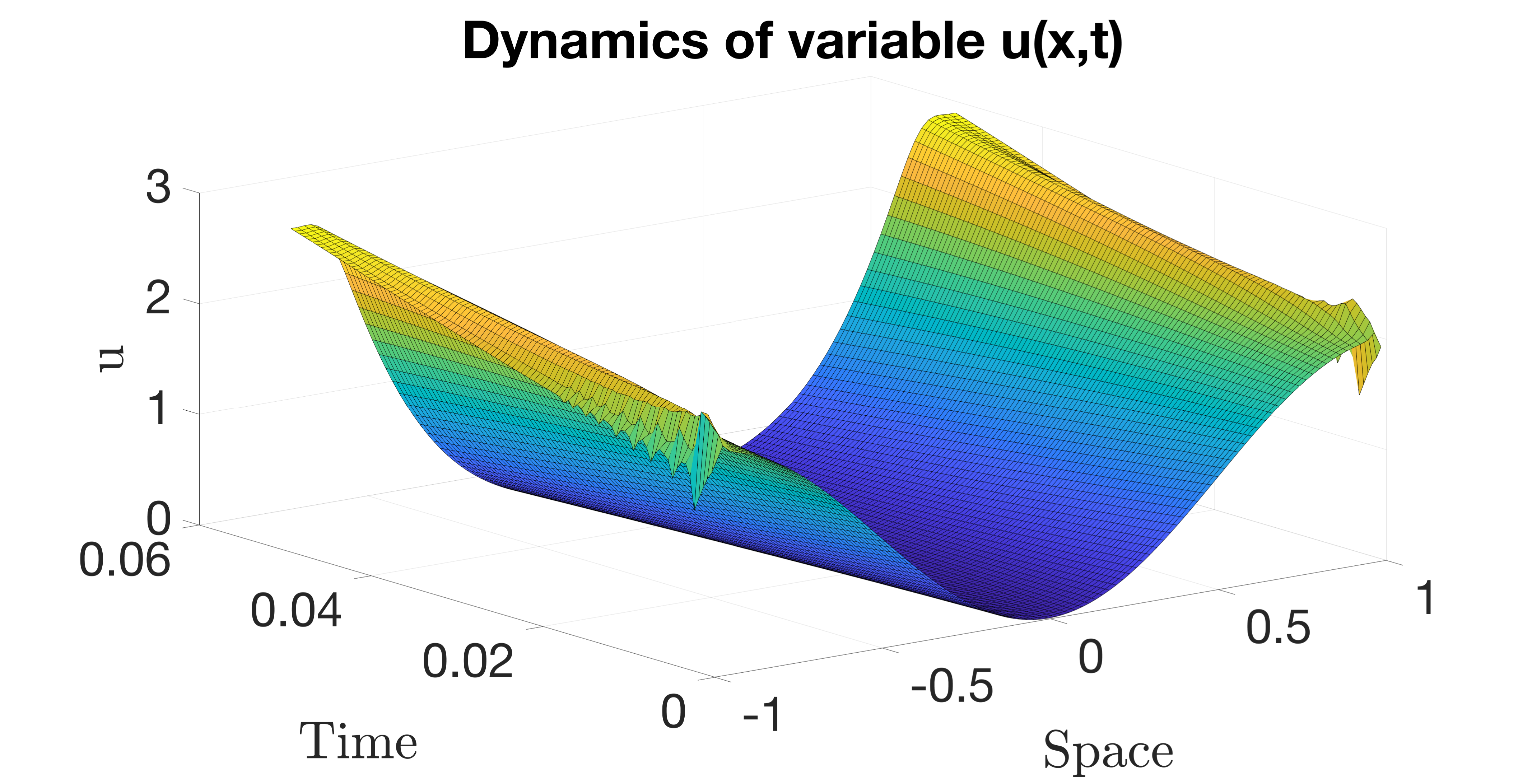}
\includegraphics[width=0.328\textwidth]{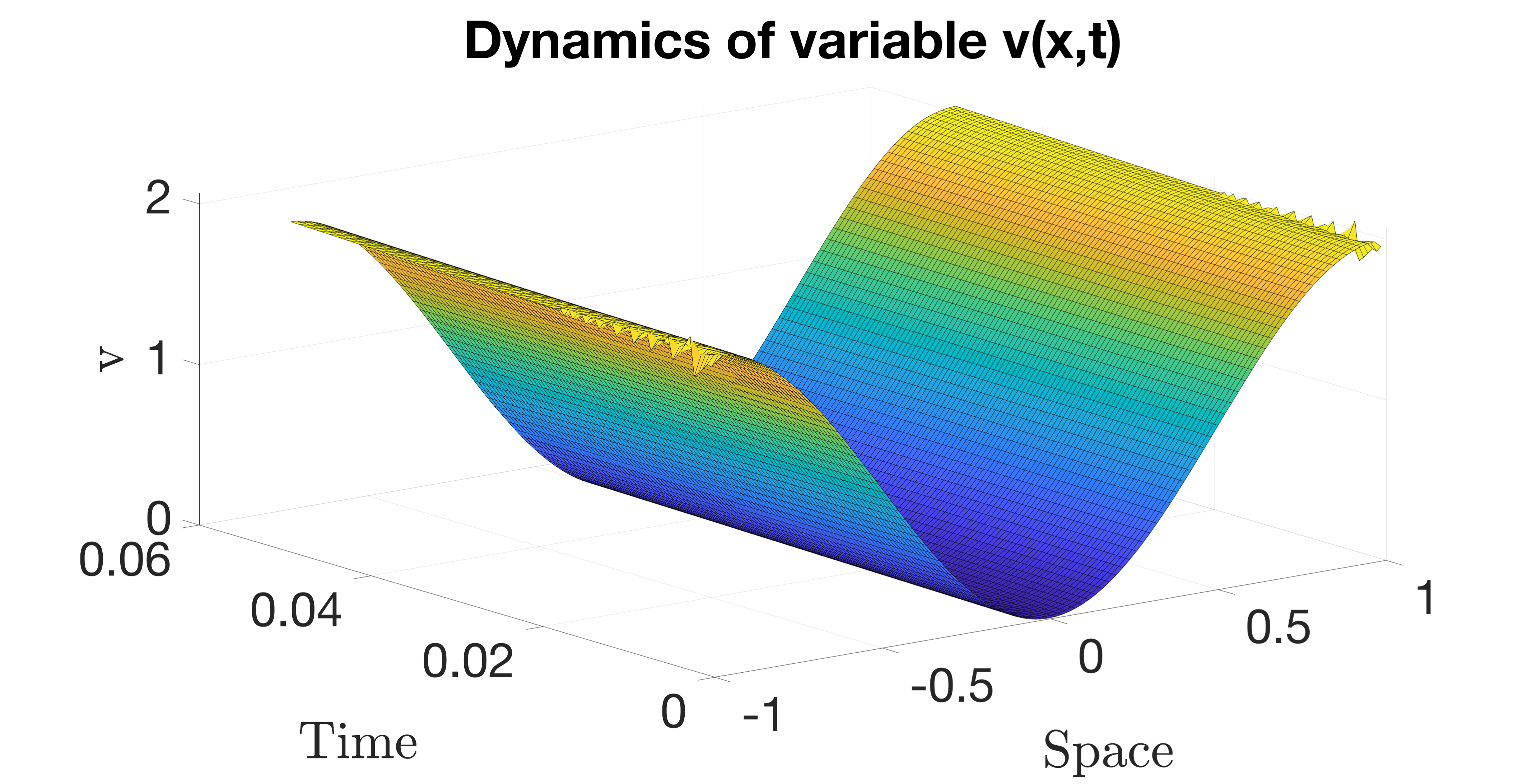}
\\
\includegraphics[width=0.328\textwidth]{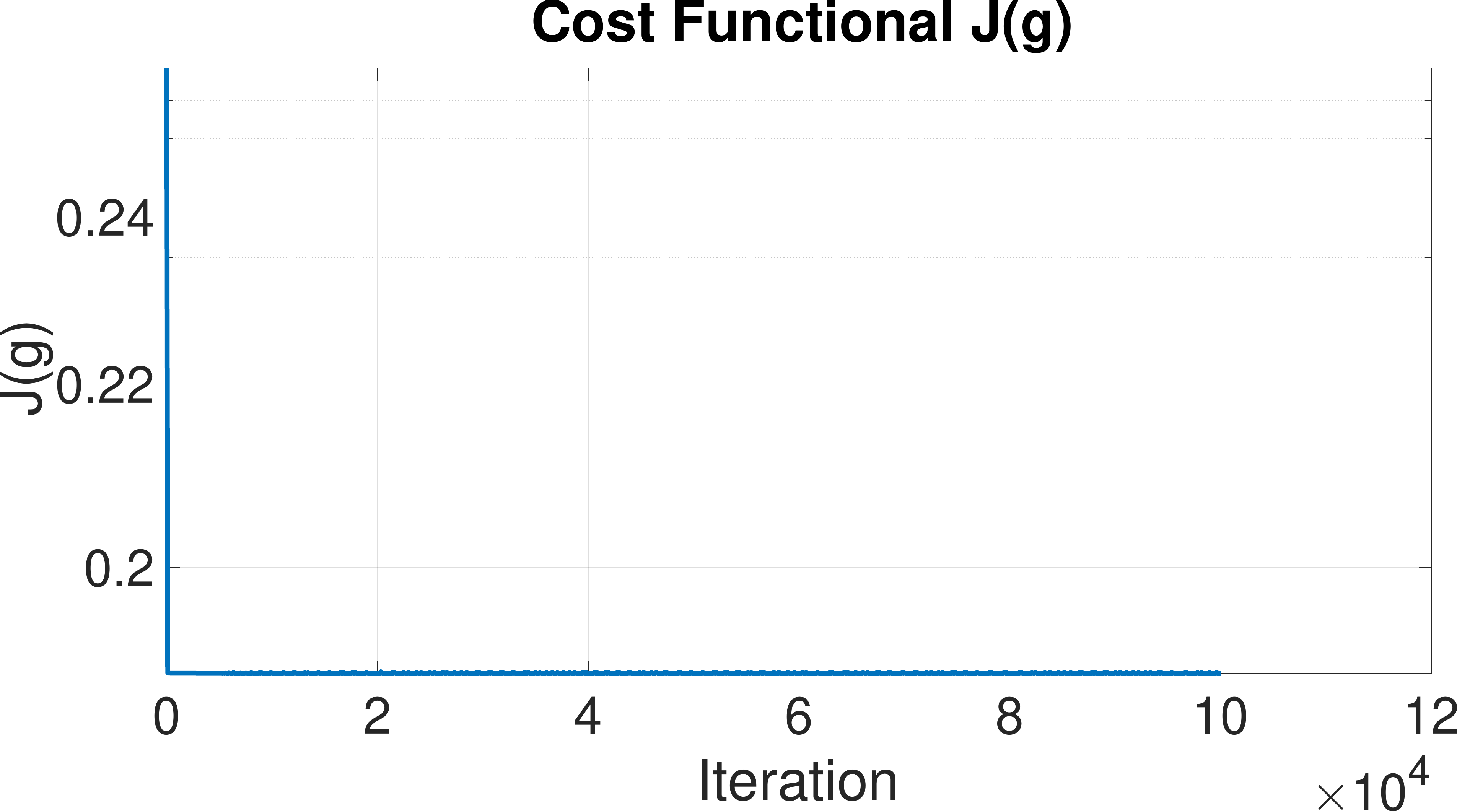}
\includegraphics[width=0.328\textwidth]{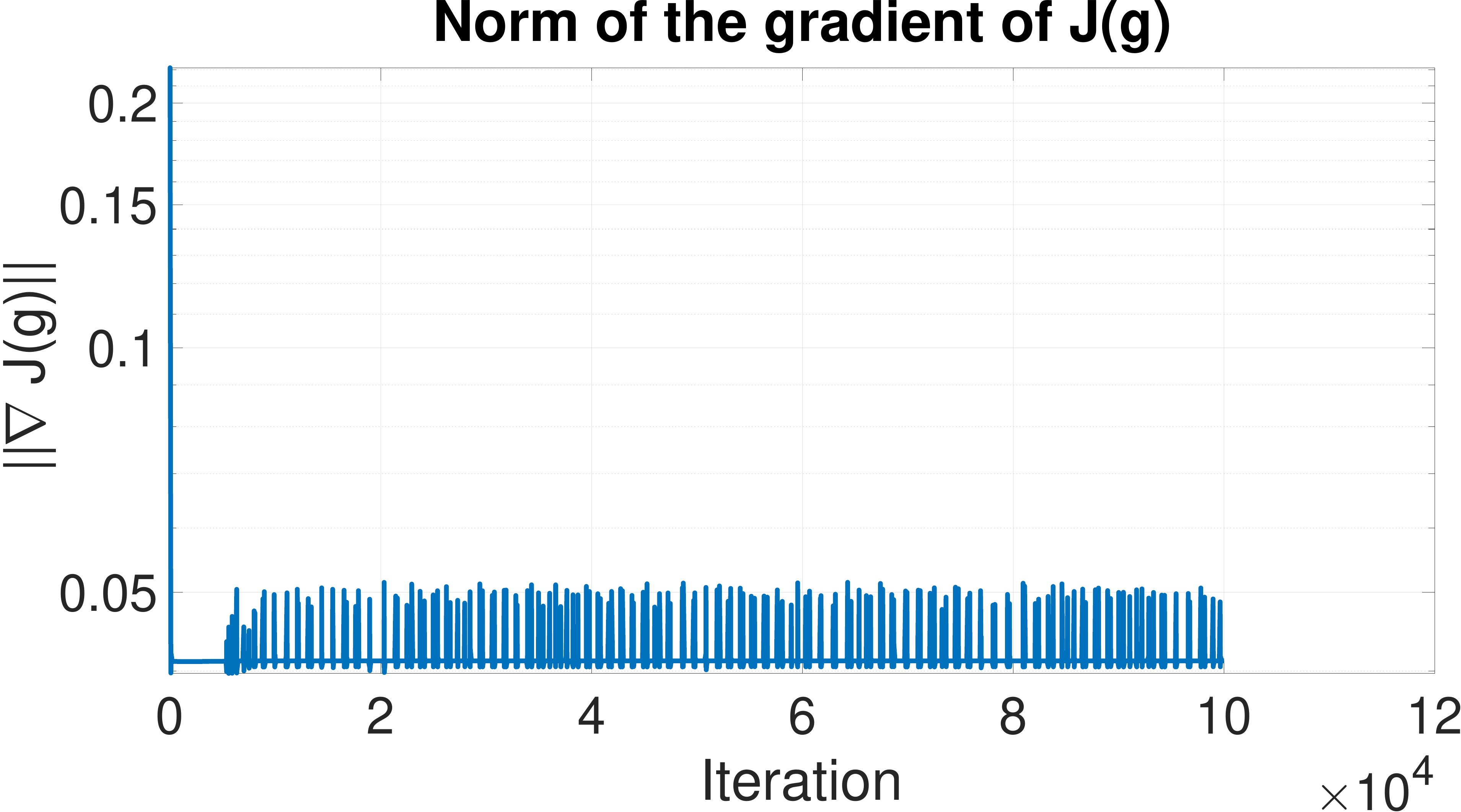}
\includegraphics[width=0.328\textwidth]{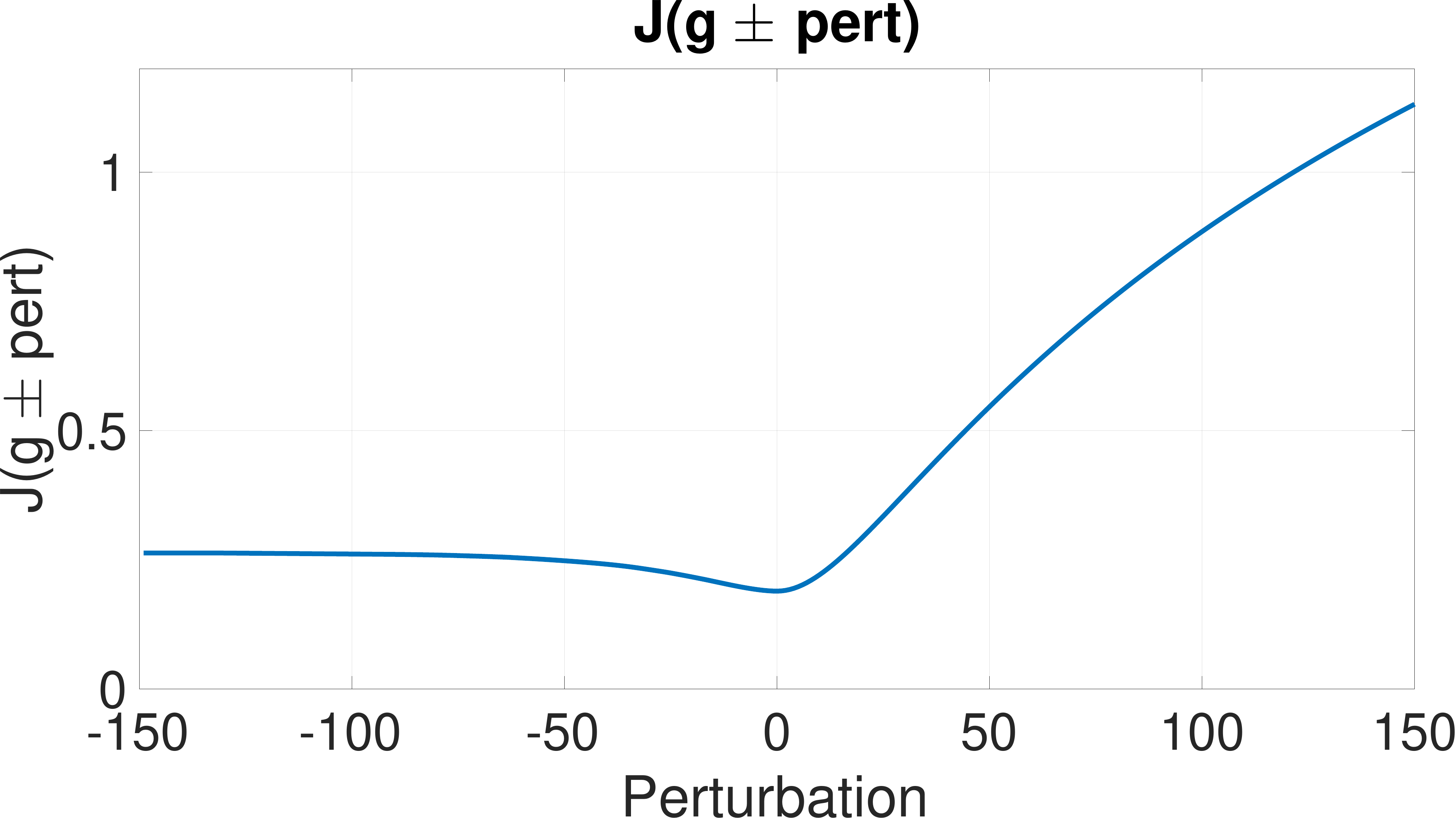}
\caption{Results for Robin boundary conditions with $\Omega_o=[-1,1]$. Top row: Dynamics of control $g$ (left) and the associated variables $u$ (center) and variable $v$ (right). Bottom row: Evolution of the cost functional $J(g)$ (left), evolution of $\|\nabla J(g)\|$ (center) and the influence of perturbing the obtained control (right).}\label{fig:RBCcase1}
\end{center}
\end{figure}

\begin{figure}[H]
\begin{center}
\includegraphics[width=0.328\textwidth]{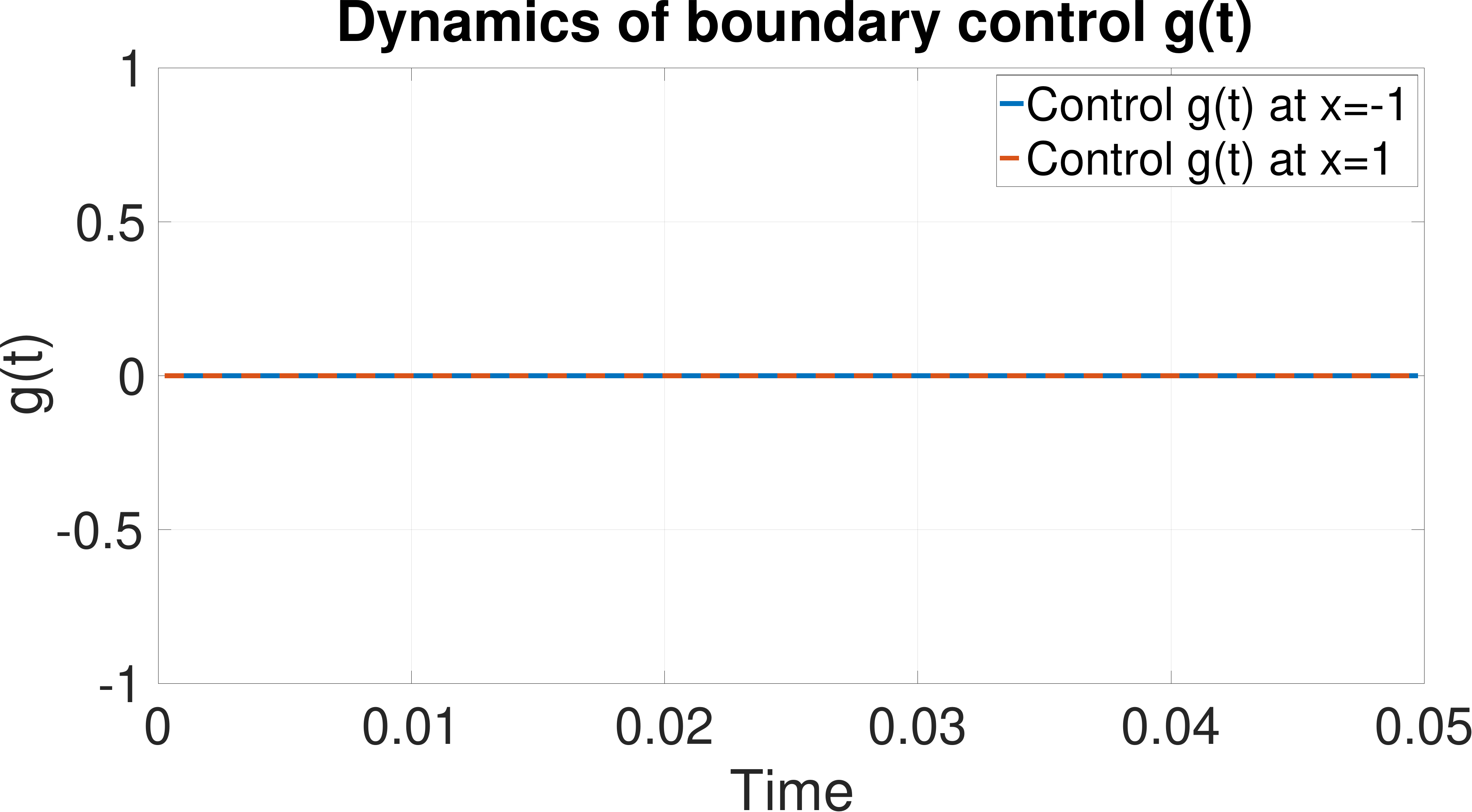}
\includegraphics[width=0.328\textwidth]{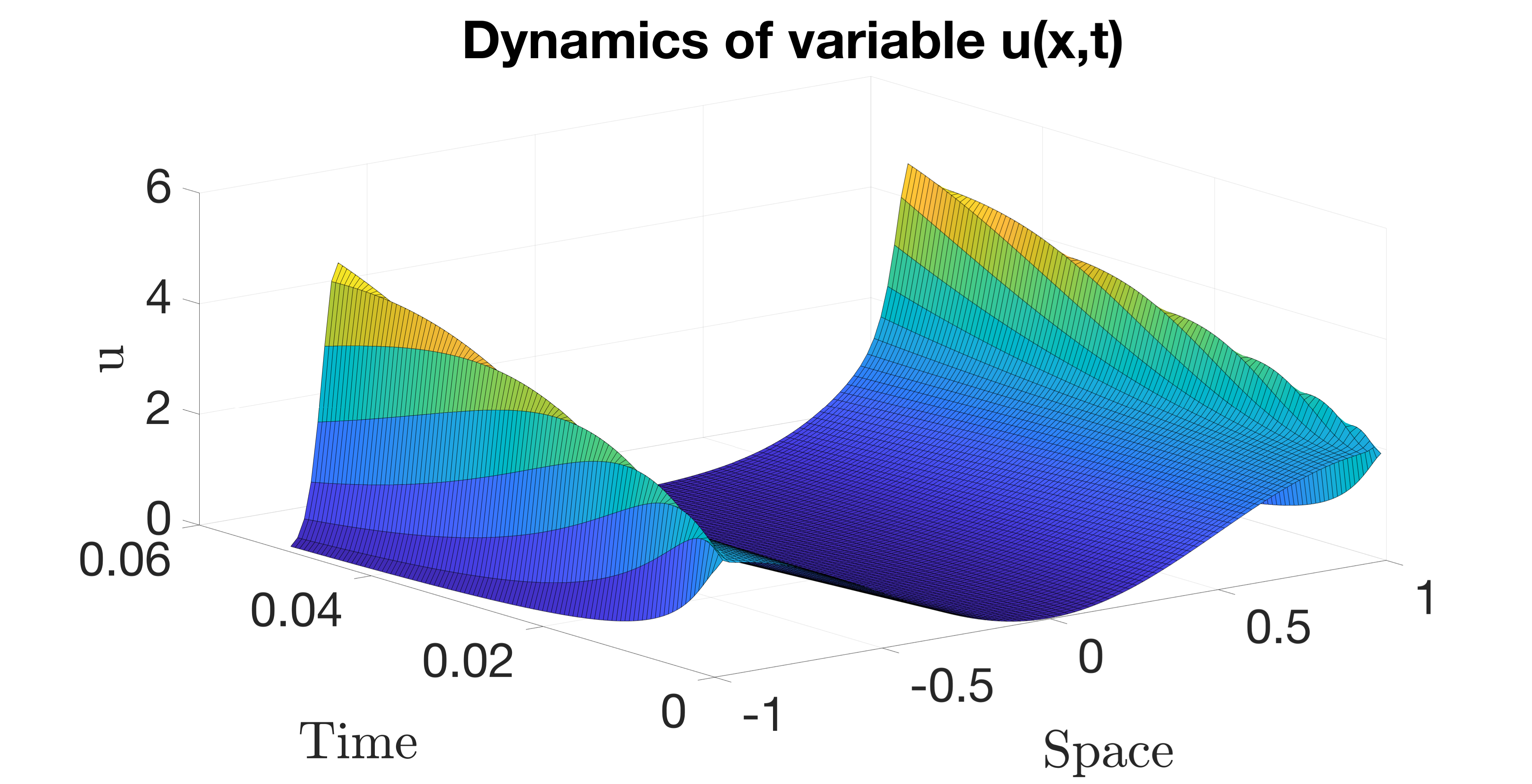}
\includegraphics[width=0.328\textwidth]{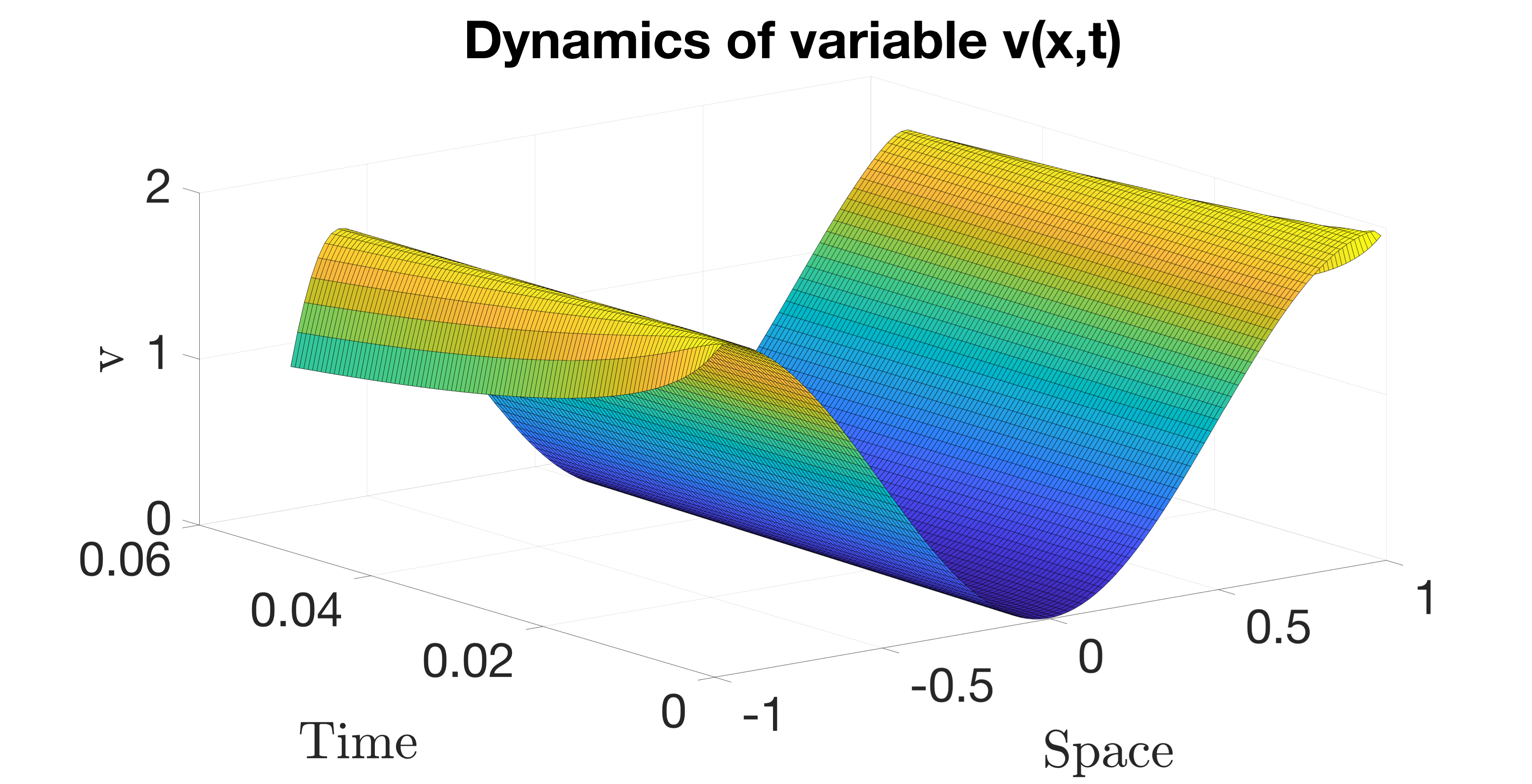}
\\
\includegraphics[width=0.328\textwidth]{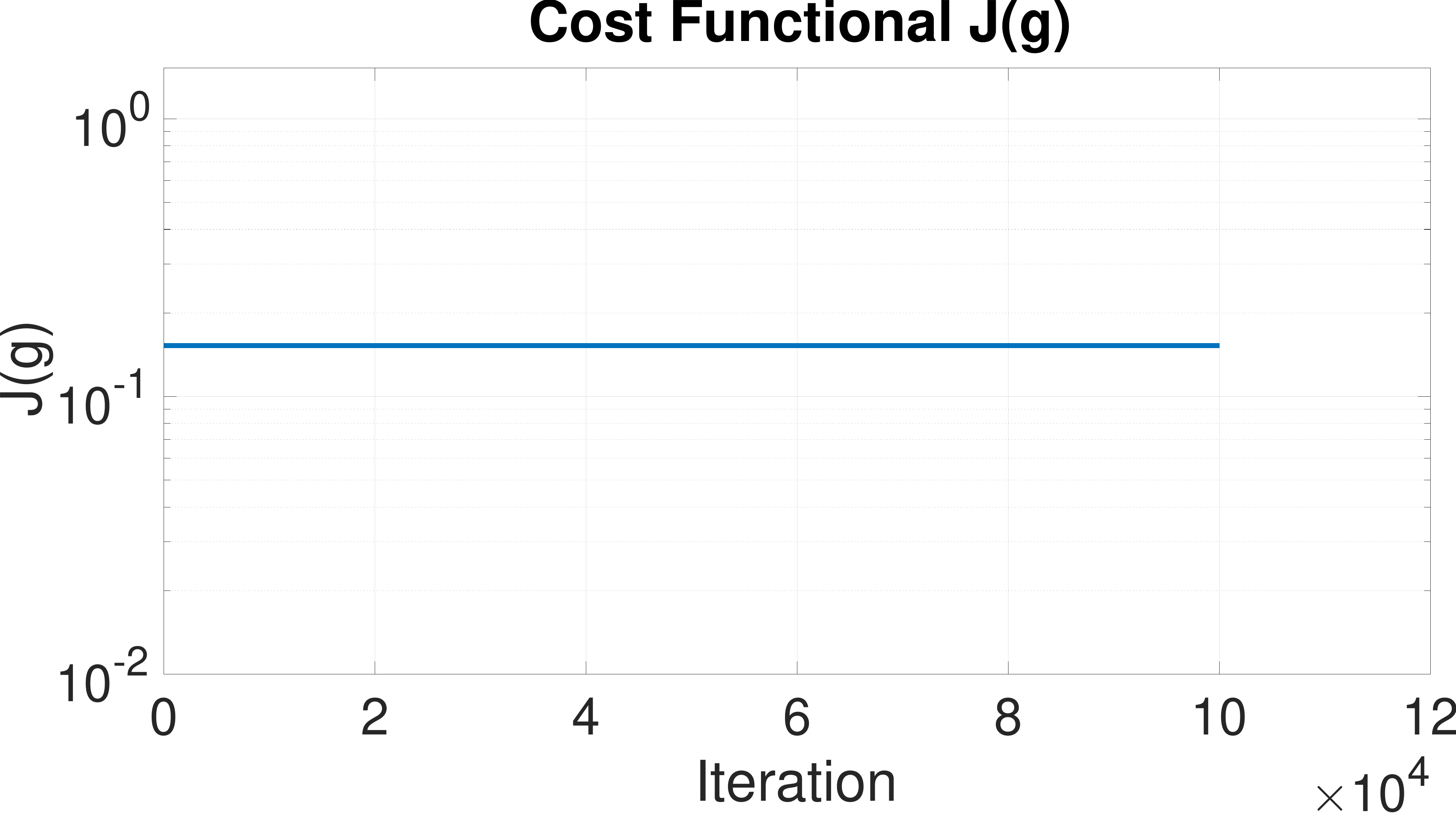}\includegraphics[width=0.328\textwidth]{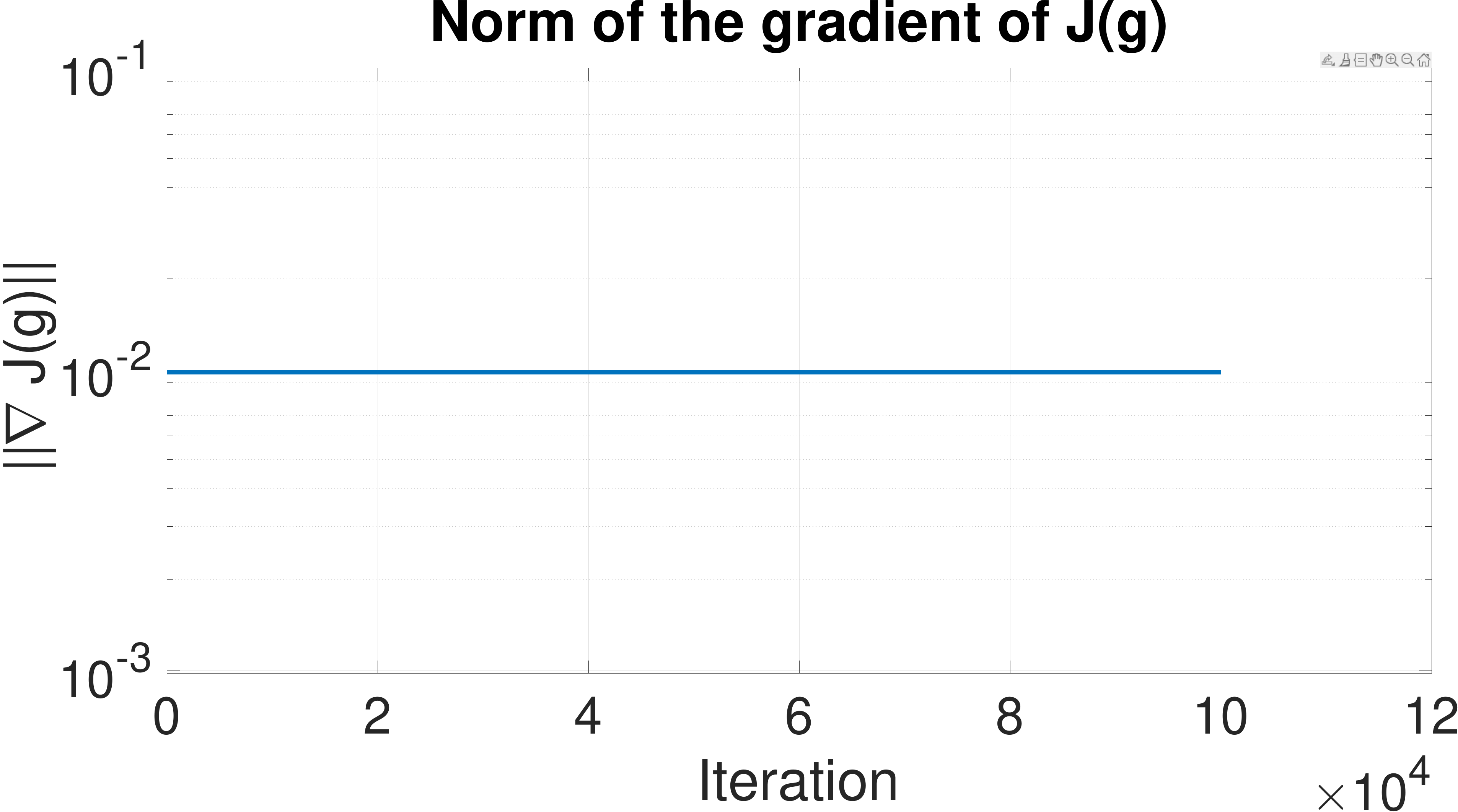}
\includegraphics[width=0.328\textwidth]{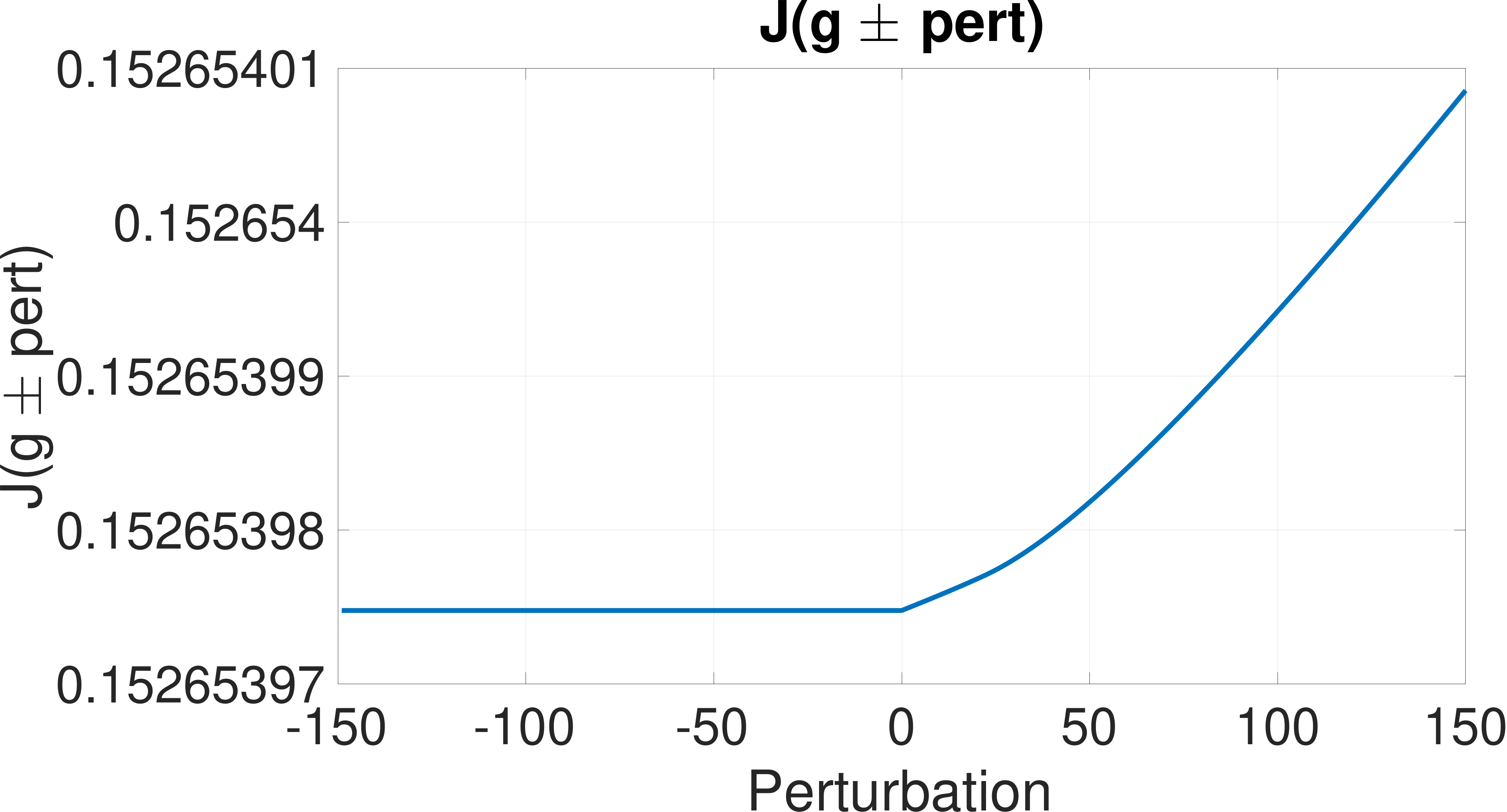}
\caption{Results for Robin boundary conditions with $\Omega_o=[-0.5,0.5]$. Top row: Dynamics of control $g$ (left) and the associated variables $u$ (center) and variable $v$ (right). Bottom row: Evolution of the cost functional $J(g)$ (left), evolution of $\|\nabla J(g)\|$ (center) and the influence of perturbing the obtained control (right).}\label{fig:RBCcase2}
\end{center}
\end{figure}

\section{Conclusions}\label{sec:conclusions}

In this work we have derived a new optimal control algorithm for the Keller-Segel chemo-attraction system based on the \textit{discretize-then-optimize} approach, that is, we first discretize the state problem system and then we develop the optimal control algorithm. This approach has the advantage of allowing us to compute exactly the discrete gradient of the reduced cost, which is used later on to construct the discrete optimal control. We have combined these ideas with the usage of the Adam scheme for the minimization process, which results in a very efficient algorithm. By performing simulations in a systematic way we have been able to verify the applicability of this new approach to complicated situations. Moreover, we have been able to get insights on the dynamics of this nonlinear problem, being the main conclusion that in the case of a bilinear distributed control, although the control is imposed on the concentration of the chemical ($v$-variable), this control is able to greatly affect the dynamics of the density of the live organisms ($u$-variable). On the other hand, the case of a boundary control in the $v$-variable doesn't seem to be able to affect much the dynamics of the $u$-variable, leading to the conclusion that the applicability in practice of this type of boundary controls 
seems to be very limited. In particular, we have observed that  both  choices  of 
 boundary control (bilinear and Robin) have more limited effect  when the observation is near to the boundary than when the observation is made in the interior.  This seems to be related with the fact that if the control acts, it produces changes near to the boundary, going away from  the constant desired state.

\section*{Acknowledgements}
F. Guill\'en-Gonz\'alez and M.A. Rodr\'iguez-Bellido have been partially supported by Grant  I+D+I PID2023-149182NB-I00 funded by MICIU/AEI/10.13039/501100011033 and, ERDF/EU. F. Guill\'en-Gonz\'alez and M.A. Rodr\'iguez-Bellido thank IMUS-Maria de Maeztu grant CEX2024-001517-M - Apoyo a Unidades de Excelencia Mar\'ia de Maeztu for supporting this research, funded by MICIU/AEI/ 10.13039/501100011033.

\section*{Conflicts of Interest}

The authors declare they have no known competing financial interests that could have appeared to influence the results reported in this work.


\end{document}